\definecolor{dullmagenta}{rgb}{0.4,0,0.4}
\definecolor{darkblue}{rgb}{0,0,0.4}
\newtheorem{theorem}{Theorem}[section] 
\newtheorem{lemma}[theorem]{Lemma}
\newtheorem{proposition}[theorem]{Proposition}
\newtheorem{corollary}[theorem]{Corollary}
\theoremstyle{definition}
\newtheorem{definition}[theorem]{Definition}
\newtheorem{example}[theorem]{Example}
\newtheorem{remark}[theorem]{Remark}
\begin{document}

\title[Ring extensions of length two]{Ring extensions of length two }

\author[G. Picavet and M. Picavet]{Gabriel Picavet and Martine Picavet-L'Hermitte}
\address{Math\'ematiques \\ 8 Rue du Forez\\ 63670 Le Cendre   \\ France}

\email{Gabriel.Picavet@math.univ-bpclermont.fr}
\email{picavet.mathu(at)orange.fr}

\begin{abstract} We characterize extensions of commutative rings $R\subset S$ such that $R\subset T$ is minimal for each $R$-subalgebra $T$ of $S$ with $T\neq R,S$. This property is equivalent to $R\subset S$ has length 2. Such extensions are either pointwise minimal or simple. We are able to compute the number of subextensions of $R\subset S$. Besides commutative algebra considerations, our main result is a consequence of the recently introduced by van Hoeij et al. concept of principal subfields of a finite separable field extension.    As a corollary of this paper, we  get that simple extensions  of length 2 have  FIP.

\end{abstract}

\subjclass[2010]{Primary:13B02,13B21, 13B22, 12F10;  Secondary: 13B30}

\keywords  {FIP, FCP extension, minimal extension, length of an extension, integral extension, support of a module, t-closure, algebraic field extension,  separable field extension, principal subfield, pointwise minimal extension}

\maketitle

 \section{Introduction and Notation}
 
 This paper has twin objectives. One of them is to answer a question on length 2 ring extensions, raised when writing our   earlier joint paper with P.-J. Cahen   on pointwise minimal extensions  \cite{Pic 7}. Indeed co-pointwise minimal extensions have length 2. Are there other extensions of length 2? The other  objective is to study towers of two minimal ring extensions. Dobbs and Shapiro already  considered them 
 without the above length condition \cite{DS}. Our methods are completely different and lead to a characterization which allows us to compute cardinalities of sets of intermediate extensions and then to answer a question addressed by these authors. As a deep consequence, we show that length 2 simple extensions have FIP. The terminology is explained in the next paragraphs  and  in Section 4 for t-closedness. 

We consider the category of commutative and unital rings. Epimorphisms are those of this category. 
Let $R\subseteq S$ be a (ring) extension. Its conductor is denoted by $(R:S)$ and the set of all $R$-subalgebras of $S$ by $[R,S]$. We set $]R,S[: =[R,S]\setminus \{R,S\}$ (with a similar definition for $[R,S[$ or $]R,S]$). Moreover,  $\overline R$ is the integral closure of $R$ in $S$.  Any writing $[R,S]$ supposes that there is an extension $R\subseteq S$. 

The extension $R\subseteq S$ is said to have FIP (or is called an FIP extension) (for the ``finitely many intermediate algebras property") if $[R,S]$ is finite. A {\it chain} of $R$-subalgebras of $S$ is a set of elements of $[R,S]$ that are pairwise comparable with respect to inclusion. We say that an  extension $R\subseteq S$ has FCP (or is called an FCP extension)  (for the ``finite chain property") if each chain in $[R,S]$ is finite.  An  extension $R\subseteq S$  is called an FMC extension   if there exists a finite maximal chain from $R$ to $S$. D. Dobbs and the authors characterized FCP and FIP extensions \cite{DPP2}. Our main tool will be the minimal (ring) extensions, a concept that was introduced by Ferrand-Olivier \cite{FO}. Recall that an extension $R\subset S$ is called {\it minimal} if $[R, S]=\{R,S\}$. The key connection between the above ideas is that if $R\subseteq S$ has FCP, then any maximal (necessarily finite) chain of $R$-subalgebras of $S$, $R=R_0\subset R_1\subset\cdots\subset R_{n-1}\subset R_n=S$, with {\it length} $n <\infty$, results from juxtaposing $n$ minimal extensions $R_i\subset R_{i+1},\ 0\leq i\leq n-1$. For any extension $R\subseteq S$, the {\it length} of $[R,S]$, denoted by $\ell[R,S]$, is the supremum of the lengths of chains of $R$-subalgebras of $S$. It should be noted that if $R\subseteq S$ has FCP, then there {\it does} exist some maximal chain of $R$-subalgebras of $S$ with length $\ell[R,S]$ \cite[Theorem 4.11]{DPP3}.

In an earlier paper \cite{Pic 7}, P-J. Cahen and the authors  characterized pointwise minimal extensions, a concept introduced by P.-J. Cahen, D. Dobbs and T. Lucas  \cite{CDL}.  An extension $R\subset S$ is called a {\it simple (or monogenic) extension} if $S=R[t]$ for some $t\in S$ and is called a {\it pointwise minimal extension} if $R\subset R[t]$ is  minimal  for each $t\in S\setminus R$, whereas it  is called a {\it co-pointwise minimal extension}  if $R[x]\subset S$ is  minimal  for each $x\in S\setminus R$  \cite{Pic 7}. In particular, $R\subset S$ is a co-pointwise minimal extension $\Rightarrow R\subset S$ is a pointwise minimal extension.

In the present work, we study a notion connected to the previous contexts.  We will temporarily  call an extension $R\subset S$   a  {\it  minimal pair}  (resp. a {\it  co-minimal pair}) if  each extension $R\subset T$ (resp. $T\subset S$)  is minimal  for $T\in ]R,S[$.  
We show in Section 2 that such extensions coincide with length 2 extensions and they are either pointwise minimal or simple (Proposition~\ref{2.2}). Dobbs and  Shapiro considered a close situation, namely extensions that are a tower of two minimal extensions. Their main  theorem \cite[Theorem 4.1]{DS} characterizes such extensions having FIP, exhibiting 13 mutually exclusive cases. We recall them in (Theorem ~\ref{6.11}). Their extensions are not necessarily of length 2  and worse:  they may have an infinite length. The present paper is written with a point of view different from Dobbs-Shapiro's. By the way, we answer two questions raised in \cite[Remark 2.11 (a)]{D1} by  Dobbs. As usual, Spec$(R)$ and Max$(R)$ are the set of prime and maximal ideals of a ring $R$, and Rad$(R)$ is the (Jacobson) radical of $R$. We recall that the support of an $R$-module $E$ is $\mathrm{Supp}_R(E):=\{P\in\mathrm{Spec}(R)\mid E_P\neq 0\}$, and $\mathrm{MSupp}_R(E):=\mathrm{Supp}_R(E)\cap\mathrm{Max}(R)$. We say that $R\subset S$ is {\it locally minimal} if $R_M\subset S_M$ is minimal for each $M\in\mathrm{Supp}_R(S/R)$. 
Next notions and results are  involved in our study. Recall  that an extension $R\subseteq S$ is called {\it Pr\"ufer} if $R\subseteq T$ is a flat epimorphism  for each $T\in [R,S]$ \cite{KZ}. An extension   $R\subseteq S$ is classically called a {\it normal} pair if $T\subseteq S$ is integrally closed for each $T\in [R,S]$.  Then  $R\subseteq S$ is Pr\"ufer if and only if it is a normal pair \cite[Theorem 5.2(4)]{KZ}. In \cite{Pic 5}, we observed that an extension is a minimal flat epimorphism  if and only if it is  Pr\"ufer and minimal. In this paper, we also call an extension $R\subseteq S$ {\it quasi-Pr\"ufer } if $\overline R\subseteq S$ is a Pr\"ufer extension \cite{Pic 5}.   An FMC extension is quasi-Pr\"ufer. From now on, we use these terminologies.

A first result is that a ring extension $R\subset S$ of length 2 is quasi-Pr\"ufer and $|\mathrm{Supp}_R(S/R)|\leq 2$ (Proposition~\ref{2.3}). In Section 3, we characterize extensions $R\subset S$ of length 2, such   that $|\mathrm{Supp}_R(S/R)|= 2$. These extensions are simple. In Section 4, we characterize extensions $R\subset S$ of length 2,  such that $|\mathrm{Supp}_R(S/R)|= 1$. The quasi-Pr\"ufer property of these extensions induces a first characterization where the integral closure is involved. For integral extensions, seminormalizations and  t-closures are also involved. 

Examples attest that all cases of our exhaustive classification occur. For  a length 2 t-closed extension $R\subset S$, such that $\mathrm{Supp}_R(S/R)=\{M\}$, we get that $M=(R:S)$. This allows us to reduce our study to the field extension $R/M\subset S/M$. Finite separable field extensions are surprisingly difficult to handle and need the whole Section 5. To get conditions in order that a field extension $k\subset L$ has length 2, we need a tight study of the $k$-subalgebras of $L$. We use the noteworthy notion of principal subfields introduced by van Hoeij, Kl\"uners and Novocin  \cite{HKN}. Here again, several examples show that  various situations may occur. Section 6 collects in the preceding sections, 11 mutually exclusive and comprehensive conditions that  extensions $R \subset S$ do verify to have length 2. For each of them, we give the cardinality of $[R,S]$. Our classification differs from Dobbs-Shapiro's  as the reader may see with the help of a comparative table.

A {\it local} ring is here what is called elsewhere a quasi-local ring.  The characteristic of an integral domain $k$ is denoted by $\mathrm{c}(k)$. If $E$ is an $R$-module, ${\mathrm L}_R(E)$ is its length. If $R\subseteq S$ is a ring extension and $P\in\mathrm{Spec}(R)$, then $S_P$ is both the localization $S_{R\setminus P}$ as a ring and the localization at $P$ of the $R$-module $S$.
Finally, $\subset$ denotes proper inclusion, $|X|$ the cardinality of a set $X$, for a positive integer $n$, we set $\mathbb{N}_n:=\{1,\ldots,n\}$  and $\mathbb{P}$ is the set of all prime numbers. In a ring $R$, for $a,b,c\in R$ such that $c$ divides $a-b$, we write $a\equiv b\ (c)$.

\begin{definition}\label{1.1}  Let $R\subseteq S$ be a ring extension and $M\in\mathrm{Spec}(R)$. We say that $M$ is the {\it crucial  ideal} $\mathcal{C}(R,S)$ of the extension if $\mathrm{Supp}_R(S/R)=\{M\}$. Such an extension is called $M$-{\it crucial}. A crucial ideal needs to be maximal because a support is stable under specialization.
\end{definition}

 \begin{proposition}\label{1.11}  Let $R \subset S$ be an extension, with conductor $C$. The following statements hold:
 
 (1) If $R\subset S$ is $M$-crucial, then $C \subseteq M$.
 
 (2)  If  $R\subset S$ is integral, then $R\subset S$  has a crucial  ideal if and only if $\sqrt C \in \mathrm{Max}(R)$,  and then  $\mathcal C (R,S)= \sqrt C$.
\end{proposition}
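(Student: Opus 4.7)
For (1), the plan is direct: by definition $C=(R:S)$ equals $\mathrm{Ann}_R(S/R)$, and for any $R$-module $E$ one has $\mathrm{Supp}_R(E) \subseteq V(\mathrm{Ann}_R(E))$ (because if $e\in E$ is nonzero in $E_P$, then $\mathrm{Ann}_R(e)\subseteq P$, and $\mathrm{Ann}_R(E)\subseteq\mathrm{Ann}_R(e)$). Applying this to $E=S/R$ yields $\{M\}=\mathrm{Supp}_R(S/R)\subseteq V(C)$, hence $C\subseteq M$ at once.

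For (2), the ``$\Leftarrow$'' direction is formal and does not even need integrality. Assume $\sqrt{C}=N\in\mathrm{Max}(R)$. Since $N$ is maximal, $V(C)=V(\sqrt{C})=V(N)=\{N\}$. Combined with the inclusion from (1), $\mathrm{Supp}_R(S/R)\subseteq V(C)=\{N\}$; since $R\subsetneq S$ forces $S/R$ to be nonzero with nonempty support, we conclude $\mathrm{Supp}_R(S/R)=\{N\}$, exhibiting $N$ as the crucial ideal.

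The ``$\Rightarrow$'' direction is where integrality really enters. Assuming $R\subset S$ is $M$-crucial and integral, (1) gives $\sqrt{C}\subseteq M$, and the goal is to prove $V(C)=\{M\}$ (which will yield $\sqrt{C}=M=\mathcal{C}(R,S)$ at the same time). My plan is to strengthen the inclusion of (1) to the equality $V(C)=\mathrm{Supp}_R(S/R)$ under the integrality hypothesis. For the nontrivial inclusion $V(C)\subseteq\mathrm{Supp}_R(S/R)$, I pick $P\supseteq C$ and suppose for contradiction that $R_P=S_P$. Integrality ensures that each $R[s]\subseteq S$ is a finitely generated $R$-module, so $\mathrm{Supp}_R(R[s]/R)=V((R:R[s]))$ by the standard identity for finitely generated modules; from $(R[s]/R)_P=0$ we then obtain $(R:R[s])\not\subseteq P$ for every $s\in S$. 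Lying Over supplies a prime $Q$ of $S$ with $Q\cap R=P$ and automatically $C\subseteq Q$, and the plan is to patch these pieces into an element of $C$ outside $P$, contradicting $C\subseteq P$.

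The main technical obstacle is precisely this last patching step: the global conductor is the possibly infinite intersection $C=\bigcap_{s\in S}(R:R[s])$, and an infinite intersection of ideals each escaping $P$ can in principle collapse into $P$. The argument is clean when $S$ is a finitely generated $R$-module (so that $S/R$ is itself finitely generated and $\mathrm{Supp}_R(S/R)=V(C)$ holds directly), which is the effective setting supplied by the length-two integral extensions the paper is about; the general integral case demands a more delicate combination of Lying Over and the local-ring structure of $S_P=R_P$.
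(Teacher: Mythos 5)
Your proof of (1) is correct and is essentially the paper's (the paper phrases it contrapositively: an element of $C\setminus M$ would force $R_M=S_M$). Your proof of the implication $\sqrt{C}\in\mathrm{Max}(R)\Rightarrow{}$existence of a crucial ideal is also correct, and your remark that integrality is not needed there is accurate. The genuine gap is the forward implication of (2), which you explicitly leave open. The paper's route differs from your ``Lying Over plus patching'' plan: it writes $S$ as the directed union of its finite subextensions $R_\alpha$ (module-finite over $R$ precisely because the extension is integral), notes that $M$ is crucial for $R\subset S$ if and only if it is crucial for every $R\subset R_\alpha$ with $R_\alpha\neq R$ (since $\mathrm{Supp}(S/R)=\bigcup_\alpha\mathrm{Supp}(R_\alpha/R)$), and then invokes $\mathrm{Supp}(R_\alpha/R)=\mathrm{V}(C_\alpha)$ together with $C=\bigcap_\alpha C_\alpha$. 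This reduction replaces your problematic pointwise ideals $(R:R[s])$ by conductors of honest module-finite subextensions and is the step you were missing.

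That said, your instinct that the infinite intersection is the real obstruction is sound, and it applies to the paper's final step as well: knowing $\sqrt{C_\alpha}=M$ for every $\alpha$ only yields $x^{n_\alpha}\in C_\alpha$ with an exponent depending on $\alpha$, so one cannot conclude $M\subseteq\sqrt{\bigcap_\alpha C_\alpha}$ without a uniform bound. Indeed the forward implication fails for integral extensions that are not module-finite: if $(R,M)$ is a one-dimensional Noetherian local domain whose integral closure $S=\overline{R}$ is not a finite $R$-module (Akizuki--Nagata examples), then $C=(R:S)=0$ because any nonzero $c\in C$ would embed $S$ into $c^{-1}R$ and force finiteness, so $\sqrt{C}=0\notin\mathrm{Max}(R)$; yet $\mathrm{Spec}(R)=\{0,M\}$ and $(S/R)_0=0$, so the extension is $M$-crucial. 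Thus neither your plan nor the paper's sketch establishes the forward implication in the stated generality; it does hold --- by your own observation $\mathrm{Supp}(S/R)=\mathrm{V}(C)$ --- as soon as $S/R$ is a finitely generated $R$-module, which is the only situation in which the paper actually uses it, since integral FCP extensions are finite.
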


\begin{proof}   

(1) If the extension is $M$-crucial, suppose that there is some $x \in C\setminus M$, then it is easily seen that $R_M=S_M$, a contradiction.

We denote by $ \{R_\alpha \mid \alpha \in I\}$   the family of all  finite extensions 
$R \subset R_\alpha $ with $R_\alpha \in [R,S]$   and  conductor $C_\alpha$.

 (2) For  $M \in \mathrm{Spec}(R)$,  observe that $M$ is a crucial ideal of  $R \subset S$    if and only if  $M$ is a crucial ideal of  each $R\subset R_\alpha$. Then it is enough to use the following facts: $\mathrm{Supp}(R_\alpha /R) = \mathrm{V}(C_\alpha)$ and $C = \cap [C_\alpha \mid \alpha \in I]$.  
\end{proof}

\begin{theorem}\label{1.2}  \cite[D\' efinition 1.1]{FO} and \cite[Theorem 3.2]{Pic 7}   A  pointwise minimal extension $A\subset B $ admits  a crucial (maximal) ideal $M$ and is either integral or Pr\"ufer minimal,  these conditions being mutually exclusive. Moreover,  a minimal extension is a simple  algebra. 
\end{theorem}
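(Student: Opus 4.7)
The plan is to split the statement into its four assertions and dispatch them in order, leaning on the Ferrand--Olivier classification of minimal extensions (three integral types --- inert, decomposed, ramified --- together with the Pr\"ufer minimal = flat epimorphic type, each possessing a crucial maximal ideal).

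The last claim is immediate: if $A\subset B$ is minimal and $t\in B\setminus A$, then $A\subsetneq A[t]\subseteq B$ forces $A[t]=B$ by minimality, so $B$ is simple over $A$. Next, for $A\subset B$ pointwise minimal, apply the Ferrand--Olivier dichotomy to each $A\subset A[t]$ with $t\in B\setminus A$: this produces a crucial maximal ideal $M_t$ (either $(A:A[t])$ in the integral cases, or the unique maximal ideal at which the localization is nontrivial in the Pr\"ufer minimal case) and a well-defined type for the minimal extension $A\subset A[t]$.

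The crux is to show that $M_t$ and the type are both independent of $t$. For the crucial ideal, I would argue by contradiction: suppose $s,t\in B\setminus A$ with $M_s\neq M_t$. Localizing at $M_s$ gives $A[t]_{M_s}=A_{M_s}$, so $t$ acts as an element of $A_{M_s}$, while symmetrically $s\in A_{M_t}$. I would then test the element $t+s$ (and, if that lands back in $A$, replace $s$ by an $A$-linear perturbation such as $2s$ or $1+s$ to force it out of $A$), apply pointwise minimality to $t+s$, and use the support computation $\mathrm{Supp}_A(A[t+s]/A)=\{M_{t+s}\}$ together with the two localization identities above to derive that $M_{t+s}$ must equal both $M_t$ and $M_s$, a contradiction. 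This case analysis across the four types is the step I expect to be most delicate, and it is exactly the content of \cite[Theorem 3.2]{Pic 7}.

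With the crucial ideal $M$ fixed, the dichotomy localizes at $M$, reducing to the case where $A$ is local. If some $A\subset A[t]$ is Pr\"ufer minimal, then $A[t]$ is $A$-flat and the integral closure of $A$ in $A[t]$ is $A$ itself; consequently, for any $s\in B\setminus A$, the minimal extension $A\subset A[s]$ cannot be integral (else combining $s$ with $t$ in a single generator via pointwise minimality would contradict the Pr\"ufer character), so every $A[s]$ is Pr\"ufer minimal with the same $M$, which in turn forces $A[s]=A[t]$ and hence $B=A[t]$ is itself Pr\"ufer minimal. Otherwise every $A[t]$ is integral over $A$, and since $B$ is generated as an $A$-algebra by integral elements, $B$ is integral over $A$. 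Mutual exclusivity is then automatic: an extension which is both integral and Pr\"ufer (in particular a minimal flat epimorphism) must be an isomorphism by the standard fact that an integral flat epimorphism is the identity, contradicting $A\subsetneq B$.
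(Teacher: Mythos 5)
The paper itself offers no proof of this statement: it is recalled from Ferrand--Olivier and from \cite[Theorem 3.2]{Pic 7}, so there is no in-paper argument to measure yours against. On its own terms, your outline handles the easy parts correctly: minimality forces $A[t]=B$ for every $t\in B\setminus A$ (simplicity), and the uniqueness of the crucial ideal via the test element $t+s$ does go through --- in fact more easily than you suggest, since if $t+s\in A$ then $s\in A[t]$, whence $A[s]\subseteq A[t]$ and minimality gives $A[s]=A[t]$ and $M_s=M_t$ outright; no perturbation is needed, and the one you propose, $1+s$, would not provide one anyway because $A[1+s]=A[s]$ and $t+(1+s)$ still lies in $A$. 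The integral case and the mutual exclusivity are then immediate, as you say.

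The genuine gap is in the Pr\"ufer branch. You assert that once every $A[s]$ is Pr\"ufer minimal with the same crucial ideal $M$, this ``forces $A[s]=A[t]$ and hence $B=A[t]$.'' Nothing in what precedes rules out two incomparable minimal flat epimorphic subextensions of $B$ with the same crucial ideal, and this is precisely the substantive content of the cited theorem. A standard way to close it is to note that $A[s]A[t]$ is again a Pr\"ufer subextension supported only at $M$ and that a Pr\"ufer extension of a local ring has linearly ordered intermediate rings (\cite[Theorem 6.10]{DPP2}, used elsewhere in this paper), so $A[s]_M$ and $A[t]_M$ are comparable and minimality forces equality. Similarly, your mechanism for excluding the coexistence of an integral $A[s]$ with a Pr\"ufer minimal $A[t]$ (``combining $s$ with $t$ in a single generator'') is not an argument; the clean route is the conductor computation of Lemma~\ref{1.12}: $MA[s]=M$ and $MA[t]=A[t]$ are incompatible inside $A[s]A[t]$ once the crucial ideals agree. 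Without these two inputs the dichotomy is not established.
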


Three types of minimal integral extensions exist, characterized in the next theorem, (a consequence of  the fundamental lemma of Ferrand-Olivier), so that there are four types of minimal extensions.

\begin{theorem}\label{1.3} \cite [Theorems 2.2 and 2.3]{DPP2} Let $R\subset T$ be an extension and  $M:=(R: T)$. Then $R\subset T$ is minimal and finite if and only if $M\in\mathrm{Max}(R)$ and one of the following three conditions holds:

\noindent (a) {\bf inert case}: $M\in\mathrm{Max}(T)$ and $R/M\to T/M$ is a minimal field extension;

\noindent (b) {\bf decomposed case}: There exist $M_1,M_2\in\mathrm{Max}(T)$ such that $M= M _1\cap M_2$ and the natural maps $R/M\to T/M_1$ and $R/M\to T/M_2$ are both isomorphisms; or, equivalently, there exists $q\in T\setminus R$ such that $T=R[q],\  q^2-q\in M$, and $Mq\subseteq M$.

\noindent (c) {\bf ramified case}: There exists $M'\in\mathrm{Max}(T)$ such that ${M'}^2 \subseteq M\subset M',\  [T/M:R/M]=2$, and the natural map $R/M\to T/M'$ is an isomorphism; or, equivalently, there exists $q\in T\setminus R$ such that $T=R[q],\ q^2\in M$, and $Mq\subseteq M$.
\end{theorem}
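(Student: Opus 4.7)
The plan is to reduce the statement to the Ferrand--Olivier fundamental lemma by passing to the quotient by the conductor. Since $R\subset T$ is minimal and (finite, hence) integral, that fundamental lemma forces $M:=(R:T)\in\mathrm{Max}(R)$. Because $M$ is simultaneously an ideal of $R$ and of $T$, every intermediate ring $T'\in[R,T]$ automatically contains $M$, so $T'\mapsto T'/M$ is a bijection between $[R,T]$ and the $(R/M)$-subalgebras of $T/M$. Setting $k:=R/M$, the induced extension $k\subset T/M$ is therefore itself minimal, with $k$ a field and trivial conductor.

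I would then appeal to the Ferrand--Olivier classification of minimal $k$-algebras over a field: such an algebra $B$ must be either (i) a minimal field extension of $k$, (ii) isomorphic to $k\times k$, or (iii) isomorphic to $k[x]/(x^2)$. These three possibilities read off the structure of $T/M$ in the three cases of the theorem. Possibility (i) makes $T/M$ a field, so $M\in\mathrm{Max}(T)$, yielding case (a). Possibility (ii) produces two distinct maximal ideals $M_1,M_2$ of $T$ with $M_1\cap M_2=M$ and both residue fields $T/M_i$ isomorphic to $k$, yielding case (b). Possibility (iii) yields a single maximal ideal $M'/M$ of $T/M$ with $(M'/M)^2=0$, $[T/M:k]=2$, and residue field $k$, yielding case (c).

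The equivalent reformulations in terms of a generator $q$ are routine: in (b), lifting a nontrivial idempotent of $T/M$ gives $q\in T\setminus R$ with $T=R[q]$ and $q^2-q\in M$, while $Mq\subseteq M$ holds because $M$ is an ideal of $T=R[q]$; conversely, such a $q$ splits $T/M$ as $k\times k$ via the Chinese remainder theorem applied to the kernels of $\bar{q}\mapsto 0$ and $\bar{q}\mapsto 1$. In (c), one lifts instead a nonzero nilpotent of $T/M$. The backward direction (that each of (a), (b), (c) yields minimality) reduces under the same subalgebra correspondence to observing that none of a minimal field extension, $k\times k$, or $k[x]/(x^2)$ admits a proper intermediate $k$-subalgebra.

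The main obstacle is the Ferrand--Olivier fundamental lemma itself. Its proof splits into cases according as $T/M$ is reduced or not. If reduced, the Chinese remainder theorem decomposes it into a product of field extensions of $k$ indexed by its maximal ideals, and minimality forces at most two factors, each equal to $k$. If not, one uses the nilradical $N$ together with the candidate intermediate subalgebras $k+N$ and $k+N^2$ to force $N^2=0$ and $\dim_k N=1$. The remainder is bookkeeping that identifies this structure with the maximal ideals and residue fields of $T$ in the three enumerated cases.
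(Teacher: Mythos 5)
Your argument is correct and is exactly the derivation the paper has in mind: Theorem~\ref{1.3} is recalled from \cite[Theorems 2.2 and 2.3]{DPP2} without proof and is explicitly described there as ``a consequence of the fundamental lemma of Ferrand-Olivier,'' which is precisely your route (pass to $R/M\subset T/M$ via the conductor, apply the Ferrand--Olivier classification of minimal algebras over a field into a minimal field extension, $k\times k$, or $k[x]/(x^2)$, then translate back). No discrepancies to report.
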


We give here two lemmas  used in earlier papers and recall some needed results.  

\begin{lemma} \label{1.12}  Let $R\subset S$ be an  extension and $T,U\in[R,S]$ such that $R\subset T$ is a finite minimal  extension and $R\subset U$ is a Pr\"ufer minimal extension. Then, $\mathcal{C}(R,T)\neq\mathcal{C}(R,U)$, so that $R$ is not a local ring.
\end{lemma}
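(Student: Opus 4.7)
The plan is to argue by contradiction: suppose $M := \mathcal{C}(R,T) = \mathcal{C}(R,U)$. Since both $T/R$ and $U/R$ are supported only at $M$, the extensions localize cleanly via the standard bijection $[R,T]\to[R_M,T_M]$ (and similarly for $U$): $R_M \subset T_M$ remains a finite minimal extension and $R_M \subset U_M$ remains a Pr\"ufer minimal extension, each with crucial ideal $MR_M$. So I may replace $R, T, U$ by their localizations at $M$ and assume henceforth that $R$ is local with maximal ideal $M$.

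Next I would establish two key containments. First, Theorem~\ref{1.3} identifies the conductor as $(R:T) = M$, whence $MT \subseteq R$. Second, I claim $MU = U$. For this, suppose otherwise and pick a maximal ideal $N$ of $U$ containing $MU$. Then $M \subseteq N \cap R$ forces $N \cap R = M$ since $M$ is maximal. The flat-epimorphism property of $R \subset U$ gives $U_N = R_{N \cap R} = R_M = R$, using that $R$ is local. The composition $R \hookrightarrow U \twoheadrightarrow U_N = R$ is the identity on $R$, so the inclusion $R \hookrightarrow U$ is a split monomorphism; being also an epimorphism, it must be an isomorphism (an epic split mono is iso in any category), contradicting $R \neq U$. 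I expect this $MU = U$ step to be the main obstacle, since it is where the local structure of $R$ combines with the flat-epimorphism machinery to exclude the ``integral'' possibility.

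Combining the two containments, $T = T \cdot 1 \subseteq T \cdot MU = (MT) \cdot U \subseteq R \cdot U = U$, so $T \subseteq U$. But $R \subset U$ is Pr\"ufer, hence a normal pair, and in particular $R$ is integrally closed in $U$. Every element of $T$ is integral over $R$, so $T \subseteq R$, contradicting $R \subsetneq T$. This proves $\mathcal{C}(R,T) \neq \mathcal{C}(R,U)$. The second conclusion follows at once: crucial ideals are maximal, so if $R$ were local, both crucial ideals would have to coincide with its unique maximal ideal, contradicting the inequality just proved.
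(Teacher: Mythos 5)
Your proof is correct and follows essentially the same route as the paper: both hinge on the identity obtained by evaluating $MUT$ two ways, using $MT\subseteq R$ (indeed $MT=M$) from the finiteness of $R\subset T$ and $MU=U$ from the Pr\"ufer minimality of $R\subset U$, to force $T\subseteq U$ and a contradiction with $R$ being integrally closed in $U$. The only difference is cosmetic: you localize first and derive $MU=U$ directly from the flat-epimorphism property (a split epic mono is an isomorphism), where the paper simply cites \cite[Scholium A (1)]{Pic 5}.
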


\begin{proof} Assume that $\mathcal{C}(R,T)=\mathcal{C}(R,U)$ and set $M:=\mathcal{C}(R,T)=(R:T)=\mathcal{C}(R,U)\in\mathrm{Max}(R)$. Then, $MT=M$ and  $MU=U$ because $R\subset U$ is a Pr\"ufer minimal extension \cite[Scholium A (1)]{Pic 5}. It follows that $MUT=UT=MTU=MU=U$, a contradiction.
\end{proof}

\begin{proposition}\label{1.14} \cite[Corollary 3.2]{DPP2} If there exists a maximal chain $R=R_0 \subset\cdots\subset  R_i \subset\cdots \subset R_n=S$ of extensions, where $R_i\subset R_{i+1}$ is minimal, then  $\mathrm{Supp}(S/R)=\{ \mathcal C (R_i, R_{i+1})\cap R\mid i=0,\ldots,n-1\}$.
\end{proposition}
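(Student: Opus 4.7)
\medskip

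The plan is to proceed by induction on $n$, the length of the given maximal chain, using the fact that the support of an $R$-module is additive on short exact sequences. The base case $n=1$ is immediate from Definition~\ref{1.1}, since $\mathcal{C}(R,S)$ is defined precisely as the unique element of $\mathrm{Supp}_R(S/R)$ when the latter is a singleton, and this singleton is automatically in $\mathrm{Max}(R)$.

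For the inductive step, I would apply the exact sequence of $R$-modules
\[
0 \to R_{n-1}/R \to S/R \to S/R_{n-1} \to 0
\]
to obtain $\mathrm{Supp}_R(S/R) = \mathrm{Supp}_R(R_{n-1}/R) \cup \mathrm{Supp}_R(S/R_{n-1})$. The induction hypothesis gives $\mathrm{Supp}_R(R_{n-1}/R) = \{\mathcal{C}(R_i,R_{i+1}) \cap R \mid 0 \leq i \leq n-2\}$, so it remains to show $\mathrm{Supp}_R(S/R_{n-1}) = \{\mathcal{C}(R_{n-1},S) \cap R\}$. Writing $M := \mathcal{C}(R_{n-1},S) \in \mathrm{Max}(R_{n-1})$, I would compute, for $P \in \mathrm{Spec}(R)$, the localization
\[
(S/R_{n-1}) \otimes_R R_P \;\cong\; (S/R_{n-1}) \otimes_{R_{n-1}} (R_{n-1} \otimes_R R_P),
\]
using that $S/R_{n-1}$ is $M$-torsion over $R_{n-1}$ (since $\mathrm{Supp}_{R_{n-1}}(S/R_{n-1}) = \{M\}$). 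This localization is nonzero if and only if $M$ survives in $R_{n-1} \otimes_R R_P$, i.e., if and only if $M \cap R \subseteq P$. Hence $\mathrm{Supp}_R(S/R_{n-1}) = V(M \cap R)$.

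The main obstacle is showing that $V(M \cap R)$ reduces to the singleton $\{M \cap R\}$, that is, that the contracted crucial ideal $M \cap R$ is maximal in $R$. I would handle this by cases along the dichotomy of Theorem~\ref{1.2}. In the integral case (Theorem~\ref{1.3}), $R_{n-1}/M$ is a field that is integral over the image of $R$, so $R/(M \cap R)$ embeds into a field as a subring with integral overring and hence is a field; thus $M \cap R \in \mathrm{Max}(R)$. In the Prüfer minimal case, $R_{n-1} \subset S$ is a flat epimorphism with $MS = S$, and combining this with the inductive hypothesis applied to the shorter chain $R \subset \cdots \subset R_{n-1}$ (whose crucial contractions are all maximal) lets me rule out any strict specialization of $M \cap R$: any such prime $Q$ of $R$ would give a nontrivial flat epi after localization while also lying in the support computed by the induction hypothesis, forcing $Q = M \cap R$.

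Finally, having established that each $V(\mathcal{C}(R_i,R_{i+1}) \cap R)$ is the singleton $\{\mathcal{C}(R_i,R_{i+1}) \cap R\}$, the union formula yields exactly the desired description of $\mathrm{Supp}_R(S/R)$. The delicate point throughout is not the additivity of support, which is routine, but rather the maximality of the contracted crucial ideals; this is where the structural dichotomy for minimal extensions (Theorems~\ref{1.2}--\ref{1.3}) is essential and where the induction must be set up carefully so that the Prüfer case can be reduced to the integral one already handled.
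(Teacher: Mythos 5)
The paper does not actually prove this proposition; it is quoted from \cite[Corollary 3.2]{DPP2}. Judging your argument on its own merits, there is a genuine gap, and it sits exactly at what you call the ``main obstacle'': the claim that each contracted crucial ideal $\mathcal{C}(R_{n-1},S)\cap R$ is maximal in $R$ (equivalently, that $\mathrm{Supp}_R(S/R_{n-1})$ is a singleton) is simply false, so no case analysis can establish it. The paper itself supplies the counterexample, right after Corollary~\ref{3.23}: let $R$ be a two-dimensional valuation domain with primes $0\subset P\subset M$, and take the maximal chain $R\subset T:=R_P\subset S:=\mathrm{Frac}(R)$ of two minimal (Pr\"ufer) extensions. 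Here $\mathcal{C}(T,S)=PR_P$ contracts to $P$, which is \emph{not} maximal, and $\mathrm{Supp}_R(S/T)=V(P)=\{P,M\}$ is not a singleton. Indeed, Proposition~\ref{3.2} of the paper is devoted precisely to length-$2$ extensions with $\mathrm{Supp}(S/R)\not\subseteq\mathrm{Max}(R)$, which by the very statement you are proving forces some $\mathcal{C}(R_i,R_{i+1})\cap R$ to be non-maximal. Your two sub-arguments fail correspondingly: in the ``integral'' case, knowing that $R_{n-1}\subset S$ is integral does not make $R_{n-1}/M$ integral over $R/(M\cap R)$ unless $R\subseteq R_{n-1}$ itself is integral, which earlier Pr\"ufer steps in the chain can destroy (in the example, $T/PR_P=\kappa(P)$ is the fraction field of $R/P$, not integral over it); and in the Pr\"ufer case the asserted ``forcing $Q=M\cap R$'' is exactly what the example refutes, since $Q=M$ strictly specializes $P=M\cap R$ and does lie in $\mathrm{Supp}_R(S/T)$.

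Up to that point your computation is fine: the short exact sequence gives $\mathrm{Supp}_R(S/R)=\mathrm{Supp}_R(R_{n-1}/R)\cup\mathrm{Supp}_R(S/R_{n-1})$, and $\mathrm{Supp}_R(S/R_{n-1})=V(\mathcal{C}(R_{n-1},S)\cap R)$ is correct. But the proposition is then \emph{not} a formal consequence of these identities: it holds only because the extra primes of $V(\mathcal{C}(R_i,R_{i+1})\cap R)$ lying strictly above $\mathcal{C}(R_i,R_{i+1})\cap R$ are always absorbed by the contractions coming from the \emph{other} links of the chain (in the example, $M=\mathcal{C}(R,T)$ accounts for the extra prime in $V(P)$). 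Establishing that absorption is the real content of \cite[Corollary 3.2]{DPP2} and requires an argument you have not given; as structured, your induction cannot close.
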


{\begin{lemma} \label{1.15}  Let $R\subset S$ be an FMC extension. If $M\in\mathrm{MSupp}(S/R)$, there exists $T\in[R,S]$ such that $R\subset T$ is minimal with $\mathcal{C}(R,T)=M$. 
\end{lemma}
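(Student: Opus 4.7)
The plan is to exploit the given finite maximal chain $R = R_0 \subset R_1 \subset \cdots \subset R_n = S$ (each step minimal) to locate a minimal step whose crucial ideal contracts to $M$, and then to build $T$ from its generator by a local-to-global adjustment. Writing $N_j := \mathcal{C}(R_j,R_{j+1})$ and $M_j := N_j \cap R$, Proposition~\ref{1.14} gives $M = M_i$ for some index, and I take $i$ to be the smallest such.

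For each $j$, the $R_j$-module $R_{j+1}/R_j$ is supported on $\{N_j\}$ (a fundamental property of minimal extensions coming out of the Ferrand--Olivier lemma), so its $R$-support is $V(M_j)$. For $j < i$ the maximal ideals $M_j$ and $M$ are distinct, hence $M \notin V(M_j)$ and therefore $(R_j)_M = (R_{j+1})_M$. Iterating yields $R_M = (R_i)_M$. Consequently, localizing the minimal extension $R_i \subset R_{i+1}$ at the multiplicative set $R \setminus M$ (which misses $N_i$, since $N_i \cap R = M$) produces a minimal extension $R_M \subset (R_{i+1})_M$. If $i = 0$, then $T := R_1$ already works, so assume $i > 0$.

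Pick $r \in R_{i+1}$ with $R_{i+1} = R_i[r]$ (Theorem~\ref{1.2}). The plan is to modify $r$ to an element $r' = r - c$ with $c \in R_i$ so that $r' \in R_{M'}$ for every maximal ideal $M' \neq M$ of $R$. For $M' \in \mathrm{MSupp}(S/R) \setminus \{M\}$ one already has $r \in (R_{i+1})_{M'} = (R_i)_{M'}$ (the last step collapses at $M'$ since $M_i = M \neq M'$), so that $r$ represents a class in each $(R_i)_{M'}/R_{M'}$; a Chinese Remainder argument over the finitely many such maximal ideals of $R$ supplies the required $c \in R_i$. Setting $T := R[r']$, one computes $T_M = R_M[r'] = R_M[r] = (R_{i+1})_M$ (because $c \in R_i \subseteq R_M$) and $T_{M'} = R_{M'}$ for every other maximal $M'$, so $R \subset T$ is locally minimal at $M$ and trivial elsewhere, hence minimal with $\mathcal{C}(R,T) = M$. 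The main obstacle is engineering the Chinese Remainder adjustment $r \to r'$ so that the local behaviors at all the finitely many maximal ideals of $\mathrm{MSupp}(S/R) \setminus \{M\}$ are matched simultaneously without disturbing the behavior at $M$; the key inputs are the finiteness of $\mathrm{MSupp}(S/R)$ supplied by Proposition~\ref{1.14} and the explicit generator structure of minimal extensions from Theorem~\ref{1.3}.
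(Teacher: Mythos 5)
Your reduction to the least index $i$ with $\mathcal{C}(R_i,R_{i+1})\cap R=M$, together with the observation that the chain collapses at $M$ below that index, is exactly the paper's first move; but where the paper then simply invokes \cite[Lemma 1.10]{Pic 3} to descend the minimal extension $R_i\subset R_{i+1}$ to a minimal extension of $R$ crucial at $M$, you attempt a direct construction, and that construction has a genuine gap: you treat the contractions $M_j:=\mathcal{C}(R_j,R_{j+1})\cap R$ as maximal ideals of $R$. Since $R\subset S$ is only assumed to be FMC, the intermediate rings need not be integral over $R$ and the $M_j$ can be non-maximal primes (already for a two-dimensional valuation domain $R\subset R_P\subset\mathrm{Frac}(R)$ the second step contracts to the non-maximal prime $P$; compare Proposition~\ref{3.2}). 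This hidden assumption enters twice. First, ``$M_j\neq M$, hence $M\notin V(M_j)$'' fails as stated when $M_j$ is non-maximal, since one could a priori have $M_j\subsetneq M$; this point is repairable by applying Proposition~\ref{1.14} to the subchain $R\subset\cdots\subset R_i$ and using that a support is stable under specialization together with the minimality of $i$.

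Second, and more seriously, the ``Chinese Remainder argument'' is asserted, not proved. What you need is that the tuple of classes of $r$ in the modules $(R_i/R)_{M'}$, for $M'$ running over $\mathrm{MSupp}_R(R_i/R)$, is realized by a single element $c\in R_i$; that is, you need the map $R_i/R\to\prod_{M'}(R_i/R)_{M'}$ to hit this tuple. When $\mathrm{Supp}_R(R_i/R)$ consists of maximal ideals only, this holds because a module supported on finitely many maximal ideals is canonically the direct sum of its localizations at them; but when the support contains non-maximal primes that decomposition breaks down, and even the surjectivity of a single localization map $R_i/R\to(R_i/R)_{M'}$ is no longer automatic. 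Nothing in your argument addresses this, so the existence of $c$ (hence of $T$) is not established in the generality required. The paper sidesteps the whole issue by quoting the descent lemma of \cite{Pic 3}; an alternative self-contained route is to iterate the Crosswise Exchange Lemma~\ref{1.13} down the chain $R\subset R_1\subset\cdots\subset R_i\subset R_{i+1}$, which avoids any global approximation of generators.
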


\begin{proof} Let $\{R_i\}_{i=0}^n$ be a finite maximal chain such that $R_0:=R$ and $R_n:=S$. If $\mathcal{C}(R,R_1)=M$, then, $T=R_1$. So, assume that $M\neq \mathcal{C}(R,R_1)$. Let $k\in\{1,\ldots, n-1\}$ be the least integer $i$ such that $M=\mathcal{C}(R_i,R_{i+1})\cap R$ (Proposition~\ref{1.14}). For each $i<k$, we have $M\neq\mathcal{C}(R_i,R_{i+1})\cap R$, so that $M\in\mathrm{Max}(R)\setminus \mathrm{MSupp}(R_k/R)$. In view of \cite[Lemma 1.10]{Pic 3}, there exists $T\in[R,R_{k+1}]$ such that $R\subset T$ is minimal (of the same type as $R_k\subset R_{k+1}$) with $\mathcal{C}(R,T)=M$. 
\end{proof}

\begin{lemma}\label{1.13} (Crosswise exchange) \cite[Lemma 2.7]{DPP2} Let $R\subset S$ and $S\subset T$ be  minimal  extensions,  $M:= \mathcal{C}(R,S)$, $N:= \mathcal{C}(S,T)$ and $P:=N\cap R$ be such that $P\not\subseteq M$. Then there is   $S' \in [R,T]$ such that $R\subset S'$  is minimal  of the same type as $S\subset T$ and $P= \mathcal{C}(R,S')$; and $S'\subset T$ is  minimal  of the same type as $R\subset S$ and $MS' =\mathcal{C}(S',T)$. Moreover,  $[R,T]=\{R,S,S',T\}$ and $R_Q=S'_Q=T_Q$ for $Q\in \mathrm{Max}(R)\setminus \{M,P\}$.
\end{lemma}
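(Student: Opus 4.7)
My plan is to reduce the statement to a local analysis at the two primes $M$ and $P$, construct $S'$ as a pullback, and read off everything from the support computation.

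First I would check that $P=N\cap R$ is maximal in $R$: this is automatic when $R\subset S$ is integral (since $N\in\mathrm{Max}(S)$ lies over a maximal of $R$); in the Pr\"ufer minimal case, any prime strictly above $P$ other than $M$ would pull back under $\mathrm{Spec}(S)\hookrightarrow\mathrm{Spec}(R)\setminus\{M\}$ to a strict overprime of $N$, contradicting $N\in\mathrm{Max}(S)$, and $P\subsetneq M$ is excluded by hypothesis. Hence $M$ and $P$ are incomparable maximals of $R$, and Proposition~\ref{1.14} applied to $R\subset S\subset T$ gives $\mathrm{Supp}_R(T/R)=\{M,P\}$, with $\mathrm{Supp}_R(S/R)=\{M\}$ and $\mathrm{Supp}_R(T/S)=\{P\}$.

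Next I would assemble the local picture. Localizing at $R\setminus P$: since $P\ne M$, the map $R_P\to S_P$ is an isomorphism, and $S\subset T$ localizes to a minimal $R_P$-extension $R_P=S_P\subset T_P$ of the same type as $S\subset T$. Localizing at $R\setminus M$: since $P\notin\mathrm{Supp}_R(T/S)$, $S_M=T_M$, so $R_M\subset T_M=S_M$ is minimal of the same type as $R\subset S$. At every other maximal $Q$ of $R$, $R_Q=S_Q=T_Q$. I would then set $S':=\{x\in T\mid x_M\in R_M\}$, the pullback of $R\hookrightarrow R_M$ along $T\to T_M$. Using $M+P=R$ and $T_M=S_M$, direct computation yields $S'_M=R_M$, $S'_P=T_P$, and $S'_Q=T_Q=R_Q$ for $Q\in\mathrm{Max}(R)\setminus\{M,P\}$, so $\mathrm{Supp}_R(S'/R)=\{P\}$. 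A case-by-case descent from the local minimality of $R_P=S_P\subset T_P=S'_P$ (via Theorem~\ref{1.3} in the integral cases and the flat-epi description in the Pr\"ufer case) then shows that $R\subset S'$ is minimal of the same type as $S\subset T$ with $\mathcal{C}(R,S')=P$. A symmetric check of $S'\subset T$ at each maximal — trivial off $M$, and exactly $R_M\subset S_M$ at $M$ — yields that $S'\subset T$ is minimal of the same type as $R\subset S$ with $\mathcal{C}(S',T)=MS'$.

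For the counting, any $U\in\,]R,T[$ fits into a two-step maximal chain, so $R\subset U$ is minimal with $\mathcal{C}(R,U)\in\{M,P\}$; the local picture forces $U$ to agree with $S$ or $S'$ at every maximal, hence $U\in\{S,S'\}$, and Lemma~\ref{1.12} confirms $S\ne S'$ in the case of mixed types. The stated equalities at $Q\notin\{M,P\}$ fall out directly from $\mathrm{Supp}_R(T/R)=\{M,P\}$. The hard part will be the descent from the local minimality of $R_P\subset S'_P$ to the global minimality of $R\subset S'$ of the \emph{same} type: this is where the Ferrand-Olivier classification and the comaximality $M+P=R$ are genuinely needed, bridging the non-trivial local picture at $P$ with the trivial picture at $M$ without disturbing either.
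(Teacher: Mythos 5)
The paper does not prove this lemma: it is quoted verbatim from \cite[Lemma 2.7]{DPP2}, so there is no in-paper argument to compare yours against. Judged on its own, your reconstruction is architecturally sound and, I believe, correct: reduce to the two comaximal maximal ideals $M$ and $P$, note $S_P=R_P$ and $T_M=S_M$, define $S'$ as the pullback $T\times_{T_M}R_M$, identify its localizations, and descend minimality and type. Two remarks. First, your opening case analysis for the maximality of $P$ is more work than needed: since supports are stable under specialization (as recorded after Definition~\ref{1.1}), every maximal ideal containing $P$ lies in $\mathrm{Supp}_R(T/R)=\{M,P\}$, and $P\not\subseteq M$ then forces $P\in\mathrm{Max}(R)$ in one line.

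Second, the two places where you wave your hands are not quite where you locate the difficulty. The pivotal claim is $S'_P=T_P$ (equivalently $S'\neq R$, since $[R_P,T_P]=\{R_P,T_P\}$ and $S'$ is determined by its localizations); calling it a ``direct computation'' is acceptable only if you say which computation: localization is exact, so it commutes with the pullback defining $S'$, giving $S'_P=T_P\times_{(T_M)_P}(R_M)_P$, and the two identities you established yield $(T_M)_P=(S_M)_P=(R_P)_M=(R_M)_P$, whence the fiber product collapses to $T_P$. Note that comaximality is not the input here; $S_P=R_P$ is. By contrast, the descent of type, which you flag as ``the hard part,'' is routine once the localizations are identified, exactly as in Proposition~\ref{3.11}(2). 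Finally, in the counting step, the opening clause ``any $U\in\,]R,T[$ fits into a two-step maximal chain'' presupposes $\ell[R,T]=2$, which is part of what is being proved; drop it and argue directly from the injection $U\mapsto(U_M,U_P)\in\{R_M,S_M\}\times\{R_P,T_P\}$, which bounds $|[R,T]|$ by $4$ and identifies the four elements. With these repairs your proof goes through.
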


\section {First properties of extensions of length 2}

Next Theorem allows us to only  speak  of length 2 extensions.

\begin{theorem}\label{2.1} Let $R\subset S$ be a non-minimal extension. The following statemens are equivalent:
\begin{enumerate}
\item    $R\subset S$ is a minimal pair.
\item     $R\subset S$ is a  co-minimal pair.
\item  $\ell[R,S]=2$. 
\end{enumerate}

Hence,  an extension $R\subset  S$ with   $|[R,S]|=3$ has length    $2$.
 
\end{theorem}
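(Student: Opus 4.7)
The plan is to prove the cycle of implications $(3) \Rightarrow (1)$, $(3) \Rightarrow (2)$, $(1) \Rightarrow (3)$, and $(2) \Rightarrow (3)$, treating (1) and (2) symmetrically. The whole argument is essentially a bookkeeping exercise on chains in $[R,S]$, so no serious obstacle is expected; the main thing to watch is merely that the hypothesis of non-minimality guarantees $]R,S[\neq\emptyset$, so that the minimal/co-minimal pair hypothesis is not vacuous.

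For $(3)\Rightarrow(1)$: if $\ell[R,S]=2$ and $T\in {]R,S[}$, then any strict chain $R\subsetneq U\subsetneq T$ would extend to $R\subsetneq U\subsetneq T\subsetneq S$, a chain of length $3$, contradicting $\ell[R,S]=2$; hence $R\subset T$ is minimal. The implication $(3)\Rightarrow(2)$ is identical, cutting off the chain from the top instead of the bottom.

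For $(1)\Rightarrow(3)$: since $R\subset S$ is not minimal, pick any $T\in{]R,S[}$. By (1), $R\subset T$ is minimal. I claim $T\subset S$ is also minimal: for any $U\in [T,S]$ with $T\subsetneq U\subsetneq S$, we have $U\in{]R,S[}$, so (1) gives $[R,U]=\{R,U\}$; but $T\in [R,U]$ with $T\neq R,U$, a contradiction. So $R\subset T\subset S$ is a chain of length $2$, giving $\ell[R,S]\geq 2$. Conversely, any chain $R=R_0\subsetneq R_1\subsetneq\cdots\subsetneq R_n=S$ with $n\geq 3$ would force $R_1,R_2\in{]R,S[}$ with $R\subsetneq R_1\subsetneq R_2$, contradicting the minimality of $R\subset R_2$ granted by (1); hence $\ell[R,S]\leq 2$. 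The implication $(2)\Rightarrow(3)$ is dual, using co-minimality from above.

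For the final assertion, suppose $|[R,S]|=3$, say $[R,S]=\{R,T,S\}$. Then $R\subset S$ is not minimal, and ${]R,S[}=\{T\}$; since $[R,T]\subseteq [R,S]\setminus\{S\}=\{R,T\}$, the extension $R\subset T$ is minimal, so condition (1) is satisfied vacuously on the unique element of ${]R,S[}$, and the equivalence just proved yields $\ell[R,S]=2$.
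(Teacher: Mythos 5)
Your proof is correct and uses essentially the same elementary chain-refinement arguments as the paper; you merely organize the implications around (3) as a hub, whereas the paper proves the cycle $(1)\Rightarrow(2)\Rightarrow(1),(3)$ and $(3)\Rightarrow(1)$. The handling of the final assertion via $[R,T]\subseteq[R,S]\setminus\{S\}$ is also sound and matches the paper's (unstated) reasoning.
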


\begin{proof}  (1) $\Rightarrow$ (2) Let $T\in ]R,S[$ be such that $R\subset T$ is  minimal. If $T\subset S$ is not  minimal, there exists a tower of extensions $R\subset T\subset T'\subset S$. But the assumption gives that $T\in]R,T'[$ with $R\subset T'$  minimal, a contradiction and then $T\subset S$ is minimal.

(2) $\Rightarrow$ (1) and (3) Let $T\in ] R,S[$ be such that $T\subset S$ is  minimal. Assume that $R\subset T$ is not  minimal. There is a tower of extensions $R\subset T'\subset T\subset S$. As $T\in ]T',S[$  contradicts  $T'\subset S$  minimal,  it follows that $R\subset T$ is minimal.  In particular, any maximal chain from $R$ to $S$ has length 2 and $\ell[R,S]=2$.

(3) $\Rightarrow$ (1) Assume that $\ell[R,S]=2$. In particular, $R\subset S$ is  non-minimal. Let $T\in ]R,S[$, then, $R\subset T\subset S$ is a maximal chain, so that $R\subset T$ is a minimal extension.

The last result is obvious.
\end{proof}

\begin{proposition}\label{2.2} Let $R\subset S$ be a length 2 extension. Then:

 (1) Either $R\subset S$ is pointwise minimal (equivalently, co-pointwise minimal) or there exists $x\in S$ such that $S=R[x]$,  that is $S $ is simple and these conditions are mutually exclusive.

(2) $R\subset S$ is a quasi-Pr\"ufer  extension; so that either $\overline R\in\{R,S\}$, or $R\subset \overline R$ and $\overline R\subset S$ are minimal.
\end{proposition}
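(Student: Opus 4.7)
The plan is to reduce both parts of the proposition to Theorem~\ref{2.1}, which identifies length $2$ with the minimal pair and co-minimal pair conditions, together with the fact recalled in the introduction that every FMC extension is quasi-Pr\"ufer.

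For (1), the dichotomy turns on whether some $x\in S\setminus R$ satisfies $R[x]=S$. If such an $x$ exists, then $R\subset S$ is simple; at the same time $R\subset R[x]=S$ is not minimal (as $\ell[R,S]=2$), so $R\subset S$ cannot be pointwise minimal, yielding the announced mutual exclusivity. If no such generator exists, then $R[x]\in\,]R,S[$ for every $x\in S\setminus R$, and the minimal pair half of Theorem~\ref{2.1} gives $R\subset R[x]$ minimal, which is exactly pointwise minimality. The equivalence with co-pointwise minimality in this regime is symmetric: the co-minimal pair half of Theorem~\ref{2.1} applied to each $R[x]\in\,]R,S[$ forces $R[x]\subset S$ to be minimal, while the converse direction is automatic since co-pointwise minimality forbids $R[x]=S$ for any $x\in S\setminus R$.

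For (2), the length $2$ hypothesis makes every chain in $[R,S]$ have at most three elements, so $R\subset S$ has FCP, and any maximal chain $R\subset T\subset S$ is a finite maximal chain, so $R\subset S$ is FMC. The fact quoted in the introduction then yields that $R\subset S$ is quasi-Pr\"ufer. For the structural statement on $\overline R$, note that $\overline R\in[R,S]$: if $\overline R\notin\{R,S\}$, then $\overline R\in\,]R,S[$, and applying both halves of Theorem~\ref{2.1} with $T=\overline R$ gives that $R\subset\overline R$ and $\overline R\subset S$ are both minimal.

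No step here poses a genuine obstacle; the heavy lifting has been done in Theorem~\ref{2.1} and in the cited FMC$\,\Rightarrow\,$quasi-Pr\"ufer result. The only delicate point is to read the definitions of pointwise and co-pointwise minimality carefully, so as not to overlook the edge case where $R[x]$ could a priori coincide with $S$.
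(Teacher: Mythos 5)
Your proposal is correct and follows essentially the same route as the paper: part (1) is the dichotomy on whether some $R[x]$ equals $S$, with Theorem~\ref{2.1} forcing $R\subset R[x]$ and $R[x]\subset S$ minimal in the non-simple case, and part (2) combines the FMC/FCP $\Rightarrow$ quasi-Pr\"ufer fact (the paper cites \cite[Corollary 3.4]{Pic 5} directly) with the observation that the tower $R\subseteq\overline R\subseteq S$ has length at most $2$. The only difference is that you spell out the mutual exclusivity and the pointwise/co-pointwise equivalence, which the paper leaves implicit.
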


\begin{proof} (1) Obviously, $R\subset S$ is not  minimal. Asssume that $S\neq R[x]$ for any $x\in S$. Let $x\in S\setminus R$ and set $T:=R[x]$. Then, $T\in]R,S[$ so that $R\subset T$ and $T\subset S$ are  minimal   (Theorem~\ref{2.1}). By definition, $R\subset S$ is a pointwise minimal extension and a  co-pointwise minimal extension.  

 The first part of (2) is \cite[Corollary 3.4]{Pic 5}. For the second part, consider the tower $R\subseteq \overline R\subseteq S$ whose length  is $\leq 2$. 
\end{proof}

\begin{proposition}\label{2.3} If $R\subset S$ has length 2, then,  $|\mathrm{Supp} (S/R)|\leq 2$.
\end{proposition}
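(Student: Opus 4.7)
The plan is to exploit Proposition~\ref{1.14} applied to a maximal chain of length $2$ from $R$ to $S$. Since $\ell[R,S]=2$, the extension $R\subset S$ is in particular non-minimal, so $]R,S[\neq\emptyset$; pick any $T\in ]R,S[$. By Theorem~\ref{2.1} (applied with $\ell[R,S]=2$), both $R\subset T$ and $T\subset S$ are minimal, so $R=R_0\subset R_1\subset R_2=S$ with $R_1:=T$ is a maximal chain of minimal extensions.

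Next I would invoke Proposition~\ref{1.14}, which yields
\[
\mathrm{Supp}(S/R)=\bigl\{\mathcal{C}(R,T)\cap R,\ \mathcal{C}(T,S)\cap R\bigr\}.
\]
This is a set of cardinality at most $2$, giving the desired bound. Note that $\mathcal{C}(R,T)$ is already in $\mathrm{Spec}(R)$ (indeed in $\mathrm{Max}(R)$), so the first intersection is redundant, but the second is the genuine pullback of the crucial ideal of $T\subset S$ along $R\hookrightarrow T$.

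I expect no obstacle here: the argument is essentially a one-line application of Proposition~\ref{1.14}, with the only ingredient being the existence of at least one maximal chain of length exactly $2$, which is guaranteed by the hypothesis $\ell[R,S]=2$ together with Theorem~\ref{2.1}. The only subtlety worth flagging in the write-up is that the choice of $T$ is immaterial: the support is an invariant of $S/R$, while the right-hand side a priori depends on $T$, so in particular Proposition~\ref{1.14} forces the set $\{\mathcal{C}(R,T)\cap R,\ \mathcal{C}(T,S)\cap R\}$ to be independent of the chosen $T\in ]R,S[$.
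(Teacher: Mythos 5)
Your proposal is correct and follows the paper's own route exactly: both arguments take an arbitrary $T\in\,]R,S[$, observe that $R\subset T\subset S$ is a maximal chain of two minimal extensions, and apply Proposition~\ref{1.14} to conclude $\mathrm{Supp}(S/R)=\{\mathcal{C}(R,T),\ \mathcal{C}(T,S)\cap R\}$, hence has at most two elements. Your write-up is in fact slightly more explicit than the paper's in justifying the existence of the length-$2$ chain via Theorem~\ref{2.1}.
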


\begin{proof} By definition, $R\subset S$ is an FCP extension. It follows from (Proposition ~\ref{1.14}) that $|\mathrm{Supp}(S/R)|\leq 2$. In particular, for any $T\in]R,S[$, we have $\mathrm{Supp}(S/R)=\{\mathcal{C}(R,T),\mathcal{C}(T,S)\cap R\}$. 
\end{proof}
 We recall  the characterization of co-pointwise minimal extensions gotten in \cite[Theorem 3.2, Proposition 3.9 and Corollary 5.9]{Pic 7}:

\begin{proposition}\label{2.4} Let $R\subset S$ be a ring extension. Then $R\subset S$ is a co-pointwise minimal extension if and only if there is a maximal ideal $M$ of $R$ such that $MS = M$ and one of the following mutually exclusive conditions is satisfied, where $k:=R/M$ and  $p:=\mathrm{c}(k)$.

\begin{enumerate}
\item  $k= \mathbb{Z}/2\mathbb{Z}$ and  $S/M \cong k^3.$
\item  $S/M$ is a field, $x^p\in R$ for each $x\in S$ and $[S/M:k] = p^2$.
\item  $S/M \cong k[X,Y]/(X^2,XY,Y^2).$ 
\end{enumerate}

In each case, $S=R[x,y]$, where $\{x,y\}$ is a minimal system of generators and $\ell[R,S]=2$.
\end{proposition}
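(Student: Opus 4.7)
\emph{Proof plan.} The strategy is to reduce the problem to the classification of a finite $k$-algebra $B:=S/M$ (where $k:=R/M$), and then to analyse $B$ by the structure of its local decomposition.

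For the forward direction, suppose $R\subset S$ is co-pointwise minimal. Since co-pointwise minimality implies pointwise minimality, Theorem~\ref{1.2} produces a crucial maximal ideal $M=\mathcal{C}(R,S)$ and forces $R\subset S$ to be either integral or Pr\"ufer minimal; the Pr\"ufer minimal case is excluded, because a minimal $R\subset S$ cannot have $R[x]\subsetneq S$ for any $x\in S\setminus R$. Hence $R\subset S$ is integral and non-minimal, so for any $x\in S\setminus R$ the tower $R\subset R[x]\subset S$ yields $\ell[R,S]=2$. Applying Theorem~\ref{1.3} to each minimal step $R\subset R[x]$ gives $M\cdot R[x]=M$, and since $S$ is generated over $R$ by such elements $x$, we conclude $MS=M$.

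Setting $B:=S/M$, the assignment $T\mapsto T/M$ is a lattice isomorphism from $[R,S]$ onto the set of $k$-subalgebras of $B$. The problem becomes the classification of finite $k$-algebras $B$ with $\ell[k,B]=2$ such that every singly generated proper $k$-subalgebra $k[\bar y]\subsetneq B$ makes $k[\bar y]\subset B$ minimal. Decompose $B=B_1\times\cdots\times B_r$ into local factors. If $r\ge 2$, orthogonal idempotents produce explicit intermediate $k$-subalgebras whose number depends on $|k|$ and $r$; the length-two and minimality constraints together pin $r=3$, each $B_i=k$, and $|k|=2$, which is case~(1). If $r=1$ and $B$ is a field, pointwise minimality forces every $\bar y\in B$ to satisfy $\bar y^p\in k$, where $p=\mathrm{c}(k)$, and the length-two condition refines this to $[B:k]=p^2$, giving case~(2). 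If $r=1$ with $B$ a local non-field of maximal ideal $\mathfrak{m}\neq 0$, pointwise minimality forces $\bar y^2=0$ for every $\bar y\in\mathfrak{m}$, hence $\mathfrak{m}^2=0$; length two then gives $\dim_k\mathfrak{m}=2$, which is $B\cong k[X,Y]/(X^2,XY,Y^2)$, case~(3). The three cases are mutually exclusive by their respective structures.

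The converse is a direct verification: in each of the three cases, one exhibits, for every $\bar y\in B\setminus k$, a chain $k\subsetneq k[\bar y]\subsetneq B$ whose two steps are minimal by Theorem~\ref{1.3}. In each $B$ listed, exactly two generators are needed, so $S=R[x,y]$ with $\{x,y\}$ a minimal generating set and $\ell[R,S]=2$. The main obstacle is the product case $r\ge 2$: ruling out every product structure except $k^3$ with $k=\mathbb{F}_2$ is a delicate combinatorial task, because proper $k$-subalgebras of $k^r$ proliferate rapidly as either $r$ or $|k|$ grows (for instance, in $k^2$ with $|k|\ge 3$ there are already enough intermediate algebras to push the length above $2$), and the intermediate possibility $r=2$ with one nontrivial factor $B_i$ must also be eliminated by explicitly producing a forbidden chain.
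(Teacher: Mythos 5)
First, a point of reference: the paper does not actually prove Proposition~\ref{2.4}. It is recalled verbatim from \cite[Theorem 3.2, Proposition 3.9 and Corollary 5.9]{Pic 7}, so there is no in-paper argument to measure yours against. Your overall strategy --- pass to the finite $k$-algebra $B:=S/M$ via the conductor and classify $B$ through its decomposition into local Artinian factors --- is the natural route, and your forward reduction (excluding the Pr\"ufer minimal case, deducing $\ell[R,S]=2$, and getting $MS=M$ from Theorem~\ref{1.3} applied to each $R\subset R[x]$) is sound.

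The classification of $B$, however, which is the entire mathematical content, contains genuine gaps beyond the one you acknowledge. (i) In the local non-field case, the inference ``$\bar y^{\,2}=0$ for every $\bar y\in\mathfrak m$, hence $\mathfrak m^2=0$'' fails in characteristic $2$: in $k[X,Y]/(X^2,Y^2)$ every element of the maximal ideal squares to zero, yet $\mathfrak m^2=(XY)\neq 0$. One must instead first force $\dim_k\mathfrak m=2$ (for instance via Proposition~\ref{3.5}: both steps of a subintegral length-$2$ chain are ramified, each adding one dimension) and then kill $\mathfrak m^2$ by Nakayama. You also never exclude a local $B$ whose residue field $L$ strictly contains $k$; this requires noting that for nilpotent $\bar y\neq 0$ the minimal step $k[\bar y]\subset B$ would have to be inert with $\mathfrak m=k\bar y$ one-dimensional, while $\mathfrak m/\mathfrak m^2$ is an $L$-vector space of $k$-dimension at least $[L:k]\ge 2$. (ii) In the field case, ``pointwise minimality forces $\bar y^{\,p}\in k$'' is false as stated: any minimal separable extension is pointwise minimal. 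What you need is that co-pointwise minimality forces $B$ to be non-monogenic over $k$, and then to rule out the separable and exceptional configurations (cf.\ Definition~\ref{except} and Proposition~\ref{3.16}, where exceptional length-$2$ extensions are shown to be simple); only then does ``purely inseparable of height $1$ and degree $p^2$'' drop out. (iii) The product case $r\ge 2$, which is exactly where the striking restriction $|k|=2$ and $r=3$ comes from, is explicitly deferred. As written, the proposal is a plausible plan that identifies the correct trichotomy, but it is not yet a proof.
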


Taking into account the above results, to obtain a complete characterization
of length 2 extensions, we need only to study those who are simple.  In view of (Proposition~\ref{2.3}), each possible cardinality of  the support is examined in different sections. 

\section {Length 2 extensions   whose support has two elements}

 We remark that an extension whose support has two elements is not pointwise minimal  (Theorem ~\ref{1.2}) and is necessarily simple.

\begin{proposition}\label{3.1} Let $R\subset S$ be an extension such that $|\mathrm{Supp}(S/R)|=2$ and $\mathrm{Supp}(S/R)\subseteq\mathrm{Max}(R)$. The following conditions are equivalent:

(1) $\ell[R,S]=2$. 

(2) $R\subset S$ is locally minimal.

(3) $|[R,S]|=4$. 

If these conditions hold, then $R\subset S$ is simple.
\end{proposition}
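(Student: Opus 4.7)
Set $\mathrm{Supp}(S/R) = \{M_1, M_2\}$ with $M_1 \neq M_2$ both maximal in $R$. The plan is to prove $(1)\Leftrightarrow(3)$ via two applications of Lemma~\ref{1.13} (crosswise exchange) and $(1)\Leftrightarrow(2)$ via a local-to-global dictionary, then to deduce simplicity from Proposition~\ref{2.2}.

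For $(1)\Rightarrow(3)$, pick any $T \in ]R,S[$; by Theorem~\ref{2.1} both $R \subset T$ and $T \subset S$ are minimal, and the proof of Proposition~\ref{2.3} gives $\mathrm{Supp}(S/R) = \{\mathcal{C}(R,T), \mathcal{C}(T,S) \cap R\}$. After relabeling I may assume $M_1 = \mathcal{C}(R,T)$ and $M_2 = \mathcal{C}(T,S) \cap R$; as $M_1, M_2$ are distinct maximals, $M_2 \not\subseteq M_1$, so Lemma~\ref{1.13} produces a fourth ring $T'$ with $[R,S] = \{R, T, T', S\}$ and $\mathcal{C}(R,T') = M_2$, whence $|[R,S]| = 4$. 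The implication $(1)\Rightarrow(2)$ reuses these: $\mathrm{Supp}_T(S/T) = \{\mathcal{C}(T,S)\}$ contracts to $\{M_2\}$, so $(S/T)_{M_1} = 0$, hence $S_{M_1} = T_{M_1}$, and $R_{M_1} \subset T_{M_1}$ is minimal as the localization of a minimal extension at its crucial ideal; symmetrically $R_{M_2} \subset S_{M_2}$ is minimal via $T'$, while $R_P = S_P$ at any other prime.

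For $(3)\Rightarrow(1)$, write $]R,S[ = \{A, B\}$. If $A$ and $B$ are incomparable, every maximal chain from $R$ to $S$ has length $2$, as desired. Otherwise $A \subsetneq B$, forcing $R \subset A \subset B \subset S$ to be a chain of three minimal extensions (no room for insertion in $[R,S] = \{R, A, B, S\}$). Setting $\mathfrak{m}_1 := \mathcal{C}(R,A)$, $\mathfrak{m}_2 := \mathcal{C}(A,B) \cap R$, $\mathfrak{m}_3 := \mathcal{C}(B,S) \cap R$, Proposition~\ref{1.14} forces $\{\mathfrak{m}_1, \mathfrak{m}_2, \mathfrak{m}_3\} = \{M_1, M_2\}$, so some consecutive pair $\mathfrak{m}_i, \mathfrak{m}_{i+1}$ differs; as both are maximal in $R$ they are incomparable, and Lemma~\ref{1.13} applied to the corresponding minimal pair produces a new ring in $[R,S]$, contradicting $|[R,S]| = 4$. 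This comparable sub-case is the main obstacle: the hypothesis $\mathrm{Supp}(S/R) \subseteq \mathrm{Max}(R)$ is precisely what provides the incomparability of the contracted crucial ideals needed to feed crosswise exchange.

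For $(2)\Rightarrow(1)$, the set $]R,S[$ is nonempty since a minimal $R \subset S$ would force $|\mathrm{Supp}(S/R)| = 1$. Any $T \in ]R,S[$ has $T_{M_i} \in \{R_{M_i}, S_{M_i}\}$ by local minimality, and excluding $T = R$ and $T = S$ I may assume $T_{M_1} = S_{M_1}$, $T_{M_2} = R_{M_2}$. Any $T^* \in [R, T]$ then has $T^*_{M_2} = R_{M_2}$ and $T^*_{M_1} \in \{R_{M_1}, S_{M_1}\}$, so $T^* \in \{R, T\}$ by the local-global determination of $R$-submodules of $S$; hence $R \subset T$ is minimal, $R \subset S$ is a minimal pair, and $\ell[R,S] = 2$ by Theorem~\ref{2.1}. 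Finally, Proposition~\ref{2.2}(1) gives that $R \subset S$ is pointwise minimal or simple, but pointwise minimal extensions admit a crucial ideal (Theorem~\ref{1.2}), contradicting $|\mathrm{Supp}(S/R)| = 2$; hence $R \subset S$ is simple.
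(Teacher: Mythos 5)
Your proof is correct. It covers the same ground as the paper's but is organized differently and avoids one of the paper's auxiliary results. The paper runs the single cycle $(1)\Rightarrow(2)\Rightarrow(3)\Rightarrow(1)$: it obtains $(1)\Rightarrow(2)$, and the lower bound $|[R,S]|\geq 4$ in $(2)\Rightarrow(3)$, from Lemma~\ref{1.15} (existence, in an FMC extension, of a minimal subextension with prescribed crucial ideal); it gets the upper bound $|[R,S]|\leq 4$ from the same local--global count of localizations that you use in your $(2)\Rightarrow(1)$ step; and it proves $(3)\Rightarrow(1)$ exactly as you do, by applying crosswise exchange to a comparable pair in $]R,S[$. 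You instead prove four implications radiating from $(1)$ and never invoke Lemma~\ref{1.15}: in $(1)\Rightarrow(3)$ you manufacture the second intermediate ring directly by crosswise exchange (Lemma~\ref{1.13}) applied to a maximal chain $R\subset T\subset S$, using Proposition~\ref{1.14} and the hypothesis $\mathrm{Supp}(S/R)\subseteq\mathrm{Max}(R)$ to get the required incomparability $\mathcal{C}(T,S)\cap R\not\subseteq\mathcal{C}(R,T)$; and in $(2)\Rightarrow(1)$ you show directly that local minimality forces every $T\in]R,S[$ to be minimal over $R$, so that $R\subset S$ is a minimal pair in the sense of Theorem~\ref{2.1}. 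The trade-off is that the paper's route is one implication shorter, while yours is more self-contained (crosswise exchange does all the constructive work) and makes visible that the maximality hypothesis on the support is exactly what feeds the exchange lemma. The deduction of simplicity from Proposition~\ref{2.2} and Theorem~\ref{1.2} is the same as the paper's.
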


\begin{proof} Set $\mathrm{Supp}(S/R):=\{M_1,M_2\}$.  

(1) $\Rightarrow$ (2) By   (Lemma ~\ref{1.15}), there exists $T_i\in]R,S[$ such that $\mathcal{C}(R,T_i)=M_i$ for $i=1,2$. Of course, $T_1\neq T_2$ with $R\subset T_i$ and $T_i\subset S$ minimal for $i=1,2$. Since $(T_i)_{M_j}=R_{M_j}$, it follows that $(T_i)_{M_j}\neq S_{M_j}$ for $i\neq j$, giving that $R_{M_j}\subset S_{M_j}$ is minimal for $j=1,2$, whence (2) holds.

(2) $\Rightarrow$ (3) Let $T\in]R,S[$. Then, $T_P=R_P=S_P$ for each $P\in\mathrm{Spec}(R)\setminus\{M_1,M_2\}$. Let $i\in\{1,2\}$. Since $R_{M_i}\subset S_{M_i}$ is minimal, we get either $T_{M_i}=R_{M_i}\ (*)$, or  $T_{M_i}\neq R_{M_i}\ (**)$, and in this case,  $T_{M_i}=S_{M_i}$.  In case $(*)$, we cannot have $T_{M_j}=R_{M_j}$, so that $T_{M_j}=S_{M_j}$. Then, there is at most one $T\in]R,S[$ satisfying $(*)$, and, in the same way, there is at most one $T'\in]R,S[$ satisfying $(**)$. Hence, $|[R,S]|\leq 4$. In particular, $R\subset S$ has FMC and, using (Lemma~\ref{1.15}), there exist $T_i\in]R,S[$ such that $\mathcal{C}(R,T_i)=M_i$ for $i=1,2$, so that $|[R,S]|= 4$.  

(3) $\Rightarrow$ (1) Assume  that $|[R,S]|=4$ and set $[R,S]=\{R,T,T',S\}$. Since $R\subset T,T'\subset S$, either $T$ and $T'$ are comparable, or they are incomparable. In this last case there are only two maximal chains  of length 2 from $R$ to $S$,  because $|[R,S]| =4$. If  $T$ and $T'$ are comparable, we have a chain of length 3, containing necessarily two minimal extensions $R_1\subset R_2\subset R_3$, with $R_1,R_2,R_3\in[R,S]$ such that $N_1:=\mathcal{C}(R_1,R_2)$ and $N_2=\mathcal{C}(R_2,R_3)$,  satisfying (for instance) $M_i=N_i\cap R$ for $i=1,2$. We claim that $N_2\cap R_1\not\subseteq N_1$. Deny, then  $N_2\cap R\subseteq N_1\cap R$ entails that $M_1$ and $M_2$ are comparable, a contradiction. Using again the Crosswise Exchange Lemma, we get that there is some $R'_2\in [R_1,R_3]$ such that $R_1\subset R'_2$ and $ R'_2\subset R_3$ are minimal  with $R'_2\neq R_2$, so that $|[R,S]|>4$, a contradiction. Therefore, $\ell[R,S]=2$.
\end{proof}

\begin{proposition}\label{3.2} Let $R\subset S$ be a length 2 extension. Assume  that  $|\mathrm{Supp} (S/R)|= 2$ and $\mathrm{Supp} (S/R)\not\subseteq \mathrm{Max} (R)$. Then, $R\subset S$ is a Pr\"ufer  extension, $|[R,S]|=3$ and $R\subset S$ is a simple extension.
\end{proposition}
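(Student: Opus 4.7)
The plan is to prove the three claims—Pr\"uferness, $|[R,S]|=3$, and simpleness—in this order. Stability of supports under specialization combined with the hypothesis lets me write $\mathrm{Supp}(S/R)=\{P,M\}$ with $M\in\mathrm{Max}(R)$, $P\notin\mathrm{Max}(R)$, and $P\subset M$.

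\textbf{Pr\"uferness.} I would apply the trichotomy in Proposition~\ref{2.2}(2): either $\overline R=R$, or $\overline R=S$, or $R\subset\overline R\subset S$ with both steps minimal. The last two options will be ruled out because each forces $\mathrm{Supp}(S/R)\subseteq\mathrm{Max}(R)$. If $\overline R=S$, then for any $T\in\,]R,S[$ both $R\subset T$ and $T\subset S$ are minimal integral (Theorem~\ref{1.3}), so their crucial ideals are maximal, and Proposition~\ref{2.3} together with the fact that integral extensions preserve maximality under contraction forces $\mathrm{Supp}(S/R)\subseteq\mathrm{Max}(R)$. If $R\subset\overline R\subset S$, the integral minimal step supplies a maximal crucial ideal $M_1\in\mathrm{Max}(R)$, while the Pr\"ufer minimal step $\overline R\subset S$ has crucial ideal $N\in\mathrm{Max}(\overline R)$ whose contraction to $R$ is maximal by integrality of $R\subset\overline R$; Proposition~\ref{1.14} then delivers $\mathrm{Supp}(S/R)=\{M_1,N\cap R\}\subseteq\mathrm{Max}(R)$. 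Both cases contradict $P\notin\mathrm{Max}(R)$, so $\overline R=R$ and the quasi-Pr\"ufer extension is Pr\"ufer.

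\textbf{Cardinality.} Existence of at least one $T_0\in\,]R,S[$ is immediate from $\ell[R,S]=2$. For every $T\in\,]R,S[$, minimality of $R\subset T$ gives $\mathrm{Supp}(T/R)=\{\mathcal{C}(R,T)\}$, and $\mathcal{C}(R,T)\in\mathrm{Max}(R)\cap\mathrm{Supp}(S/R)=\{M\}$, so $T_P=R_P$. Suppose, for contradiction, that $T,T_0\in\,]R,S[$ are distinct. Then $T\not\subseteq T_0$ (otherwise $T\in[R,T_0]=\{R,T_0\}$ would force $T=T_0$), hence $T_0\subset TT_0$, and minimality of $T_0\subset S$ yields $TT_0=S$. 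Localizing at $P$ and using that formation of the subring generated by $T\cup T_0$ inside $S$ commutes with localization,
\[
S_P=(TT_0)_P=T_P\cdot(T_0)_P=R_P\cdot R_P=R_P,
\]
contradicting $P\in\mathrm{Supp}(S/R)$. Hence $T_0$ is unique and $|[R,S]|=3$.

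\textbf{Simpleness and main obstacle.} For simpleness, pick any $x\in S\setminus T_0$; then $R[x]\in[R,S]=\{R,T_0,S\}$ cannot equal $R$ (as $x\notin R$) nor $T_0$ (as $x\notin T_0$), so $R[x]=S$. The most delicate point I anticipate is the localization identity $(TT_0)_P=T_P\cdot(T_0)_P$ inside $S_P$; this elementary-looking fact is what converts the non-maximality of $P$ into rigidity of the intermediate ring. The remainder is bookkeeping with crucial ideals, the minimal-extension classification of Theorem~\ref{1.3}, and the quasi-Pr\"ufer trichotomy of Proposition~\ref{2.2}.
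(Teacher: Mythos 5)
Your proof is correct. The Pr\"uferness step follows the paper's own route: the same quasi-Pr\"ufer trichotomy from Proposition~\ref{2.2}(2), with the two non-Pr\"ufer branches killed by showing each forces $\mathrm{Supp}(S/R)\subseteq\mathrm{Max}(R)$ (the paper cites \cite[Theorem 3.6(b)]{DPP2} for the integral case where you argue via Proposition~\ref{1.14} and lying-over, but the substance is identical). Where you genuinely diverge is the cardinality count. The paper localizes at the maximal ideal $M$, invokes the poset isomorphism $[R,S]\to[R_M,S_M]$ from \cite[Lemma 3.5, Theorem 3.6]{DPP2}, and then quotes \cite[Theorem 6.10(b)]{DPP2} (the set of intermediate rings of a Pr\"ufer extension of a local ring is a chain) to conclude $|[R,S]|=3$. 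You instead give a self-contained argument: every $T\in\,]R,S[$ has crucial ideal $M$, hence $T_P=R_P$; two distinct intermediate rings would have compositum $S$ by minimality of $T_0\subset S$, yet the compositum localizes to $R_P$ at $P$, contradicting $P\in\mathrm{Supp}(S/R)$. The key identity $(TT_0)_P=T_P\cdot(T_0)_P$ is legitimate since $TT_0$ is the image of $T\otimes_R T_0\to S$ and localization is exact. Your approach buys independence from the external machinery of \cite{DPP2} on normal pairs, at the cost of needing the compositum/localization argument; the paper's approach is shorter given that machinery and yields the chain structure of $[R,S]$ as a bonus. Your simpleness argument (any $x\in S\setminus T_0$ generates $S$, since $R[x]$ must be one of the three rings) is also a touch more direct than the paper's, which deduces simpleness from the observation that a two-point support rules out pointwise minimality via Theorem~\ref{1.2} and Proposition~\ref{2.2}(1). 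Both are valid.
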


\begin{proof} Set $\mathrm{Supp}(S/R)=\{M,P\}$. Since a support contains necessarily a maximal ideal, we can assume that $M\in\mathrm{Max}(R)$. Moreover, since $P\not\in\mathrm{Max}(R)$, we have $P\subset M$ because any maximal ideal of $R$ containing $P$ belongs to $\mathrm{Supp}(S/R)$. Observe that $R\subset S$ is quasi-Pr\"ufer  (Proposition~\ref{2.2}).  We have either $\overline R\in\{R,S\}$ or $R\subset \overline R$ and $\overline R\subset S$ both minimal. Assume  $\overline R=S$, then $R\subset S$ is an integral extension and $\mathrm{Supp}(S/R)\subseteq \mathrm{Max}(R)$   by \cite[Theorem 3.6(b)]{DPP2}, a contradiction. If $R\subset \overline R$ and $\overline R\subset S$ are both minimal, then $\mathcal{C}(\overline R,S)\in \mathrm{Max}(\overline R)$, giving again $\mathrm{Supp}(S/R)\subseteq \mathrm{Max}(R)$, still a contradiction. The only possible case is $\overline R=R$, so that $R\subset S$ is a Pr\"ufer  extension. 
Since the map $[R,S]\to[R_M,S_M]$ defined by $T\mapsto T_M$ is a poset isomorphism \cite[Lemma 3.5, Theorem 3.6]{DPP2},    we get that  $|[R,S]|=3$  \cite[Theorem 6.10(b)]{DPP2}.
\end{proof}

\begin{corollary}\label{3.23} Let $R\subset S$ be  a Pr\"ufer  extension. Then, $\ell[R,S]=2$ if and only if  $|\mathrm{Supp} (S/R)|= 2$, in which case $R\subset S$ is  simple.
\end{corollary}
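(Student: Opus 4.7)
The plan is to handle the two implications separately and then deduce simplicity.

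For the forward direction, assume $R\subset S$ is Pr\"ufer with $\ell[R,S]=2$. Proposition~\ref{2.3} already gives $|\mathrm{Supp}(S/R)|\leq 2$, so I only need to exclude $|\mathrm{Supp}(S/R)|=1$. If $\mathrm{Supp}(S/R)=\{M\}$, Lemma~\ref{1.15} (applicable because length $2$ forces FMC) yields $T\in ]R,S[$ with $R\subset T$ minimal and $\mathcal{C}(R,T)=M$; Theorem~\ref{2.1} then makes $T\subset S$ minimal as well, and both steps are Pr\"ufer minimal since the Pr\"ufer property passes to sub- and over-extensions. The first step gives $MT=T$ by the scholium quoted in Lemma~\ref{1.12}, so no prime of $T$ contracts to $M$; on the other hand $\mathcal{C}(T,S)\in\mathrm{Max}(T)$ must contract into $\mathrm{Supp}(S/R)=\{M\}$, a contradiction. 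The same argument in fact shows that any Pr\"ufer extension of length $\geq 2$ satisfies $|\mathrm{Supp}|\geq 2$.

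For the converse, assume $R\subset S$ is Pr\"ufer with $|\mathrm{Supp}(S/R)|=2$, and split according to whether $\mathrm{Supp}(S/R)\subseteq\mathrm{Max}(R)$. In case (a), $\mathrm{Supp}(S/R)=\{M_1,M_2\}\subseteq\mathrm{Max}(R)$: each localization $R_{M_i}\subset S_{M_i}$ is Pr\"ufer over the local ring $R_{M_i}$ with support the single maximal ideal $M_iR_{M_i}$, and the strengthened forward conclusion just derived forces length $1$, i.e.\ Pr\"ufer minimality. Thus $R\subset S$ is locally minimal and Proposition~\ref{3.1} gives $\ell[R,S]=2$. In case (b), $\mathrm{Supp}(S/R)=\{M,P\}$ with $P\subsetneq M$ non-maximal: the poset isomorphism $[R,S]\cong[R_M,S_M]$ from \cite[Lemma 3.5, Theorem 3.6]{DPP2} (as invoked in Proposition~\ref{3.2}) reduces to the case $R$ local. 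Then $R_P\subset S_P$ is Pr\"ufer with single-element support $\{PR_P\}$, hence Pr\"ufer minimal by the same argument; pulling back along the canonical $S\to S_P$ produces an intermediate ring $T\in ]R,S[$ for which $R\subset T$ and $T\subset S$ are both Pr\"ufer minimal, forcing $\ell[R,S]=2$ by Theorem~\ref{2.1}.

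Simplicity follows from Proposition~\ref{2.2}(1): a length-$2$ extension is either pointwise minimal or simple, and by Theorem~\ref{1.2} a pointwise minimal extension is integral or Pr\"ufer minimal. Neither alternative applies here, because the normal-pair characterization of Pr\"ufer extensions gives $\overline R=R\neq S$ (excluding integrality) while length $2$ excludes minimality. The main obstacle is case (b) of the converse: producing the global intermediate $T$ from the localized Pr\"ufer-minimal picture at $P$ is not a routine localization step and requires using the flat-epimorphism structure theory of Pr\"ufer extensions; by contrast, the forward direction and case (a) both reduce to the single clean local contradiction built around the identity $MT=T$.
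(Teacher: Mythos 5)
Your forward direction is sound and genuinely different from the paper's: since $\ell[R,S]=2$ gives FCP, hence FMC, Lemma~\ref{1.15} does apply, and the contradiction between $MT=T$ (Scholium A for the Pr\"ufer minimal step $R\subset T$) and $\mathcal{C}(T,S)\cap R=M$ (forced by Proposition~\ref{1.14}) replaces the paper's citation of \cite[Theorem 6.10 and Proposition 6.12]{DPP2}. The simplicity argument at the end is also correct and matches what Propositions~\ref{3.1} and~\ref{3.2} yield.

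The converse, however, has a genuine gap. Both of your cases rest on the ``strengthened forward conclusion'' that a Pr\"ufer extension with singleton support has length $1$, but your proof of that statement goes through Lemma~\ref{1.15}, which requires FMC. In the forward direction FMC came for free from the hypothesis $\ell[R,S]=2$; in the converse you apply the statement to $R_{M_i}\subset S_{M_i}$ (case (a)) and $R_P\subset S_P$ (case (b)), and at that stage nothing tells you these extensions have FMC, or even FCP --- that is essentially what is to be proved. The missing ingredient is the finiteness result that a Pr\"ufer extension with finite support has FCP/FIP: the paper's converse consists precisely of citing \cite[Proposition 6.9]{DPP2} for FIP and \cite[Proposition 6.12]{DPP2} to conclude $\ell[R,S]=|\mathrm{Supp}(S/R)|=2$ (the same kind of input, \cite[Proposition 1.3]{Pic 5}, powers Corollary~\ref{3.31}). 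Without it, case (a) invokes a lemma you have not proved in the needed generality, and case (b) is not completed at all: you acknowledge that producing the global intermediate ring from the local picture at $P$ ``requires the flat-epimorphism structure theory,'' but that is the step to be carried out, not a remark to be deferred. (The singleton-support claim is in fact provable without FMC --- a flat epimorphism $R\to T$ with $(R,M)$ local and $MT\neq T$ has spectral image stable under generization and containing $M$, hence is faithfully flat, hence an isomorphism, so every $T\in\,]R,S]$ satisfies $MT=T$ and one deduces $T=S$ --- but that is not the argument you gave.)
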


\begin{proof} Assume that $\ell[R,S]=2$, so that $|\mathrm{Supp} (S/R)|\leq  2$ by Proposition \ref{2.3}. If $|\mathrm{Supp} (S/R)|= 1$, we may assume that $(R,M)$ is a local ring with $\mathrm{Supp} (S/R)= \{M\}$. Then, $R\subset S$ is minimal by \cite[Theorem 6.10 and Proposition 6.12]{DPP2}, a contradiction. It follows that  $|\mathrm{Supp} (S/R)|= 2$.

Conversely, assume that $|\mathrm{Supp} (S/R)|= 2$. Since  $R\subset S$ is Pr\"ufer, it is a normal pair,  $R\subset S$ has FIP   \cite[Proposition 6.9]{DPP2} and then $\ell[R,S]=2$   \cite[Proposition 6.12]{DPP2}.
\end{proof}

An obvious example is a valuation domain $R$ of dimension 2 with maximal ideal $M$. Then, for $T:=R_P$,  where $\mathrm{Spec}(R)=\{P,M\}$ 
 and $S$ being the quotient field of $R$, we get that $|\mathrm{Supp} (S/R)|= 2$, so that $[R,S]=\{R,T,S\},\ \ell[R,S]=2$ and $|[R,S]|=3$. 

\section{$M$-crucial extensions of length 2} 

The remaining case occurs for $|\mathrm{Supp} (S/R)|= 1$, which  means that the extension is $M$-crucial. Next result shows that in the rest of the paper, we can reduce our proofs to the case a local ring.

\begin{proposition}\label{3.11}  An extension $R\subset S$  with $\mathrm{MSupp} (S/R)= \{M\}$ verifies: 

(1) The map $\varphi : [R,S]\to [R_M,S_M]$ defined by $T\mapsto T_M$ for any $T\in[R,S]$ is a poset isomorphism.  Therefore,  $\ell[R,S]=\ell[R_M,S_M]$ and $|[R,S]|=|[R_M,S_M]|$.

(2) Let $R=R_0\subset R_1\subset\cdots\subset R_{n-1}\subset R_n=S$ be a maximal chain (whence $R_i\subset R_{i+1}$ is  minimal  for each $i=0,\ldots,n-1$). Then so is $R_M\subset (R_1)_M\subset\cdots\subset (R_{n-1})_M\subset S_M$ and $(R_i)_M\subset (R_{i+1})_M$ is  minimal  of the same type as $R_i\subset R_{i+1}$ for each $i=0,\ldots,n-1$.

(3)   $R\subset S$ has FCP (resp. FIP) if and only if $R_M\subset S_M$ has FCP (resp. FIP).

(4) Assume that $R\subset S$ is an integral extension. Then, $\mathrm{Max}(S_M)=\{NR_M\mid N\in\mathrm{Max}(S),\ N\cap R=M\}=\{NR_M\mid N\in\mathrm{V}(MS)\}$. Moreover, if $R\subset S$ is  finite, then $(R_M:S_M)=(R:S)_M$.
\end{proposition}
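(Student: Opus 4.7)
The plan is to leverage the hypothesis $\mathrm{MSupp}(S/R) = \{M\}$, which forces $R_P = S_P$ for every $P \in \mathrm{Max}(R) \setminus \{M\}$, and hence $T_P = R_P = S_P$ for every $T \in [R,S]$ and every such $P$. In other words, any $T \in [R,S]$ is determined away from $M$. For part~(1), injectivity of $\varphi$ follows from the local-global principle for $R$-submodules of $S$: if $T_M = T'_M$, the combined localizations agree at every maximal ideal, which forces $T = T'$. For surjectivity, given $U \in [R_M,S_M]$, I set $T := \{s \in S : s/1 \in U\}$; this is an $R$-subalgebra of $S$ because $U$ is a subring of $S_M$ containing $R_M$. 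The inclusion $T_M \subseteq U$ is immediate, and the reverse follows by writing an arbitrary element of $U$ as $s/r$ with $r \in R \setminus M$ and observing that $s/1 = r(s/r) \in U$, so $s \in T$ and $s/r \in T_M$. Order-preservation in both directions is automatic, and the equalities of length and cardinality follow.

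Part~(2) is then an easy consequence: a maximal chain in $[R,S]$ maps under $\varphi$ to a chain of the same length in $[R_M,S_M]$ which, being the image of a maximal chain under a poset isomorphism, is itself maximal, so each $(R_i)_M \subset (R_{i+1})_M$ is minimal. Type preservation uses the inclusion $\mathrm{Supp}_R(R_{i+1}/R_i) \subseteq \mathrm{Supp}_R(S/R) = \{M\}$ (verified by localizing at a prime $P \neq M$ and noting both sides collapse to $S_P$), which forces $\mathcal{C}(R_i,R_{i+1}) \cap R = M$; localizing at $M$ then inverts only elements whose images are already units in the residue rings appearing in Theorem~\ref{1.3}, so the inert, decomposed, and ramified data are preserved; for the Pr\"ufer minimal type, flatness and the epimorphism property survive localization. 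Part~(3) is formal: FCP and FIP are poset-theoretic properties transferred by $\varphi$.

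For part~(4), integrality supplies lying-over and incomparability. A maximal ideal of $S_M$ corresponds to a prime $N \in \mathrm{Spec}(S)$ with $N \cap R \subseteq M$ that is maximal among such primes; going up provides $N' \supseteq N$ with $N' \cap R = M$, and incomparability forces $N = N'$, so $N \in V(MS)$ and $N$ is maximal in $S$. Conversely, $M$ being maximal forces every $N \in V(MS)$ to have $N \cap R = M$ and hence to be maximal in $S$ by incomparability, giving both descriptions of $\mathrm{Max}(S_M)$. For the conductor equality, I would use $(R:S) = \mathrm{Ann}_R(S/R)$ together with the fact that annihilators of finitely generated modules commute with localization, which applies because finiteness of $R \subset S$ makes $S/R$ a finitely generated $R$-module, yielding $(R:S)_M = \mathrm{Ann}_{R_M}(S_M/R_M) = (R_M:S_M)$. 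The principal obstacle is the careful verification of surjectivity in~(1); once the poset isomorphism is in hand, every subsequent assertion follows without serious trouble.
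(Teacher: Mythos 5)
Your proposal is correct in substance and follows the same route as the paper's (very terse) proof: the poset isomorphism in (1) is the engine, (2) and (3) are formal consequences plus a type-check, and (4) is lying-over/incomparability plus localization of conductors of finite extensions. Your explicit construction of the inverse of $\varphi$ via the pullback $T:=\{s\in S: s/1\in U\}$ is exactly the verification the paper waves away as ``obvious,'' and it is sound.

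One justification in (2) is off, though harmlessly so. The hypothesis is $\mathrm{MSupp}(S/R)=\{M\}$, not $\mathrm{Supp}(S/R)=\{M\}$: the support may contain non-maximal primes strictly below $M$ (the paper's own example of a two-dimensional valuation domain $R$ with $\mathrm{Supp}(S/R)=\{P,M\}$, $P\subset M$, shows this), so your parenthetical check ``localize at a prime $P\neq M$ and both sides collapse to $S_P$'' fails for such a $P$, and the claimed equality $\mathcal{C}(R_i,R_{i+1})\cap R=M$ is not forced --- only the containment $\mathcal{C}(R_i,R_{i+1})\cap R\subseteq M$ holds (this is all the paper asserts). Fortunately the containment is all you need: it guarantees that the crucial maximal ideal of $R_i\subset R_{i+1}$ survives localization at $M$, so the residue-field, conductor and spectral data of Theorem~\ref{1.3} (and the flat-epimorphism property in the Pr\"ufer case) are preserved. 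Similarly, in (4) the step ``incomparability forces $N=N'$'' is really just the maximality of $N$ among primes contracting into $M$, which you had already invoked; incomparability is what you need afterwards to see that $N\cap R=M$ implies $N\in\mathrm{Max}(S)$. With these two wordings repaired the argument is complete.
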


\begin{proof} 
(1)    
  $\varphi$ is obviously a poset isomorphism.  This gives the equalities for the lengths and cardinalities.

(2) Let $i\in\{0,\ldots,n-1\}$. Since $\mathrm{MSupp} (S/R)=\{M\}$, we get that $\mathcal{C}(R_i,R_{i+1})\cap R\subseteq M$, so that $M\in\mathrm{MSupp} (R_{i+1}/R_i)$ and $(R_i)_M\subset (R_{i+1})_M$ is  minimal    \cite[Lemme 1.3]{FO}. It is then enough to check the characterization of each type of minimal extensions to see that $(R_i)_M\subset (R_{i+1})_M$ is a minimal extension of the same type as $R_i\subset R_{i+1}$.

(3) is  obvious since $\varphi$ is bijective and (4) comes from properties of localizations.
\end{proof}

 In this section, we characterize  length 2  extensions that are $M$-crucial ({\it i.e} when  $\mathrm{Supp} (S/R)=\{M\}$). If a co-pointwise minimal extension is involved, we recall the ad hoc result. By  an exhaustive process,  we achieve  a characterization of   length 2 simple  $M$-crucial extensions.

\begin{proposition}\label{3.3} Let  $R\subset S$ be a non-integral  $M$-crucial extension. Then  the following conditions are equivalent:

(1) $\ell [R,S]= 2$.

(2)  $R\subset \overline R$ and $\overline R\subset S$ are minimal. 

(3)  $|[R,S]|=3$.

If the above equivalent conditions hold,  $R\subset S$ is  simple.  Moreover, $R\subset S$ is quasi-Pr\"ufer and not Pr\"ufer.
\end{proposition}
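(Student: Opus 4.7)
The plan is to establish the cycle $(1)\Rightarrow(2)\Rightarrow(3)\Rightarrow(1)$ together with the ``moreover'' clauses. For $(1)\Rightarrow(2)$, Proposition~\ref{2.2}(2) already forces $R\subset S$ to be quasi-Pr\"ufer and places $\overline R\in\{R,S\}$ or makes both $R\subset\overline R$ and $\overline R\subset S$ minimal. The non-integral hypothesis rules out $\overline R=S$, while $\overline R=R$ would make $R\subset S$ Pr\"ufer of length two, forcing $|\mathrm{Supp}(S/R)|=2$ by Corollary~\ref{3.23} and contradicting the $M$-crucial assumption. This step simultaneously validates the two ``moreover'' claims (quasi-Pr\"ufer but not Pr\"ufer).

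For $(2)\Rightarrow(3)$, I would reduce to $(R,M)$ local via Proposition~\ref{3.11}. For any $T\in[R,S]$ the pair $(T\cap\overline R,T\overline R)$ lies in $\{R,\overline R\}\times\{\overline R,S\}$. Three of the four possible pairs immediately force $T\in\{R,\overline R,S\}$: the pairs with first coordinate $\overline R$ give $\overline R\subseteq T\subseteq S$, while $(R,\overline R)$ gives $T\subseteq\overline R$ together with $T\cap\overline R=R$, so $T=R$. The remaining pair $(R,S)$ (call it Case~D) must be shown empty. Assuming a $T$ in Case~D and picking $t\in T\setminus R$, necessarily $t\notin\overline R$, and the same pair analysis puts the simple extension $R[t]$ again in Case~D. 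I then plan to show that (a suitable subextension of) $R\subset R[t]$ is actually a minimal extension; being non-integral (since its integral closure in itself is $R$), it is Pr\"ufer minimal by Theorem~\ref{1.2}, and its crucial ideal lies in $\mathrm{Supp}_R(R[t]/R)\subseteq\{M\}$, hence equals $M$. Applying Lemma~\ref{1.12} to $R\subset\overline R$ (finite minimal with crucial ideal $M$) and $R\subset R[t]$ (Pr\"ufer minimal with crucial ideal $M$) then contradicts the locality of $R$, killing Case~D and giving $|[R,S]|=3$.

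For $(3)\Rightarrow(1)$, the chain $R\subset\overline R\subset S$ of length two together with $|[R,S]|=3$ yields $\ell[R,S]=2$ at once. For the remaining ``moreover'' claim (simplicity), Proposition~\ref{2.2}(1) gives the dichotomy pointwise minimal versus simple. In the pointwise minimal case, any $t\in S\setminus\overline R$ (which exists because $\overline R\neq S$) yields $R\subset R[t]$ minimal and non-integral, hence Pr\"ufer minimal by Theorem~\ref{1.2}, clashing once more with the integral minimal $R\subset\overline R$ via Lemma~\ref{1.12} after localization. Therefore $R\subset S$ is simple.

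The principal technical obstacle I anticipate is precisely the minimality of $R\subset R[t]$ in Case~D: the elementary pair analysis only shows that any intermediate $R'\in{}]R,R[t][$ would itself lie in Case~D, so extracting a Pr\"ufer minimal subextension requires a descending argument controlling the chain of Case~D subrings. My strategy is to observe that the finiteness of $[R,\overline R]$ and $[\overline R,S]$ combined with the constraint ``Case~D'' propagates up the pair map and forces any strictly descending chain to stabilize, yielding a minimal Case~D element that then feeds into Lemma~\ref{1.12}. Once this descent step is secured, the rest of the proof is straightforward bookkeeping.
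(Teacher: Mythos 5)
Your overall architecture is sound and, in its key step, close in spirit to the paper's: both proofs exclude the problematic intermediate rings by producing a minimal subextension $R\subset T'$ that must be Pr\"ufer minimal with crucial ideal $M$ and then invoking Lemma~\ref{1.12} against the finite minimal $R\subset\overline R$. Your $(1)\Rightarrow(2)$ via Corollary~\ref{3.23}, your $(3)\Rightarrow(1)$, and your treatment of simplicity are all correct (for simplicity the paper instead observes that a co-pointwise minimal extension is integral by Proposition~\ref{2.4}, but your Lemma~\ref{1.12} argument works too).

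The genuine gap is in $(2)\Rightarrow(3)$, precisely at the ``principal technical obstacle'' you flag, and your proposed fix does not work. You need: every $T$ in Case~D contains some $T'\in\, ]R,S[$ with $R\subset T'$ minimal. Your justification is that ``the finiteness of $[R,\overline R]$ and $[\overline R,S]$ propagates up the pair map and forces any strictly descending chain to stabilize.'' But the pair map $T\mapsto (T\cap\overline R,\,T\overline R)$ is \emph{constant} on Case~D (every such $T$ is sent to $(R,S)$), so the finiteness of its target gives no control whatsoever over descending chains of Case~D elements; a priori such a chain could fail to stabilize, or stabilize at an intersection that is not attained, and nothing in your setup produces a minimal element. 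The correct repair is the one the paper uses: since $R\subset\overline R$ and $\overline R\subset S$ are both minimal, $R\subset S$ has FCP by \cite[Theorem 3.13]{DPP2}, and FCP guarantees that every $T\in\, ]R,S[$ contains a minimal subextension of $R$. Once that is supplied, your argument (the minimal $T'$ is in Case~D, hence $R$ is integrally closed in $T'$, hence $R\subset T'$ is Pr\"ufer minimal by Theorem~\ref{1.2}, hence Lemma~\ref{1.12} contradicts $M$-cruciality) closes Case~D and the rest is bookkeeping.
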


\begin{proof} (1) $\Rightarrow$ (2)  $R\subset S$ is not a Pr\"ufer  extension in view of \cite[Theorem 6.3 and Proposition 6.12]{DPP2}. It follows that $\overline R\neq R,S$ so that $R\subset \overline R$ and $\overline R\subset S$ are minimal extensions. 

(2) $\Rightarrow $ (3) $R\subset S$ has FCP in view of \cite[Theorem 3.13]{DPP2}. Assume that there exists $T\in]R,S[\setminus\{\overline R\}$.  Then, $T$ and $\overline R$ are not comparable, and we may assume that $R\subset T$  is  minimal. Since $R\subset \overline R$ is minimal, $R\subset T$ cannot be minimal integral and $R\subset T$ cannot be minimal Pr\"ufer  (Lemma~\ref{1.12}). Hence, we get a contradiction and $|[R,S]|=3$.

(3) $\Rightarrow $ (1) Obvious.

Moreover, $R\subset S$ is  simple beause of (Proposition~\ref{2.2}) and  the classification of co-pointwise minimal extensions of (Proposition~\ref{2.4}).
\end{proof}

\begin{corollary}\label{3.31}  Let  $R\subset S$ be a non-integral non-minimal quasi-Pr\"ufer $M$-crucial extension with $|\mathrm{Supp}_{\overline R}(S/\overline R)|=1$. Then, $\ell[R,S]=2$ if and only if $R\subset \overline R$ is minimal.  
\end{corollary}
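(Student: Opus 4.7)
The plan is to handle the two implications separately, with Proposition~\ref{3.3} doing most of the structural work.

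For the forward direction, if $\ell[R,S]=2$, then since $R\subset S$ is non-integral and $M$-crucial, Proposition~\ref{3.3} applies. Condition~(2) of that proposition directly says that $R\subset\overline R$ (and $\overline R\subset S$) is minimal, which is exactly what we want.

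For the converse, assume $R\subset\overline R$ is minimal. The goal is to verify the second half of condition~(2) of Proposition~\ref{3.3}, namely that $\overline R\subset S$ is also minimal; once that is in hand, the implication (2)$\Rightarrow$(1) of Proposition~\ref{3.3} gives $\ell[R,S]=2$. Because $R\subset S$ is quasi-Pr\"ufer, the extension $\overline R\subset S$ is Pr\"ufer, and it is non-trivial: indeed $R\subset S$ is non-integral, so $\overline R\neq S$. Write $\mathrm{Supp}_{\overline R}(S/\overline R)=\{N\}$; since supports are stable under specialization and every prime sits under a maximal ideal, the single element $N$ must be maximal in $\overline R$, so $\mathrm{MSupp}_{\overline R}(S/\overline R)=\{N\}$.

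At this point Proposition~\ref{3.11}\,(1) becomes available and provides a poset isomorphism $[\overline R,S]\cong[\overline R_N,S_N]$. The localized extension $\overline R_N\subset S_N$ is a non-trivial Pr\"ufer extension whose base is local and whose support is the maximal ideal $N\overline R_N$, so the same citation used in the proof of Corollary~\ref{3.23} (\cite[Theorem 6.10 and Proposition 6.12]{DPP2}) forces $\overline R_N\subset S_N$ to be minimal. Transporting this back through the poset isomorphism yields $\overline R\subset S$ minimal, and Proposition~\ref{3.3} then delivers $\ell[R,S]=2$. The only subtle point is verifying that Proposition~\ref{3.11} is indeed applicable to $\overline R\subset S$, which is why the argument that $N$ must be maximal has to be made explicit; everything else is a routine assembly of results already established in the paper.
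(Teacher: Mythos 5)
Your proof is correct and takes essentially the same route as the paper: the entire content of the statement is that, under the hypotheses, $\overline R\subset S$ is automatically minimal, after which Proposition~\ref{3.3} yields the equivalence. The only divergence is in how that minimality is obtained --- the paper argues globally ($\overline R\subset S$ is Pr\"ufer with finite support, hence has FCP by \cite[Proposition 1.3]{Pic 5}, hence FIP, hence is minimal by \cite[Propositions 6.9 and 6.12]{DPP2}), whereas you localize at the unique support prime and invoke the local normal-pair results as in Corollary~\ref{3.23}; since those results from \cite{DPP2} are stated for FCP/FIP extensions, you should first record that $\overline R\subset S$ has FCP, which is immediate from \cite[Proposition 1.3]{Pic 5} because it is Pr\"ufer with finite support.
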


\begin{proof} Since $R\subset S$ is $M$-crucial, $\mathrm{Supp}(S/R)=\{M\}$. Moreover, $S\neq\overline R$, so that $\overline R\subset S$ is Pr\"ufer with $\mathrm{Supp}_{\overline R}(S/\overline R)$ finite. Then, $\overline R\subset S$ has FCP  \cite[Proposition 1.3]{Pic 5}. It follows that $\overline R\subset S$ has  FIP  \cite[Proposition 6.9]{DPP2} and  is minimal in view of \cite[Proposition 6.12]{DPP2}. The equivalence is then obvious. 
\end{proof}

Here is an explicit example \cite[Example 1, page 376]{MP}. It is enough to choose an order of algebraic integers $R$ such that $R\subset T$ is minimal inert with conductor $M=Rp$, where $T$ is the integral closure of $R$ and a PID. Setting $S:=\overline R_p$, we get that $T=\overline R$ is such that $R\subset\overline R$ and $\overline R\subset S$ are minimal, $R\subset S$ is a non-integral FCP $M$-crucial extension such that $|\mathrm{Supp}_{\overline R}(S/\overline R)|=1$ since $\mathrm{Supp}(S/\overline R)=\{M\}$. Then, $\ell[R,S]=2$.  Take for instance $R:=\mathbb {Z}[5\sqrt{-2}],\ \overline R=\mathbb {Z}[\sqrt{-2}]$ and $S:=(\overline R)_5$.

The last case to consider, which will be the more complicated, is when $R\subset S$ is an integral $M$-crucial extension of length 2.
 To get a  characterization of such extensions, we need the following recalls. 

\begin{definition}\label{3.4} An integral extension $R\subseteq S$ is called {\it infra-integral} \cite{Pic 2} (resp$.$; {\it subintegral} \cite{S}) if all its residual extensions $R_P/PR_P\to S_Q/QS_Q$, (with $Q\in\mathrm{Spec}(S)$ and $P:=Q\cap R$) are isomorphisms (resp$.$; and the spectral map $\mathrm{Spec}(S)\to\mathrm{Spec}(R)$ is bijective). An extension $R\subseteq S$ is called {\it t-closed} (cf. \cite{Pic 2}) if the relations $b\in S,\ r\in R,\ b^2-rb\in R,\ b^3-rb^2\in R$ imply $b\in R$. The $t$-{\it closure} ${}_S^tR$ of $R$ in $S$ is the smallest element $B\in[R,S]$ such that $B\subseteq S$ is t-closed and the greatest element $B'\in[R,S]$ such that $R\subseteq B'$ is infra-integral. An extension $R\subseteq S$ is called {\it seminormal} (cf. \cite{S}) if the relations $b\in S,\ b^2\in R,\ b^3\in R$ imply $b\in R$. These two properties are stable under the formation of  localizations. The {\it seminormalization} ${}_S^+R$ of $R$ in $S$ is the smallest element $B\in[R,S]$ such that $B\subseteq S$ is seminormal and the greatest  element $ B'\in[R,S]$ such that $R\subseteq B'$ is subintegral.

The canonical decomposition of an arbitrary ring extension $R\subset S$ is $R \subseteq {}_S^+R\subseteq {}_S^tR \subseteq \overline R \subseteq S$.
\end{definition}

 Next proposition gives the link between the elements of the canonical decomposition and minimal extensions.

\begin{proposition}\label{3.5} \cite[Lemma 3.1]{Pic 4} Let there be an integral extension $R\subset S$ and a maximal chain $\mathcal C$ of $ R$-subextensions of $S$, defined by $R=R_0\subset\cdots\subset R_i\subset\cdots\subset R _n= S$, where each $R_i\subset R_{i+1}$ is  minimal.  The following statements hold: 

\begin{enumerate}
\item $R\subset S$ is subintegral if and only if each $R_i\subset R_{i+1}$ is  ramified. 

\item $R\subset S$ is seminormal and infra-integral if and only if each  $R_i\subset R_{i+1}$ is decomposed. 

\item $R \subset S$ is t-closed if and only if  each $R_i\subset R_{i+1}$ is inert. 
\end{enumerate}
\end{proposition}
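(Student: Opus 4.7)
The plan is to prove each of (1), (2), (3) by combining two ingredients: (i) a direct identification of the integral property of each of the three types of minimal integral extensions, and (ii) stability of these properties along towers. For (i), Theorem~\ref{1.3} shows that a minimal ramified extension is subintegral (unique prime above the conductor with isomorphic residue field), a minimal decomposed extension is seminormal and infra-integral (isomorphic residues, together with a short check using $q^2-q\in M$ for seminormality), and a minimal inert extension is t-closed (the nontrivial residue field extension forces any element satisfying the t-closure relations to already lie in $R$). For (ii), each of subintegrality, infra-integrality, and t-closedness composes along towers and is inherited by intermediate sub-extensions: bijective spectral maps and isomorphic residue maps factor through any intermediate ring, and the $t$-closure polynomial relations are stable under both restriction and composition. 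Seminormality of $R\subseteq S$ also passes to $R\subseteq T$ for every $T\in[R,S]$ directly from the defining condition $b^2,b^3\in R\Rightarrow b\in R$.

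With these two ingredients, the reverse direction of each part is immediate: if every $R_i\subset R_{i+1}$ is of the specified type, it has the corresponding integral property by (i), and telescoping along the chain via (ii) yields the same property for $R\subset S$. For the forward direction, assume $R\subset S$ has the specified integral property. By (ii), each $R_i\subseteq R_{i+1}$ inherits it, and then (i) in the minimal case forces the step to be of the unique compatible type: ramified in case (1), since decomposed and inert are excluded by bijective spectra and by isomorphic residues respectively; inert in case (3), since both ramified and decomposed are infra-integral and would contradict t-closedness; and decomposed in case (2), since inert is excluded by infra-integrality while ramified is ruled out by seminormality.

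The main obstacle is the exclusion of ramified steps in (2). A ramified $R_i\subset R_{i+1}$ would exhibit an element $q\in R_{i+1}\setminus R_i$ with $q^2,q^3\in R_i$ (from $q^2\in M$ and $Mq\subseteq M$), making $R_i\subset R_{i+1}$ a proper subintegral extension. To contradict the seminormality of $R\subset S$, I would pass to the canonical decomposition $R\subseteq {}_S^+R\subseteq {}_S^tR\subseteq \overline R=S$: seminormality gives ${}_S^+R=R$ and infra-integrality gives ${}_S^tR=S$, so the entire chain sits in the middle seminormal infra-integral layer; a ramified step, being subintegral, would enlarge the seminormalization, contradicting ${}_S^+R=R$. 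The technical point behind this is that within an infra-integral extension the seminormalization is detected chain-wise by the presence or absence of ramified steps, a statement that reduces well to the local case at the crucial ideal via Proposition~\ref{3.11}.
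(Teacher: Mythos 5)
Your reverse implications are sound: each of the three minimal types does have the indicated property (ramified is subintegral, decomposed is seminormal and infra-integral, inert is t-closed), and all four properties are stable under juxtaposition, so telescoping along the chain works. The forward direction of (1) is also fine, since bijectivity of the spectral map and the isomorphy of residue extensions factor through every intermediate link. The trouble is in the forward directions of (2) and (3), where you need each individual link $R_i\subset R_{i+1}$ --- not just $R\subseteq R_{i+1}$ --- to inherit the property from $R\subset S$. For seminormality and t-closedness the defining relations quantify over elements of the \emph{bottom} ring of the link ($b^2,b^3\in R_i$, resp.\ a coefficient $r\in R_i$), which for $i>0$ need not lie in $R$; so ``restriction of the polynomial relations'' only yields heredity to $R\subseteq T$, not to $R_i\subseteq R_{i+1}$. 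Your attempted repair for (2) is where the real gap sits: a ramified link at position $i>0$ exhibits $q$ with $q^2,q^3\in R_i$ but not in $R$, so it says nothing directly about ${}_S^+R$, and the ``technical point'' you invoke --- that within an infra-integral extension the triviality of ${}_S^+R$ is detected by the absence of ramified steps in a maximal chain --- is precisely statement (2) (given (1) and the ramified/decomposed dichotomy). As written the argument is circular. (Also, the appeal to Proposition~\ref{3.11} does not apply verbatim, since $\mathrm{MSupp}(S/R)$ need not be a singleton here.)

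For comparison, the paper does not reprove the dichotomy at all: it cites \cite[Lemma 3.1]{Pic 4} for (3) and for ``infra-integral iff every step is ramified or decomposed'', and then separates the two infra-integral cases by invariants of the \emph{whole} extension that are visible at every level of the chain: bijectivity of the spectral map excludes decomposed steps in (1), and the fact that the conductor of a seminormal infra-integral (FCP) extension is a finite intersection of maximal ideals of the top ring (the generalization of \cite[Proposition 4.9]{DPP2}) excludes ramified steps in (2), because a ramified link has ${M'}^2\subseteq M\subsetneq M'$ and hence a non-radical conductor. To make your self-contained route work you would need to actually prove either the heredity of seminormality and t-closedness to arbitrary subintervals of $[R,S]$, or a chain-rearrangement statement (crosswise exchange, or a \cite[Lemma 17]{DPP4}-type argument) that pushes a ramified step down to the bottom of the chain, where it does contradict seminormality of $R\subset S$ directly.
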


\begin{proof}  \cite[Lemma 3.1]{Pic 4} asserts  that (3) holds and gives the infra-integral part of (1) and (2).
 Now (1) is clear since  we deal with a bijective  spectral map. If $R\subset S$ is seminormal, 
  $(R:S)$ is a finite intersection of  maximal ideals of $S$ (resp. $R_{i+1}$)  by an easy generalization of \cite[Proposition 4.9]{DPP2}, giving (2). 
\end{proof} 

\begin{definition}\label{4.5} Let $R\subset S$ be an integral extension of length 2. The tower $R\subseteq {}_S^+R\subseteq {}_S^tR\subseteq S$ shows  that $|\{R,{}_S^+R,{}_S^tR,S\}|\leq 3$, so that at least two of these elements are equal. This gives the six following cases:

(a) $R={}_S^+R={}_S^tR\neq S$.  \, \, \,  (b) $R={}_S^+R\neq{}_S^tR\neq S$.

(c) $R={}_S^+R\neq{}_S^tR= S$.  \, \, \,  (d) $R\neq{}_S^+R={}_S^tR\neq S$.

(e) $R\neq{}_S^+R\neq{}_S^tR= S$. \; \, \,  (f) $R\neq{}_S^+R={}_S^tR =S$.
\end{definition} 

(Proposition~\ref{2.4}) recalls that there are three types of co-pointwise minimal extension $R\subseteq S$: In (1) $R\subset S$ is seminormal and infra-integral ($R={}_S^+R$ and $S={}_S^tR$): case (c), in (2) $R\subset S$ is inert ($R={}_S^+R={}_S^tR$): case (a)  and in (3) $R\subset S$ is  subintegral ($S={}_S^+R={}_S^tR$): case (f). In particular, if $R\subset S$ is co-pointwise minimal, then $\{{}_S^+R,{}_S^tR\}\subseteq\{R,S\}$.  To consider integral extensions of length 2 which are not co-pointwise minimal, we begin to consider  the cases of an extension $R\subset S$ where either the seminormalization or the t-closure is different from $R$ and $S$, that is cases (b), (d) and (e). Such extensions are simple. 

We first  consider  cases (b) and (d) which give the same result.

 \begin{proposition}\label{3.6} Let $R\subset S$ be  an integral  $M$-crucial extension. The following are equivalent:

(1)   ${}_S^tR\neq R,S$ and  $\ell[R,S]=2$.

(2) $ R\subset {}_S^tR$ and ${}_S^tR \subset S$ are minimal extensions and $|[R,S]|=3$.

If these conditions hold, then $R\subset S$ is a simple extension.
\end{proposition}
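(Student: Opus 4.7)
The direction (2) $\Rightarrow$ (1) is immediate: the chain $R\subset {}_S^tR\subset S$ consisting of two minimal extensions ensures $\ell[R,S]\geq 2$, and $|[R,S]|=3$ caps every maximal chain at length $2$, so $\ell[R,S]=2$; moreover ${}_S^tR\neq R,S$ since ${}_S^tR\in ]R,S[$.

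For (1) $\Rightarrow$ (2), the first step is to combine $\ell[R,S]=2$ with ${}_S^tR\in ]R,S[$: Theorem~\ref{2.1} then yields that $R\subset {}_S^tR$ and ${}_S^tR\subset S$ are both minimal. Their types are pinned down by Proposition~\ref{3.5}: since $R\subseteq {}_S^tR$ is infra-integral by the very definition of the $t$-closure, this minimal extension must be decomposed or ramified; and since ${}_S^tR\subseteq S$ is $t$-closed by the same definition, this minimal extension must be inert.

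The heart of the argument is to establish $|[R,S]|=3$, that is, that any $T\in ]R,S[$ coincides with ${}_S^tR$. Theorem~\ref{2.1} once more makes $R\subset T$ and $T\subset S$ minimal, and I would split cases on the type of $R\subset T$. If $R\subset T$ is infra-integral (decomposed or ramified), then $T\subseteq {}_S^tR$ because ${}_S^tR$ is the greatest $B'\in[R,S]$ with $R\subseteq B'$ infra-integral; combined with the minimality of $R\subset {}_S^tR$ this forces $T={}_S^tR$. If instead $R\subset T$ is inert, the sub-case where $T\subset S$ is also inert makes the full chain inert, so Proposition~\ref{3.5}(3) renders $R\subseteq S$ itself $t$-closed and hence ${}_S^tR=R$, contradicting ${}_S^tR\in ]R,S[$. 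The remaining sub-case, $R\subset T$ inert with $T\subset S$ infra-integral, is the main obstacle: the minimality hypotheses force $T\cap {}_S^tR=R$ and $T\cdot {}_S^tR=S$, and I would reduce to the local case $(R,M)$ via Proposition~\ref{3.11} and use the explicit generators from Theorem~\ref{1.3} for the minimal extensions $R\subset T$ and $R\subset {}_S^tR$ to exhibit a further intermediate ring (built from a suitable product of these generators), producing a chain of length exceeding $2$ and contradicting $\ell[R,S]=2$.

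The final assertion that $R\subset S$ is simple follows from Propositions~\ref{2.2} and~\ref{2.4}: the hypothesis ${}_S^tR\in ]R,S[$ excludes $R\subset S$ being co-pointwise minimal, since in each of the three mutually exclusive co-pointwise-minimal cases of Proposition~\ref{2.4} one has $\{{}_S^+R,{}_S^tR\}\subseteq\{R,S\}$; Proposition~\ref{2.2} then leaves simplicity as the only alternative.
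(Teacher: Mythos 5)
Your proposal follows essentially the same route as the paper: (2)$\Rightarrow$(1) is immediate; for (1)$\Rightarrow$(2) the paper likewise pins down the types of $R\subset{}_S^tR$ and ${}_S^tR\subset S$ via the canonical decomposition, rules out any extra $T\in\,]R,S[$ of infra-integral type by maximality of the $t$-closure, observes that a surviving $T$ must be minimal inert with $S=T\cdot{}_S^tR$, and derives the contradiction $\ell[R,S]>2$; the simplicity claim is obtained exactly as you do, from Propositions~\ref{2.2} and~\ref{2.4}. The one point to flag is that your treatment of the crux --- the configuration where $R\subset T$ is inert and $S$ is the composite of $T$ with the infra-integral piece ${}_S^tR$ --- is only a sketch: ``exhibit a further intermediate ring built from a suitable product of the generators'' is the right idea (e.g.\ adjoining the product of the nilpotent/idempotent generator $q$ of ${}_S^tR$ from Theorem~\ref{1.3} with a lift of a generator of the residual field extension of $T$), but verifying that this element really produces a chain of length $3$ requires checking several non-memberships and handling the ramified and decomposed cases separately; note also that Theorem~\ref{1.3} gives no explicit generator normal form in the inert case. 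This is precisely the content the paper delegates to \cite[Propositions 7.1 and 7.4]{DPPS} on composites of an inert minimal extension with a ramified (resp.\ decomposed) one, so to make your argument self-contained you would need either to cite those results or to carry out that construction in detail.
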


\begin{proof} (1) $\Rightarrow$ (2)  Assume that ${}_S^tR\neq R,S$ and $\ell[R,S]=2$. The first assertion is obvious. In particular, $R\subset{}_S^tR$ is a minimal infra-integral extension and ${}_S^tR\subset S$ is a minimal inert extension. Assume that there exists $T\in]R,S[\setminus\{{}_S^tR\}$, so that $R\subset T$ is a minimal extension. Since $R\subset{}_S^tR$ is minimal, $R\subset T$ cannot be minimal infra-integral, so that $R\subset T$ is minimal inert. Moreover, we get that $S=T({}_S^tR)$. Then, an appeal to \cite[Propositions 7.1 and 7.4]{DPPS} shows that $\ell[R,S]>2$, a contradiction. To conclude, $|[R,S]|=3$.

(2) $\Rightarrow$ (1) is obvious.

Finally, (Proposition \ref{2.4}) shows that $R\subset S$ cannot be a co-pointwise minimal extension, so that $R\subset S$ is simple. 
\end{proof}

\begin{corollary}\label{3.61}  Let $(R,M)$ be a local ring and $R\subset S$  an integral   $M$-crucial extension such that ${}_S^tR\neq R,S$ and ${}_S^tR\subset S$ is minimal.   Then, $\ell[R,S]=2$ if and only if one of the following condition holds:

\noindent $\mathrm{(1)}$  $|\mathrm {Max}(S)|=1$ and ${\mathrm L}_R(N/M)=1$ for $\mathrm {Max}(S):=\{N\}$. 

\noindent  $\mathrm{(2)}$   $|\mathrm  {Max}(S)|=2$ and $M$ is  an intersection of two maximal ideals of $S$.

If $R\subset S$ satisfies one of these conditions, then $|[R,S]|=3$. 
\end{corollary}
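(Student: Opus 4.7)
Let $B := {}_S^tR$. Since $B\neq R,S$ and $B \subset S$ is both minimal (hypothesis) and $t$-closed (by definition of $B$), Proposition~\ref{3.5} makes $B \subset S$ minimal inert, so the conductor $(B:S)\subseteq B$ is a common maximal ideal and the contraction map $\mathrm{Max}(S)\to\mathrm{Max}(B)$ is a bijection. By Proposition~\ref{3.6}, $\ell[R,S]=2$ is equivalent to $R \subset B$ being minimal, with $|[R,S]|=3$ following from the same \cite[Propositions 7.1, 7.4]{DPPS} argument used there. As $R \subset B$ is infra-integral (by definition of $B$), Theorem~\ref{1.3} restricts a minimal such extension to being ramified or decomposed; it thus remains to show that (1) characterizes the ramified case and (2) the decomposed case.

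\textbf{Case (1): ramified $\leftrightarrow$ $|\mathrm{Max}(S)|=1$ and $\mathrm{L}_R(N/M)=1$.}
If $R \subset B$ is ramified with maximal ideal $P$, then $P^2\subseteq M$ implies $(P/M)^2=0$ in the artinian algebra $B/M$; being a nilpotent maximal ideal, $P/M$ is the unique maximal of $B/M$, so $B$ is local and $|\mathrm{Max}(S)|=|\mathrm{Max}(B)|=1$. Writing $\mathrm{Max}(S)=\{N\}$, the conductor forces $N=(B:S)=P$, and the splitting $B/M\cong R/M\oplus P/M$ (with $\dim_{R/M}(P/M)=1$) gives $\mathrm{L}_R(N/M)=1$. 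Conversely, under (1), the inert bijection gives $B$ local with max $P=N$; $\mathrm{L}_R(P/M)=1$ forces $P/M\cong R/M$ as $R$-modules, hence $MP\subseteq M$ and, via $B=R+P$ (infra-integrality), $MB=M$. Writing $P=Ra+M$ yields $B=R[a]$ and a quadratic $a^2=r_0+r_1 a$ with $r_i\in R$; reducing modulo $P$ forces $r_0\in M$, so the $R/M$-algebra $B/M\cong (R/M)[X]/\bigl(X(X-\bar r_1)\bigr)$. Any $\bar r_1\ne 0$ would split $B/M$ into $R/M\times R/M$ by CRT, contradicting the locality of $B$; hence $\bar r_1=0$, $a^2\in M$, $P^2\subseteq M$, and Theorem~\ref{1.3}(c) gives $R\subset B$ minimal ramified.

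\textbf{Case (2): decomposed $\leftrightarrow$ $|\mathrm{Max}(S)|=2$ and $M=N_1\cap N_2$.}
If $R \subset B$ is decomposed with $\mathrm{Max}(B)=\{Q_1,Q_2\}$ and $Q_1\cap Q_2=M$, then $|\mathrm{Max}(S)|=2$; writing $\mathrm{Max}(S)=\{N_1,N_2\}$ with conductor $(B:S)=N_1\subseteq B$, the bijection yields $Q_1=N_1$, $Q_2=N_2\cap B$, and $N_1\subseteq B$ forces $N_1\cap N_2=Q_1\cap Q_2=M$. Conversely, under (2), setting $Q_i:=N_i\cap B$ and noting that one $N_i$ (the conductor) lies in $B$ gives $Q_1\cap Q_2=N_1\cap N_2=M$; in particular $M$ is a $B$-ideal and $MB=M$. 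Infra-integrality identifies $B/Q_i\cong R/M$, so by CRT $B/M\cong R/M\times R/M$. Lifting the idempotent $(1,0)$ to $e\in B\setminus R$ produces $B=R[e]$ with $e^2-e\in M$ and $Me\subseteq M$, whence Theorem~\ref{1.3}(b) gives $R\subset B$ minimal decomposed. \textbf{Main obstacle:} the converse in Case (1), where translating the $R$-length hypothesis $\mathrm{L}_R(N/M)=1$ into the ring-theoretic square-zero condition $P^2\subseteq M$ proceeds through the explicit quadratic relation for the generator $a$ and uses the locality of $B$ (inherited from $|\mathrm{Max}(S)|=1$) to rule out the splitting alternative for $B/M$.
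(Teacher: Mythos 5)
Your proof is correct and follows essentially the same route as the paper's: reduce via Proposition~\ref{3.6} (with the cited \cite{DPPS} propositions supplying $|[R,S]|=3$) to deciding whether $R\subset{}_S^tR$ is minimal ramified or minimal decomposed, then match the ramified case with condition (1) and the decomposed case with condition (2) using the bijection $\mathrm{Max}(S)\to\mathrm{Max}({}_S^tR)$ coming from the inert top step. The only divergence is in the converse of (1), where you verify minimal ramifiedness directly from Theorem~\ref{1.3}(c) by producing a generator $a$ with $a^2\in M$ and $Ma\subseteq M$ (using locality of ${}_S^tR$ to exclude the split quadratic), whereas the paper establishes $Mx\subseteq M$ by a unit argument and then invokes \cite[Theorem 4.2 and Lemma 5.4]{DPP2}; both are sound, and yours is the more self-contained.
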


\begin{proof} Setting $T:={}_S^tR$, we observe that $T\subset S$ is minimal inert and $R\subset T$ is infra-integral.

Assume that $\ell[R,S]=2$. Then $R\subset T$ is minimal either ramified or decomposed. Moreover, $R\subset S$ has FCP.

If $R\subset T$ is ramified,  $T$ and $S$ are local rings, so that $|\mathrm {Max}(S)|=1$. Let $N$ be the maximal ideal of $T$. Then, $N=(T:S)$ and is the maximal ideal of $S$. Moreover, ${\mathrm L}_R(N/M)=1$ by \cite[Lemma 5.4]{DPP2}.

If $R\subset T$ is decomposed, then $|\mathrm{Max}(T)|=2$. Let $\{M_1,M_2\}:=\mathrm{Max}(T)$. Then, $M=M_1\cap M_2=M_1M_2\ (*)$, and $(T:S)$ is one of the $M_i$. Assume that $(T:S)=M_1$ for instance, then $M_1$ is a maximal ideal of $S$, the other being $M'_2:=M_2S$  \cite[Lemma 2.4]{DPP2}. By $(*)$, we get that $M=M_1SM_2=M_1M'_2=M_1\cap M'_2$, which is a radical ideal in $S$.

Conversely, assume that $|\mathrm {Max}(S)|={\mathrm L}_R(N/M)=1$ for $\mathrm {Max}(S)=\{N\}$.  Since $T\subset S$ is minimal inert, we get that $|\mathrm {Max}(T)|=1$ and  $\mathrm {Max}(T)=\{N\}$ because $N=(T:S)$. But $R\subset T$ is  subintegral gives that $R/M\cong T/N$, whence $T=R+N\neq R$ since  ${\mathrm L}_R(N/M)=1$. Hence, $T=R+Rx$ for some $x\in N\setminus M$ such that $N=M+Rx$. So, we have $M\subseteq M+Mx\subseteq M+Rx=N$ which leads to either $M=M+Mx\ (*)$ or $M+Mx=M+Rx\ (**)$. In case $(*)$, we get that $Mx\subseteq M$ so that $M=(R:T)$. In case $(**)$, we get that $x\in M+Mx$, so that there exist $m,m'\in M$ such that $x=m+m'x$, which implies $(1-m')x=m\in M$. But $1-m'$ is a unit in $R$ so that $x\in M$, a contradiction.   It follows that only case $(*)$ holds. Now, $T=R+Rx$ and $R/M$ is Artinian give that $R\subset T$ has FCP by \cite[Theorem 4.2]{DPP2}. It follows that  $R\subset T$ is minimal ramified by \cite[Lemma 5.4]{DPP2}. Moreover, $|[R,S]|=3$, because there does not exist $T'\in]R,S[\setminus\{ T\}$  \cite[Proposition 7.4]{DPPS}. Then, use (Proposition~\ref{3.6}) to get $\ell[R,S]=2$.

Assume now that $|\mathrm{Max}(S)|=2$ and $M$ is an  intersection of two maximal ideals of $S$. Since $T\subset S$ is minimal inert, we get that $|\mathrm {Max}(T)|=2$. Then, $M$ is also an  intersection of two maximal ideals of $T$  and $M=M_1M_2=M_1\cap M_2$, where $\mathrm{Max}(T)=\{M_1,M_2\}$.  
Moreover, we infer from $R/M\cong T/M_i$, for $i=1,2$  that $T/M\cong T/M_1\times T/M_2\cong (R/M)^2$ and then $R/M\subset T/M$  is minimal decomposed, and so is $R\subset T$.  At last, $|[R,S]|=3$, because there does not exist $T'\in]R,S[\setminus\{T\}$ in view of \cite[Proposition 7.1]{DPPS}. Then, use (Proposition ~\ref{3.6}) to get $\ell[R,S]=2$.
\end{proof}

\begin{corollary}\label{3.62}  Let $R\subset S$ be an integral   $M$-crucial extension such that ${}_S^tR\neq R,S$ and ${}_S^tR\subset S$ is minimal.   Then, $\ell[R,S]=2$ if and only if one of the following condition holds:
\begin{enumerate}
\item $|\mathrm V(MS)|=1$ and ${\mathrm L}_R(N/M)=1$ for $\mathrm V(MS)=\{N\}$. 
\item $|\mathrm V(MS)|=2$ and $M= N_1\cap N_2$, where $N_1, N_2 \in \mathrm{Max}(S)$.

\end{enumerate} 

If $R\subset S$ satisfies one of these conditions, then $|[R,S]|=3$. 
\end{corollary}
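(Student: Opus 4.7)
The approach is to localize at $M$, apply Corollary~\ref{3.61} to the local extension $R_M\subset S_M$, and then translate the two conditions back to $R\subset S$ using Proposition~\ref{3.11}.

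First I would check that $R_M\subset S_M$ satisfies the hypotheses of Corollary~\ref{3.61}. Since $R\subset S$ is integral and $M$-crucial, $M\in\mathrm{Max}(R)$ and $\mathrm{MSupp}(S/R)=\{M\}$, so Proposition~\ref{3.11}(1) gives a length- and cardinality-preserving poset isomorphism $[R,S]\to[R_M,S_M]$ under which $R_M\subset S_M$ becomes integral and $MR_M$-crucial. The $t$-closure commutes with localization (from the two characterizations in Definition~\ref{3.4} together with the stated stability under localization), giving $({}_S^tR)_M={}_{S_M}^tR_M\neq R_M,S_M$. Applying Proposition~\ref{3.11}(1) to the extension ${}_S^tR\subset S$ (whose support equals $\{M\}$, since $S/{}_S^tR$ is a non-zero quotient of the $R$-module $S/R$) also yields that $({}_S^tR)_M\subset S_M$ is minimal.

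Next I would apply Corollary~\ref{3.61} to the local setup: $\ell[R_M,S_M]=2$ iff either $|\mathrm{Max}(S_M)|=1$ with $\mathrm{L}_{R_M}(N'/MR_M)=1$ for $\mathrm{Max}(S_M)=\{N'\}$, or $|\mathrm{Max}(S_M)|=2$ with $MR_M$ the intersection of the two maximal ideals of $S_M$. By Proposition~\ref{3.11}(4), $N\mapsto NR_M$ is a bijection $\mathrm{V}(MS)\to\mathrm{Max}(S_M)$, so the cardinality clauses transfer verbatim. For the length clause in~(1), $N/M$ is a vector space over the common residue field $R/M=R_M/MR_M$ with $(N/M)_M=NR_M/MR_M$, hence $\mathrm{L}_R(N/M)=\mathrm{L}_{R_M}(NR_M/MR_M)$. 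The final claim $|[R,S]|=3$ then follows from $|[R_M,S_M]|=3$ via Proposition~\ref{3.11}(1).

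The step I expect to be the main obstacle is the equivalence in~(2) between the global statement $M=N_1\cap N_2$ in $S$ and its local counterpart $MR_M=(N_1)R_M\cap(N_2)R_M$ in $S_M$, since equality of $R$-submodules of $S$ is not, in general, detectable after localizing at a single maximal ideal of $R$. I would resolve it by invoking Proposition~\ref{1.11}(2): since $R\subset S$ is integral and $M$-crucial, $\sqrt{(R:S)}=M$, so $M^nS\subseteq R$ for some $n\geq 1$. Consequently $M^n(N_1\cap N_2)\subseteq M^nS\cap N_1\cap N_2\subseteq R\cap N_1\cap N_2=M$, so the $R$-module $(N_1\cap N_2)/M$ is annihilated by $M^n$ and therefore equals its own localization at $M$. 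This yields the desired equivalence and completes the translation from Corollary~\ref{3.61}.
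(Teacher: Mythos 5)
Your proposal follows essentially the same route as the paper's proof: localize at $M$ via (Proposition~\ref{3.11}), apply (Corollary~\ref{3.61}) to $R_M\subset S_M$, and translate the two conditions back, which is exactly what the authors do (they invoke Northcott's localization formula for the length in case (1) and pass from $M_M=(N_1)_M\cap(N_2)_M$ to $M=N_1\cap N_2$ in case (2)). The one quibble is your step ``$\sqrt{(R:S)}=M$, so $M^nS\subseteq R$ for some $n$'', which is not automatic without finiteness; it does hold in the only direction where you use it (there $\ell[R,S]=2$ gives FCP, hence $R/(R:S)$ is Artinian local, so $M^n\subseteq(R:S)$), and it can be avoided altogether by observing that $(N_1\cap N_2)\cap R=M$, so $(N_1\cap N_2)/M$ embeds in $S/R$ and is therefore supported only at $M$, whence it vanishes as soon as its localization at $M$ does.
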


\begin{proof} Because of (Proposition~\ref{3.11}), we get that $\ell[R,S]=\ell[R_M,S_M]$, 
 $|[R,S]|=|[R_M,S_M]|$ and $\mathrm{Max}(S_M)=\{NR_M\mid N\in\mathrm{V}(MS)\}$, giving $|\mathrm{Max}(S_M)|=|\mathrm{V}(MS)|$. 

Assume that $|\mathrm{Max}(S_M)|=1$, so that there is a unique $N\in\mathrm{Max}(S)$ lying over $M$. In view of the localization formula of Northcott \cite[Theorem 12, page 166]{N}, we have ${\mathrm L}_R(N/M)={\mathrm L}_{R_M}(N_M/M_M)$.

Assume that $|\mathrm{Max}(S_M)|=2$. There exist $N_1,N_2\in\mathrm{Max}(S)$ lying over $M$, such that $\mathrm{Max}(S_M)=\{(N_1)_M,(N_2)_M\}$ which are lying over $M_M$. If $M_M$ is a radical ideal of $S_M$,  intersection of two ideals of $S$, 
then $M_M=(N_1)_M\cap(N_2)_M=(N_1\cap N_2)_M$, giving $M=N_1\cap N_2$.

It is now enough to translate the conditions of (Corollary~\ref{3.61})
\end{proof}

\begin{example} \label{3.622} We illustrate (Corollary~\ref{3.61}) with examples due to D. Dobbs and J. Shapiro. 

(1) \cite[Remark 3.4 (h)]{DS} Let $K\subset L$ be a minimal field extension of degree 2, so that there exists $y\in L$ such that $L=K[y]=K+Ky$. Set $S:=L[X]/(X^2)$ and let $x$ be the class of $X$ in $S$, so that $x^2=0$ and $S=L+Lx=K+Ky+Kx+Kxy$. Set $R:=K+Kx$ and $T:=K+Lx=K+Kx+Kxy=R+Rxy=R[xy]$ (because $x^2=0$). Since $K\subset R$ is obviously a minimal ramified extension with crucial maximal ideal $0$, it follows that $R$ is a local ring with maximal ideal $M=Kx$. But, $Mxy=Kx^2y=0\subseteq M$ and $(xy)^2=0$ give that $R\subset T$ is minimal ramified, so that $T$ is a local ring with maximal ideal $M':=M+Rxy=Kx+Kxy$. In the same way, $L\subset S$ is also a minimal ramified extension, so that $S$ is a local ring with maximal ideal $N:=Lx=Kx+Kxy=M'$. Now, $T/M'=T/N\cong K$ and $S/M'\cong L$ imply that $T\subset S$ is  minimal inert . Moreover, $R\subset S$ is  $M$-crucial. To end, ${\mathrm L}_R(N/M)=\dim_K((Kx+Kxy)/(Kx))=1$ gives that $\ell[R,S]=2$  and $|[R,S]|=3$ by 
(Corollary~\ref{3.61}(1)). In fact, $S=R[y]$. 

(2) \cite[Remark 3.4 (c)]{DS} Let $K\subset L$ be a minimal field extension. Set $R:=K,\ T:=K\times K$ and $S:=K\times L$. Then, $R\subset T$ is a minimal decomposed extension with crucial maximal ideal $M:=0$. Let $N:=K\times 0$, which is a maximal ideal of $T$ lying above $M$ and the conductor of $T\subset S$. Since $S/N\cong L$, we get that $N$ is also a maximal ideal of $S$. Let $N':=0\times K$, which is the other maximal ideal of $T$, so that $N\cap N'=NN'=0=M$. Now, $T\subset S$ is  minimal inert  because $T/N\cong K$. Then, ${}_S^tR=T$. The (only) maximal ideal of $S$ lying above $N'$ is $P:=0\times L$. It follows that $|\mathrm{Max}(S)|=2$. Moreover, $0=NP=N\cap P$ is a radical ideal of $S$, giving that $\ell[R,S]=2$ and $|[R,S]|=3$ by (Corollary~\ref{3.61}(2)). In fact, $S=R[y]$ where $y=(0,z)$ is such that $L=K[z]$.  

In the two previous examples, $R\subset S$ is  not  pointwise minimal  because $M\neq (R:S)$ in (1) and by \cite[Proposition 4.14]{Pic 7} in (2).

\end {example}

 The following tables summarize  characterizations  of ramified and decomposed extensions gotten in (Theorem~\ref{1.3}) and (Proposition~\ref{3.5}). In each table, the two lines of each column are equivalent:
 \vskip 0,2cm
 Table T$_1$: $R\subset S$ is a minimal extension with conductor $M$.

\vskip 0,2cm
 
  \centerline{\begin{tabular} {|c|c|}
 \hline
 $R\subset S$ ramified                            & $R\subset S$ decomposed \\
 \hline
 $\exists x\in S$ such that  $S=R[x]$,\    & $\exists x\in S$ such that $S=R[x]$,\\   
 $x^2\in M$ and $xM\subseteq M$         & $x^2-x\in M$ and $xM\subseteq M$ \\
  \hline
 \end{tabular}}
  \vskip 0,2cm
  
 Table T$_2$: $R\subset T$ and $T\subset S$ are minimal and $R\subset S$ is infra-integral}

\vskip 0,2cm
 
 \centerline{\begin{tabular} {|c|c|c|}
 \hline
$R\subset S$ subintegral & $R\subset S$ seminormal   & ${}_S^+R\neq R,S$ \\
\hline
$R\subset T$ ramified      & $R\subset T$ decomposed & $R\subset{}_S^+R$  ramified  \\
$T\subset S$ ramified      & $T\subset S$ decomposed & ${}_S^+R\subset S$  decomposed  \\
\hline
 \end{tabular}}
  \vskip 0,2cm

\begin{proposition}\label{3.623} Let $(R,M)$ be a local ring and $R\subset S$ an infra-integral  $M$-crucial extension and $N:=\mathrm{Rad} (S) $. Then, $\ell[R,S]= 2$   if and only if ${\mathrm L}_R(N/M) + |\mathrm {Max}(S)|= 3$.
\end{proposition}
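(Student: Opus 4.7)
The plan is to reduce both conditions to the single identity ${\mathrm L}_R(S/R)=2$. Write $k:=R/M$; since $R\subset S$ is infra-integral and $M$-crucial, every $N_i\in\mathrm{Max}(S)$ satisfies $N_i\cap R=M$ together with a residual isomorphism $k\xrightarrow{\sim}S/N_i$.

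The backbone is the length identity
\[
{\mathrm L}_R(S/R)=|\mathrm{Max}(S)|+{\mathrm L}_R(N/M)-1,
\]
valid as soon as $S$ is semilocal. To obtain it, the Chinese Remainder Theorem applied to the pairwise comaximal family $\{N_i\}$ gives $S/N\cong\prod_i S/N_i\cong k^{|\mathrm{Max}(S)|}$, hence ${\mathrm L}_R(S/N)=|\mathrm{Max}(S)|$. Combining this with ${\mathrm L}_R(R/M)=1$, the exact sequence $0\to R/M\to S/M\to S/R\to 0$, and the additivity of length along the filtration $M\subseteq N\subseteq S$, I obtain the identity. The sum condition is therefore equivalent to ${\mathrm L}_R(S/R)=2$.

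Next I match $\ell[R,S]=2$ with ${\mathrm L}_R(S/R)=2$. The key observation is that any minimal step $A\subset B$ inside an infra-integral $M$-crucial extension satisfies ${\mathrm L}_R(B/A)=1$: infra-integrality rules out the inert case of Theorem~\ref{1.3}, so $B=A+Aq$ with $(A:B)\in\mathrm{Max}(A)$ and $q\cdot(A:B)\subseteq(A:B)$; hence $B/A\cong A/(A:B)$ is an $R$-simple module isomorphic to $k$ (again by infra-integrality). Summing along any maximal chain of two minimal steps shows $\ell[R,S]=2\Rightarrow{\mathrm L}_R(S/R)=2$. Conversely, ${\mathrm L}_R(S/R)=2$ bounds all chain lengths by $2$ (so FCP holds) and excludes $R\subset S$ being minimal (which would force ${\mathrm L}_R(S/R)=1$), so a maximal chain has length exactly $2$.

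Finally, I verify semilocality so that the identity is applicable. Under the sum hypothesis, $|\mathrm{Max}(S)|\leq 3$ is trivial; under $\ell[R,S]=2$, a maximal chain writes $S=R[q_0,q_1]$ with each $q_i$ integral over $R$, so $S$ is finite over $R$ and hence semilocal. The only mild obstacle is sequencing the implications so that the length identity may be invoked in both directions; beyond Theorem~\ref{1.3} and the Chinese Remainder Theorem, no new tool is required.
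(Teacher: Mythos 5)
Your proof is correct, but it follows a genuinely different route from the paper's. The paper obtains the forward implication by quoting \cite[Lemma 5.4]{DPP2}, and proves the converse by splitting the sum into its three possible shapes $({\mathrm L}_R(N/M),|\mathrm{Max}(S)|)=(0,3),(1,2),(2,1)$, in each case constructing an explicit intermediate ring (e.g.\ $T=R+N$ in the middle case), verifying that the two steps are minimal ramified or decomposed, and then invoking \cite[Lemma 5.4]{DPP2} and \cite[Proposition 4.15]{DPP3} again. You instead funnel both conditions through the single invariant ${\mathrm L}_R(S/R)$: the CRT/filtration identity ${\mathrm L}_R(S/R)=|\mathrm{Max}(S)|+{\mathrm L}_R(N/M)-1$ (legitimate once $S$ is known to be semilocal, which you check on each side of the equivalence) converts the numerical condition into ${\mathrm L}_R(S/R)=2$, while the observation that every minimal step inside the extension has $R$-length one converts $\ell[R,S]=2$ into the same equality. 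In effect you give a self-contained reproof of exactly the piece of \cite[Lemma 5.4]{DPP2} that the paper leans on; what this buys is a uniform argument with no case analysis and no external citations beyond Theorem~\ref{1.3} and the Chinese Remainder Theorem, at the price of not exhibiting the intermediate rings explicitly. One small point deserves to be made explicit: to exclude inert steps and to identify $B/A\cong A/(A:B)\cong R/M$ you need that every subextension $A\subseteq B$ of $R\subseteq S$ is again infra-integral and that $(A:B)$ contracts to $M$; this follows by lifting a maximal ideal of $B$ to one of $S$ and composing the residual isomorphisms, but you use it without quite stating it.
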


\begin{proof} Assume that $\ell[R,S]= 2$. Then, $R\subset S$ has FCP. Now, use \cite[Lemma 5.4]{DPP2} to get the wanted equality.

Conversely, assume  ${\mathrm L}_R(N/M)+|\mathrm {Max}(S)|=3$. Since $|\mathrm {Max}(S)|\geq 1$,  we have  three cases: (1) ${\mathrm L}_R(N/M)=0$ and $|\mathrm {Max}(S)|=3$, (2) ${\mathrm L}_R(N/M)=1$ and $|\mathrm {Max}(S)|=2$, (3) ${\mathrm L}_R(N/M)=2$ and $|\mathrm {Max}(S)|=1$.

(1) Assume that ${\mathrm L}_R(N/M)=0$ and $|\mathrm {Max}(S)|=3$. We get $N=M=M_1 \cap M_2 \cap M_3$, where $\{M_1, M_2, M_3\}=\mathrm {Max}(S)$. Since $R\subset S$ is infra-integral, $R/M\cong S/M_i$ for $i\in\{1, 2, 3\}$. Then, $S/M\cong \prod_{i=1}^3(S/M_i)\cong (R/M)^3$ entails that $R/M\subset S/M$ is a seminormal  infra-integral FCP extension of length 2  \cite[Proposition 4.15]{DPP3} and \cite[Lemma 5.4]{DPP2}, and so is $R\subset S$. To conclude,  $\ell[R,S]=2$.

(2) Assume that ${\mathrm L}_R(N/M)=1$ and $|\mathrm {Max}(S)|=2$, so that $N=M_1 \cap M_2 $, where $\{M_1, M_2\}=\mathrm {Max}(S)$. Moreover, there exists some $x\in N\setminus M$ such that $N=M+Rx$. It follows that $M\subseteq M+Mx\subseteq M+Rx=N$. From ${\mathrm L}_R(N/M)=1$ we deduce that either $M+Mx=M\ (*)$ or $M+Mx=M+Rx\ (**)$. We claim that $(**)$ cannot occur. Deny, then $x\in M+Mx$ implies that there are $m,m'\in M$ with $x=m+m'x$.  Therefore,  $(1-m')x=m$ and since $1-m'$ is a unit of $R$, we get $x\in M$, a contradiction. Then, only $(*)$ holds, so that $Mx\subseteq M$. Set $T:=R+N=R+Rx\in[R,S]$,  then $N$ is an ideal of $T$. Now, $T/N=(R+N)/N\cong R/M$ shows that $N\in\mathrm {Max}(T)$ and is the only maximal of $T$ since $M_1, M_2$ lie over $N$. To end ${\mathrm L}_R(T/R)={\mathrm L}_R((R+N)/R)={\mathrm L}_R(N/(R\cap N))={\mathrm L}_R(N/M)=1$ shows that $R\subset T$ is a minimal, necessarily ramified extension. 

Since $R\subset S$ is infra-integral, we get $R/M\cong T/N\cong S/M_i$ for $i=1,2$. But $S/N\cong S/M_1\times S/M2\cong (T/N)^2$ implies that $T/N\subset S/N$ identifies with $T/N\subset (T/N)^2$, which is  minimal decomposed, and so is $T\subset S$.  Then, $R\subset S$ has FCP since it  is a tower of two minimal finite extensions    \cite[Theorem 4.2]{DPP2}. To conclude $\ell[R,S]=2$ by \cite[Lemma 5.4]{DPP2} since $R\subset S$ is infra-integral.

(3) Assume that ${\mathrm L}_R(N/M)=2$ and $|\mathrm {Max}(S)|=1$, so that $\{N\}=\mathrm {Max}(S)$. Then, $S$ is a local ring and $R\subset S$ is subintegral. Moreover, $S/N\cong R/M$ gives that $S=R+N$. It follows that ${\mathrm L}_R(S /R)={\mathrm L}_R((R+N) /N)={\mathrm L}_R(N /(R\cap N))={\mathrm L}_R(N/M)=2$, which shows that $R\subset S$ has FCP. At last, \cite[Lemma 5.4]{DPP2} shows that $\ell[R,S]=2$.
 \end{proof}

\begin{proposition}\label{3.63} If  $R\subset S$ is an infra-integral  $M$-crucial extension and  $N:=\sqrt {MS} $, then $\ell[R,S]= 2$   if and only if ${\mathrm L}_R(N/M) + |\mathrm {V}(N)|= 3$.
\end{proposition}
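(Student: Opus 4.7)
The plan is to reduce to the local case and invoke the previous Proposition~\ref{3.623}. Since $R\subset S$ is $M$-crucial, Proposition~\ref{3.11} gives $\ell[R,S]=\ell[R_M,S_M]$; moreover, $R_M\subset S_M$ remains infra-integral (this property is stable under localization, being defined pointwise on residual extensions) and is $M_M$-crucial. Hence the question reduces to the local ring $(R_M,M_M)$, where Proposition~\ref{3.623} applies directly.

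Applying Proposition~\ref{3.623} to $R_M\subset S_M$ with $N':=\mathrm{Rad}(S_M)$ yields $\ell[R_M,S_M]=2$ if and only if $\mathrm{L}_{R_M}(N'/M_M)+|\mathrm{Max}(S_M)|=3$. I then need to identify each of the two summands with its global counterpart. For the cardinality, Proposition~\ref{1.11}(4) and the fact that $S$ is integral over $R$ give $\mathrm{Max}(S_M)=\{N'R_M\mid N'\in\mathrm{V}(MS)\}$, so $|\mathrm{Max}(S_M)|=|\mathrm{V}(MS)|=|\mathrm{V}(N)|$ since $\mathrm{V}(N)=\mathrm{V}(\sqrt{MS})=\mathrm{V}(MS)$. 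For the radical, because $S_M$ is integral over the local ring $R_M$ every prime of $S_M$ contains $M_MS_M$ iff it is maximal; therefore $\mathrm{Rad}(S_M)=\sqrt{M_MS_M}=(\sqrt{MS})_M=N_M$.

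It remains to identify the two lengths. I would verify that the $R$-module $N/M$ is supported only at $M$: for $P\in\mathrm{Spec}(R)$ with $P\neq M$, the $M$-crucial assumption gives $S_P=R_P$, hence $(MS)_P=MR_P$; but $M\not\subseteq P$ forces $MR_P=R_P$, so $N_P=\sqrt{R_P}=R_P=M_P$ and $(N/M)_P=0$. Consequently $\mathrm{L}_R(N/M)=\mathrm{L}_{R_M}(N_M/M_M)$ by Northcott's formula, and the equivalence in Proposition~\ref{3.623} translates verbatim to the desired equivalence $\ell[R,S]=2\iff \mathrm{L}_R(N/M)+|\mathrm{V}(N)|=3$.

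The only delicate point is the identification $\mathrm{Rad}(S_M)=N_M$, which rests on $S_M$ being semi-local with all maximal ideals lying over $M_M$ (a consequence of integrality); everything else is bookkeeping via Proposition~\ref{3.11}. No new ideas beyond the local-global dictionary are needed.
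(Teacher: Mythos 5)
Your proof is correct and follows essentially the same route as the paper's: localize at $M$ via Proposition~\ref{3.11}, identify $\mathrm{Rad}(S_M)=N_M$, $|\mathrm{Max}(S_M)|=|\mathrm{V}(N)|$ and ${\mathrm L}_R(N/M)={\mathrm L}_{R_M}(N_M/M_M)$ (the latter by Northcott, exactly as in Corollary~\ref{3.62}), and then invoke Proposition~\ref{3.623}. The extra verifications you supply (support of $N/M$, semi-locality of $S_M$) are the same facts the paper leaves implicit.
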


\begin{proof} Because of (Proposition~\ref{3.11}), we get that $\ell[R,S]=\ell[R_M,S_M]$. If either $\ell[R,S]$ or $\mathrm {V}(N)$ is finite, we get that $N_M=\sqrt {MS_M} $ and $|\mathrm {V}(MS)|=|\mathrm {Max}(S_M)|$ as in (Corollary~\ref{3.62}). For the same reason, ${\mathrm L}_R(N/M)={\mathrm L}_{R_M}(N_M/M_M)$. To conclude, use (Proposition~\ref{3.623}).
\end{proof}

 We now  consider  case (e) of (Definition~\ref{4.5}).

\begin{proposition}\label{3.7} Let  $R\subset S$ be an infra-integral $M$-crucial extension such that $T:={}_S^+R\in ]R,S[$. Then, $\ell[R,S]=2$ if and only if $3\leq|[R,S]|\leq 4$ and $(R:S)= M$ when $|[R,S]|=4$.
In that case,  $R\subset {}_S^+R$ is  minimal ramified, ${}_S^+R \subset S$ is   minimal decomposed  and $R\subset S$ is  simple. Moreover, if $|[R,S]|=3$, then $(R:S)\neq M$.
\end{proposition}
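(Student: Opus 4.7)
The plan is to split the proof into three parts: (A) identifying the types of the minimal links and establishing simplicity under $\ell[R,S]=2$; (B) bounding $|[R,S]|$ and characterizing the value $4$ by the conductor condition $(R:S)=M$; (C) the backward direction. I would reduce to $(R,M)$ local throughout via Proposition~\ref{3.11}.

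For (A), the chain $R\subset T\subset S$ has length 2, so both links are minimal; Proposition~\ref{3.5} identifies the subintegral link $R\subset T$ as ramified and the seminormal infra-integral link $T\subset S$ as decomposed. The classification of co-pointwise minimal integral extensions recalled after Definition~\ref{4.5} confines them to cases (a), (c), (f), all satisfying ${}_S^+R\in\{R,S\}$; since here $T\in\,]R,S[$, Proposition~\ref{2.2} yields that $R\subset S$ is simple. For (B), $|[R,S]|\geq 3$ is immediate; for $|[R,S]|\leq 4$, any $T'\in\,]R,S[\,\setminus\{T\}$ yields minimal links $R\subset T'$ and $T'\subset S$. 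The first link cannot be ramified (else $T'\subseteq{}_S^+R=T$ and minimality force $T'=T$), so it is decomposed; the second link cannot be decomposed (else $|\mathrm{Max}(S)|=3$, contradicting that the decomposed $T\subset S$ gives exactly two maximals), so it is ramified. The main obstacle is the uniqueness of such $T'$; I would settle this by a direct structural analysis in the local case, analogous to \cite[Propositions 7.1 and 7.4]{DPPS} as used in the proof of Proposition~\ref{3.6}.

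For the conductor condition, note $(R:S)\subseteq M$ always and $(R:S)=M$ is equivalent to $MS=M$. If $|[R,S]|=4$, then the $T'$ above exists and $MT'=M$ since $(R:T')=M$. Letting $u$ be the ramified generator of $T'\subset S$ with conductor $P=(T':S)$, any $v\in M$ satisfies $vu\in Pu\subseteq P$ and $(vu)^2=v^2u^2\in M\cdot P\subseteq M$; since $T'/M\cong R/M\times R/M$ is reduced, the image of $vu$ in this product of fields, which lies in the ideal $P/M$ and squares to $0$, must vanish, so $vu\in M$. Hence $Mu\subseteq M$ and $MS=MT'+Mu=M$, giving $(R:S)=M$. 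Conversely, if $(R:S)=M$, the canonical surjection induces a poset isomorphism $[R,S]\cong[R/M,S/M]$; with $k=R/M$ a field, the quotient tower identifies as $k\subset k[\epsilon]/(\epsilon^2)\subset k[\epsilon]/(\epsilon^2)\times k$, and a direct inspection yields exactly four $k$-subalgebras containing the diagonal $k$, hence $|[R,S]|=4$. This gives the ``moreover'' clause.

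For (C), $|[R,S]|=3$ immediately gives $\ell[R,S]=2$ (the absence of intermediates beyond $T$ forces $R\subset T$ and $T\subset S$ minimal), and $|[R,S]|=4$ with $(R:S)=M$ gives $\ell[R,S]=2$ via the same isomorphism $[R,S]\cong[R/M,S/M]$ and the observation that $T/M$ and $T'/M$ are the two incomparable two-dimensional $k$-subalgebras in the four-element poset $[R/M,S/M]$.
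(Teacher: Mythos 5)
Your overall architecture matches the paper's (reduce to the local case, identify the links as ramified then decomposed via Proposition~\ref{3.5}, get simplicity from Proposition~\ref{2.2}), and two of your sub-arguments are legitimate variants: the computation showing $Mu\subseteq M$ via the reducedness of $T'/M$ is a workable replacement for the paper's identity $M=M_1M_2=M_1(M_2S)$, and your explicit identification $S/M\cong k\times k[\epsilon]/(\epsilon^2)$ under the hypotheses $\ell[R,S]=2$ and $(R:S)=M$ correctly yields $|[R,S]|=4$ and hence the ``moreover'' clause by contraposition. But there are two genuine gaps. First, the bound $|[R,S]|\leq 4$: you correctly show that a second intermediate ring $T'$ must satisfy $R\subset T'$ decomposed and $T'\subset S$ ramified, but you then defer the uniqueness of such a $T'$ to an unspecified ``direct structural analysis.'' This is the crux of the count, and the paper supplies a concrete argument: a third such $T''$ would also be minimal decomposed over $R$, forcing $S=T'T''$ to be seminormal over $R$ by \cite[Proposition 7.6]{DPPS} together with Proposition~\ref{3.5}, contradicting ${}_S^+R\neq R$. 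Without this step (or an equivalent), the forward implication is incomplete.

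Second, in the backward direction for $|[R,S]|=4$ with $(R:S)=M$, you assert that $T/M$ and $T'/M$ are ``the two incomparable two-dimensional $k$-subalgebras'' of $[R/M,S/M]$. That incomparability is exactly what has to be proved: a priori the four elements of $[R,S]$ could form a chain $R\subset T_1\subset T_2\subset S$ of length $3$ (Example~\ref{3.131}(4) realizes precisely this configuration when $(R:S)\neq M$), and your identification of $S/M$ as a three-dimensional algebra was derived from $\ell[R,S]=2$, which you cannot assume here. The paper excludes the chain case by noting that the bottom ramified link produces $x$ with $x^2\in M$ and $x\notin M$, so $M=(R:S)$ is not a radical ideal of $S$, and then invoking \cite[Lemma 17]{DPP4} to manufacture a fifth intermediate ring $T'''$ with $T'''\subset S$ minimal ramified, contradicting $|[R,S]|=4$. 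Some argument of this kind is needed to close your proof.
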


\begin{proof} By (Proposition  ~\ref{3.11}), we can assume that $(R,M)$ is  local  since $\ell[R,S]=\ell[R_M,S_M]$ (resp. $|[R,S]|=|[R_M,S_M]|$).

Assume first that $\ell[R,S]=2$.  Since $R\subset S$ has FCP, the chain $R\subset T\subset S$ has length 2  (\cite[Lemma 5.4]{DPP2}), so that $R\subset T$ is  minimal ramified  and $T\subset S$ is  minimal decomposed. Therefore,  $|\mathrm{Max}(S)|=2$ and $R\subset S$ is  simple  by (Proposition~\ref{2.2}) and because of the classification of co-pointwise minimal extensions described in (Proposition~\ref{2.4}). We will discuss with respect to the emptiness  of $]R,S[\setminus\{ T\}$. If  $]R,S[\setminus\{ T\}= \emptyset$, then $|[R,S]|=3$. 

  Assume that there is some $T'\in]R,S[\setminus\{ T\}$, so that $R\subset T'$  is  minimal. Since $R\subset T$ is minimal, $R\subset T'$ cannot be minimal ramified, so that $R\subset T'$ is minimal decomposed. In particular, $|\mathrm{Max}(T')|=2$ and $T'$ has two maximal ideals $M_1,M_2$ satisfying $M=M_1M_2$. Moreover, $T'\subset S$ is necessarily minimal ramified since $|\mathrm{Max}(S)|=2$. Then, $(T':S)$ is for example $M_1$,   an ideal shared by $T'$ and $S$.  Hence, \cite[Lemma 2.4]{DPP2} yields that there is a unique maximal ideal in $S$ lying above $M_2$ which is $M'_2:=M_2S$. But, $M=M_1M_2=(M_1S)M_2=M_1(SM_2)=M_1M'_2$ is an ideal of $S$, so that $M=(R:S)$. Assume there is some $T''\in]R,S[\setminus\{ T',T\}$, so that $R\subset T''$  is  minimal. By the same reasoning as for $T'$, we get that $R\subset T''$ is minimal decomposed. Then, $S=T''T'$ gives that $R\subset S$ should be seminormal  \cite[Proposition 7.6]{DPPS} and (Proposition ~\ref{3.5}), a contradiction. Then, $|[R,S]|= 4$. 

We show the converse. 
 If $|[R,S]|=3$, then $\ell[R,S]=2$.  Assume now that $(R:S)=M$ with $|[R,S]|=4$ and set $[R,S]=\{R,T',T'',S\}$ with all elements distinct. Since $R\subset T',T''\subset S$, either $T'$ and $T''$ are comparable, or they are incomparable. In this last case we get two maximal chains of length 2 from $R$ to $S$  and $\ell[R,S]=2$. If $T''$ and $T'$ are comparable, we have a chain of length 3, with, for instance, $R\subset T'\subset T''\subset S$, where $R\subset T'$ is minimal ramified, and $T''\subset S$ is minimal decomposed. But, since $R\subset T'$ is ramified, there exists $x\in T'\setminus R$ such that $x^2\in M$ and $x\not\in M$ (Theorem \ref{1.3}). In particular, $M=(R:S)$ is not a radical ideal of $S$. Then, \cite[Lemma 17]{DPP4} yields again that there exists some $T'''\in[R,S]$ such that $T'''\subset S$ is minimal ramified, so that $T'''\neq T',T''$,  contradicting  the assumption.   
 
 We show that $(R:S)\neq M$ when $|[R,S]|=3$.  Deny and assume that $(R:S)= M$. Since $R\subset T$ is minimal ramified, there exists $x\in T\setminus R$ such that $T=R[x]$ with $x^2\in M$ and $x\not\in M$ (Theorem \ref{1.3}). In particular, $M$ is not a radical ideal of $S$. Then, \cite[Lemma 17]{DPP4} yields that there exists some $T'\in[R,S]$ such that $T'\subset S$ is minimal ramified, so that $|[R,S]|> 3$,  an absurdity. Hence, $(R:S)\neq M$.
\end{proof}

 We will see later on (Example~\ref{3.131}(4)) that the condition $(R:S)=M$ to have $\ell[R,S]=2$ is necessary when $|[R,S]|=4$. 

\begin{example} \label{3.71} We are going to give examples satisfying each condition of (Proposition~\ref{3.7}) using  (Table T$_1$).

(1) We gave this example to D. Dobbs in \cite[Example]{D}. Let $(R,M)$ be a SPIR such that $M^2=0$ with $M\neq 0$. There exists $t\in M$ such that $M=Rt$, so that $t^2=0$. Set $S:=R[Y]/(Y^2-Y) =R[y]$, where $y$ is the class of $Y$ in $S$. Set $x:=ty$ and $T:=R[x]$. It follows that we have the tower $R\subset T\subset S$ with $R\subset S$ an $M$-crucial extension such that $(R:S)=0\neq M$. We get that $T={}_S^+R$ with $R\subset T$ minimal ramified and $T\subset S$ is minimal decomposed. Then, $|[R,S]|=3$ and $[R,S]=\{R,T,S\}$ by (Proposition~\ref{3.7}).

(2) Let $(R,M)$ be a PVD with $M:=Rt$. Set $S:=R[X,Y]/(X^2,Y^2-Y,XY,tX,tY)$ and let $x$ (resp. $y$) be the class of $X$ (resp. $Y$) in $S$, so that $x^2=y^2-y=xy=tx=ty=0$. Then $S=R[x,y]=R+Rx+Ry$. Set $T:=R[x]$ and $T':=R[y]$. Then, $x^2=tx=0$ shows that $R\subset T$ is  ramified minimal with crucial maximal ideal $M$. This implies that there is a unique maximal ideal $N$ in $T$ lying above $M$ and it satisfies $N=M+Rx=Rt+Rx$. Now, $y^2-y=ty=0$ shows that $R\subset T'$ is  decomposed minimal, with crucial maximal ideal $M$.  Therefore, there are two maximal ideals $N_1,N_2$ in $T'$ lying above $M$ and satisfying (for example) $N_1=M+Ry=Rt+Ry$ and $N_2=M+R(1-y)=Rt+R(1-y)$. Moreover, $M=N_1N_2$. Since $T\neq T'$, we have $|[R,S]|\geq 4$. From \cite[Proposition 7.6 (a)]{DPPS}, we infer that $T\subset S$ is minimal decomposed because $NN_1\subseteq M$. In particular, $R\subset S=TT'$ is an infra-integral $M$-crucial extension which is not seminormal with $(R:S)=M$. Moreover $T={}_S^+R$ and $\ell[R,S]=2$ by \cite[Lemma 5.4]{DPP2}. Then, $|[R,T]|= 4$ and $[R,S]=\{R,T,T',S\}$ by (Proposition~\ref{3.7}).
\end{example}

We  next have  to consider  extensions $R\subset S$ whose seminormalizations and  t-closures are either $R$ or $S$, leading to the following cases of (Definition~\ref{4.5}) : $R\subset S$ is either subintegral  (case (f)), or seminormal infra-integral   (case (c)), or t-closed   (case (a)). In each of these cases, we  have either co-pointwise minimal extensions or simple extensions.

We begin with case (f).

\begin{proposition}\label{3.8} Let $(R,M)$ be a local ring and $R\subset S$ a   subintegral $M$-crucial extension. Then, $\ell[R,S]=2$ if and only if either $R\subset S$ is simple and  $|[R,S]|=3$ or $R\subset S$ is co-pointwise minimal.  In the last case, $|[R,S]|=|R/M|+3$.
\end{proposition}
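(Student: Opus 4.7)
The plan is to dispose of $(\Leftarrow)$ quickly and then handle $(\Rightarrow)$ through the dichotomy of Proposition~\ref{2.2}. For $(\Leftarrow)$: if $R\subset S$ is simple with $|[R,S]|=3$, the last sentence of Theorem~\ref{2.1} gives $\ell[R,S]=2$; if $R\subset S$ is co-pointwise minimal, the last line of Proposition~\ref{2.4} gives the same. For $(\Rightarrow)$, assume $\ell[R,S]=2$ and invoke Proposition~\ref{2.2}(1) to split into two mutually exclusive cases. In the co-pointwise minimal case, subintegrality selects case~$(3)$ of Proposition~\ref{2.4}, so $MS=M$ and $S/M\cong k[X,Y]/(X^2,XY,Y^2)$ with $k:=R/M$. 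Since $M$ is then a common ideal, $T\mapsto T/M$ is a poset isomorphism $[R,S]\to[k,S/M]$; the $2$-dimensional maximal ideal of $S/M$ has square zero, so every $k$-subspace gives a subalgebra, and counting subspaces of a $2$-dimensional $k$-vector space yields $1+(|k|+1)+1=|k|+3$.

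The simple case, where $S=R[x]$, is the main work: I must show $|[R,S]|=3$. By Proposition~\ref{3.5}(1) the length-$2$ chain consists of two ramified minimal steps, so $L_R(S/R)=2$ and $M^2\cdot(S/R)=0$. I split on whether $S/R$ is cyclic as an $R$-module. If $S/R$ is cyclic, write $S=R+Rx$ with $S/R\cong R/I$ where $I=\mathrm{ann}_R(\bar x)$; the condition $L_R(R/I)=2$ forces $M^2\subseteq I\subsetneq M$ with $\dim_k(M/I)=1$. The $R$-submodules of $S$ containing $R$ are the $R+Jx$ for ideals $I\subseteq J\subseteq R$, and writing $x^2=a+bx$, the closure-under-multiplication condition reduces to $J^2b\subseteq J$, which always holds because $J^2\subseteq M^2\subseteq J$ for $J\neq R$. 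The only ideals between $I$ and $R$ are $I$, $M$, $R$ (since $\dim_k M/I=1$ leaves no room for intermediate ideals), giving $|[R,S]|=3$. If $S/R$ is non-cyclic, Nakayama forces $M(S/R)=0$, hence $MS\subseteq R$; combined with $M\subseteq MS$ and $(R:S)\subseteq M$ from Proposition~\ref{1.11}(1), this yields $MS=M$. Then $[R,S]\cong[k,S/M]$ and the short exact sequence $0\to R/M\to S/M\to S/R\to 0$ gives $\dim_k(S/M)=3$. Simplicity descends to $S/M=k[\bar x]$, so the $2$-dimensional maximal ideal $\mathfrak{m}$ of $S/M$ is principal; Nakayama on $S/M$ then excludes $\mathfrak{m}^2=0$ (which would force two generators for a $2$-dimensional $\mathfrak{m}$), yielding $\dim_k\mathfrak{m}^2=1$ and $S/M\cong k[T]/(T^3)$, whose $k$-subalgebras are exactly $k$, $k[T^2]$, and $k[T]$.

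The main obstacle is the simple case: the cyclic-versus-non-cyclic dichotomy must be extracted from the $R$-module structure of $S/R$ since it is invisible in the statement, and each branch then requires its own argument. In the cyclic branch, the subtle point is verifying that every $R$-submodule lying between $R$ and $S$ is automatically closed under multiplication. In the non-cyclic branch, the real work is using Nakayama twice --- first to promote $M(S/R)=0$ to $MS=M$, and then to classify the $3$-dimensional local $k$-algebra $S/M$ as $k[T]/(T^3)$.
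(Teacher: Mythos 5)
Your proof is correct, and in the co-pointwise minimal case it coincides with the paper's argument (both count the $|k|+1$ lines of the two-dimensional square-zero maximal ideal of $S/M$). The genuine divergence is in the simple case. The paper takes an arbitrary $T\in]R,S[$, notes both steps of $R\subset T\subset S$ are ramified, and then invokes the \emph{negation} of the co-pointwise-minimality criteria of \cite{Pic 7} to split into the cases $(R:S)\neq M$ or $N^2\not\subseteq M$; in each case it pins $T$ down explicitly as $R+MS$, respectively $R+N^2$, whence uniqueness of $T$ and $|[R,S]|=3$. You instead argue module-theoretically: from the two ramified steps you extract ${\mathrm L}_R(S/R)=2$ and $M^2S\subseteq R$, and split on whether $S/R$ is a cyclic $R$-module. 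In the cyclic branch the intermediate rings are exactly the intermediate $R$-submodules $R+Jx$ (closure under multiplication is automatic, since already $J^2\subseteq J$ makes $J^2bx$ land in $R+Jx$), and ${\mathrm L}_R(R/I)=2$ leaves only $J\in\{I,M,R\}$; in the non-cyclic branch Nakayama forces $MS=M$, and the classification of the monogenic three-dimensional local $k$-algebra $S/M$ as $k[T]/(T^3)$ finishes the count. Both branches are sound; the one point you state tersely but correctly is that $MS\subseteq R$ together with $MS\subseteq N\cap R=M$ gives $MS=M$. Your route is more self-contained, avoiding the import of the pointwise-minimality machinery from \cite{Pic 7}, and it isolates the structural fact that a simple subintegral length-2 extension has $S/M\cong k[T]/(T^3)$ when $S/R$ is non-cyclic; the paper's route has the advantage of producing the uniform explicit description $[R,S]=\{R,R+N^2,S\}$ and the case distinctions on $(R:S)$ and $N^2$ that are then refined in (Corollary~\ref{3.13}).
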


\begin{proof} Clearly,  $S$ is a local ring.  Let $N$ be its maximal ideal.

Suppose that $\ell[R,S]=2$, then, $R\subset S$ is either simple or co-pointwise minimal  by  (Proposition~\ref{2.2}).  Assume first that $R\subset S$ is simple. Let $T\in]R,S[$ so that $R\subset T$  and  $T\subset S$ are minimal ramified  by  Table T$_2$. 
 Setting $M':=N\cap T$, which is the maximal ideal of $T$,  we get $M'=(T:S),\ N^2\subseteq M'$ and $T=R+M'$. Since $R\subset S$  is subintegral,  $R/M\cong S/N$ and $S=R+N$.  As $R\subset S$ is not  co-pointwise minimal, we deduce from \cite[Propositions 3.9 and  5.6 and Lemma 5.3]{Pic 7}, that either $(R:S)\neq M\ (1)$ or $N^2\not\subseteq M\ (2)$. In case (1), it follows that $MS\not\subseteq R$, but $MS\subseteq M'S=(T:S)S\subseteq T$ and then $T=R+MS$. In case (2), $N^2\not\subseteq R$, but $N^2\subseteq M'$ show that $R+N^2=T$. In any case, $T$ is uniquely defined by the properties of $M$ and $N$, so that $|[R,S]|=3$. 

Conversely, $|[R,S]|=3$ implies that $\ell[R,S]=2$. 

Assume that $R\subset S$ is  co-pointwise minimal. Then,  (Proposition~\ref{2.4}) yields that  $\ell[R,S]=2$. Under these conditions, $M=(R:S)$ and there is a poset isomorphism $[R,S]\cong [R/M,S/M]$, so that $|[R,S]|=|[R/M,S/M]|$. Set $k:=R/M$ and $S':=S/M$.  From (Proposition~\ref{2.4}), we infer that there are $x,y\in k\setminus \{0\}$ such that $S'=k+kx+ky$, with $x^2=y^2=xy=0$, so that $S'$  has $\{1,x,y\}$ as a basis  and is  a three-dimensional $k$-vector space. Let $T\in]k,S'[$ so that $T$ is a two-dimensional $k$-subalgebra of $S'$. Then, $T=k+kz=k[z]$, where $z=ax+by,\ (a,b)\in k^2\setminus\{(0,0)\}$. If $a\neq 0$, set $c:=a^{-1}b$ and $t_c:=a^{-1}z=x+cy$, so that $T=k[t_c]$. We get an injection $k\to ]k,S'[$ defined by $c\mapsto k[t_c]$. At last, if $a=0$, then $b\neq 0$ so that $k[by]=k[y]$ and  $k[y]\in]k,S'[$.  
 It follows that  $]k,S'[=\{k[t_c]\mid c\in k\}\cup\{k[y]\}$, giving $|]k,S'[|=|k|+1$, and  $|[R,S]|=|R/M|+3$.
\end{proof}

\begin{proposition}\label{3.81}  A   subintegral $M$-crucial extension $R\subset S$  has length 2 if and only if either $R\subset S$ is simple  and    $|[R,S]|=3$ or $R\subset S$ is co-pointwise minimal.  In the last case, $|[R,S]|=|R/M|+3$.  
\end{proposition}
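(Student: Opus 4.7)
The plan is to reduce the statement to the local case Proposition~\ref{3.8} by localizing at $M$. Since $R \subset S$ is $M$-crucial we have $\mathrm{MSupp}(S/R) = \{M\}$, so Proposition~\ref{3.11}(1) yields the poset isomorphism $\varphi : [R,S] \to [R_M, S_M]$, $T \mapsto T_M$. This immediately transports the invariants: $\ell[R,S] = \ell[R_M, S_M]$ and $|[R,S]| = |[R_M, S_M]|$. Moreover, subintegrality is stable under localization (Definition~\ref{3.4}), so $R_M \subset S_M$ is a subintegral $MR_M$-crucial extension over the local ring $(R_M, MR_M)$, and $|R/M| = |R_M/MR_M|$.

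Applying Proposition~\ref{3.8} to $R_M \subset S_M$ will then provide the desired dichotomy and cardinalities on the local side: $|[R_M, S_M]| = 3$ in the simple case and $|[R_M, S_M]| = |R_M/MR_M| + 3$ in the co-pointwise minimal case. To conclude for $R \subset S$ itself, I would invoke Proposition~\ref{2.2}, which states that every length 2 extension is either simple or co-pointwise minimal with these conditions mutually exclusive; the same dichotomy applies to $R_M \subset S_M$. It is then enough to check that $R \subset S$ is simple iff $R_M \subset S_M$ is simple. The forward direction is clear; for the converse, given $S_M = R_M[y]$, I would write $y = z/s$ with $z \in S$ and $s \in R \setminus M$ (a unit in $R_M$), obtain $S_M = R_M[z] = (R[z])_M$, and use the injectivity of $\varphi$ to conclude $R[z] = S$.

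The main obstacle I foresee is this last transfer of simplicity from the local to the global setting; all other items are a direct consequence of Proposition~\ref{3.11} or of the local Proposition~\ref{3.8}. For the easy reverse implication of the main equivalence, if $R \subset S$ is simple with $|[R,S]| = 3$ then $\ell[R,S] = 2$ by definition of length, and if $R \subset S$ is co-pointwise minimal, then $\ell[R,S] = 2$ by the final sentence of Proposition~\ref{2.4}.
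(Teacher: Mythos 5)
Your proposal is correct and follows essentially the same route as the paper, whose proof of this proposition is literally ``Use (Proposition~\ref{3.8}) and (Proposition~\ref{3.11})''. Your extra care in transferring simplicity from $R_M\subset S_M$ back to $R\subset S$ (via $y=z/s$ and the injectivity of $\varphi$) fills in a detail the paper leaves implicit, and your argument for it is sound.
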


\begin{proof} Use (Proposition~\ref{3.8}) and (Proposition~\ref{3.11}).
\end{proof}

We can say more for a simple extension of length 2.

\begin{corollary}\label{3.13} Let $(R,M)$ be a local ring and $R\subset S$ a $M$-crucial simple subintegral extension.  Then $S$ is  local and we set $\{N\}=\mathrm Max(S)$. 

(i )  
There is  some $y\in N$ such that $S=R[y]$.

(ii)  $\ell[R,S]=2$ if and only if $M^2\subseteq (R:S)\subseteq M$ and one of the following condition holds:
\begin{enumerate}
\item $(R:S)=M,\ N^2\not\subseteq M$ and $N^3\subseteq M$.
\item $(R:S)\neq M,\ y^2\not\in R,\ MS=M+N^2=M+Ry^2\subset N$ and $MN^2\subseteq M$.
\item $(R:S)\neq M,\ y^2\in R$ and $\dim_{R/M}((M+My)/M)=1$.
\end{enumerate} 

If these conditions  hold, then $[R,S]=\{R,R+N^2,S\}$. 
\end{corollary}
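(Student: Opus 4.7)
For (i), the hypothesis that $R\subset S$ is $M$-crucial and subintegral forces the spectral map $\mathrm{Spec}(S)\to\mathrm{Spec}(R)$ to be bijective, so $S$ has a unique maximal ideal $N$ over $M$ and is local, and the residue extension $R/M\to S/N$ is an isomorphism, whence $S=R+N$. Writing $S=R[z]$ by simpleness and decomposing $z=r+y$ with $r\in R$, $y\in N$ yields $S=R[y]$.

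For the forward direction of (ii), assume $\ell[R,S]=2$. Since $R\subset S$ is simple, Proposition~\ref{2.2} excludes co-pointwise minimality, so Proposition~\ref{3.8} yields $|[R,S]|=3$ with a unique intermediate $T$, and the subintegral column of Table T$_2$ forces both $R\subset T$ and $T\subset S$ to be minimal ramified. Setting $M_T:=(T:S)$, Theorem~\ref{1.3}(c) gives $M\subseteq M_T$, $M_T^2\subseteq M$, $MT=M$, and $N^2\subseteq M_T$; consequently $M^2S=M\cdot MS\subseteq M\cdot M_TS\subseteq MT=M$, so $M^2\subseteq (R:S)$, and $(R:S)\subseteq M$ comes from Proposition~\ref{1.11}. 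I then split according to $(R:S)$. If $(R:S)=M$, I pass to $A:=S/M$, a local $k$-algebra (with $k:=R/M$) of $k$-dimension~$3$ with maximal ideal $\mathfrak n:=N/M$ of dimension~$2$: Nakayama gives $\dim_k\mathfrak n^2\le 1$ and, iterating, $\mathfrak n^3=0$, i.e.\ $N^3\subseteq M$; and if $\mathfrak n^2=0$ then $A\cong k[X,Y]/(X^2,XY,Y^2)$, which is the co-pointwise minimal shape of Proposition~\ref{2.4}(3), contradicting simpleness, so $N^2\not\subseteq M$, yielding condition~(1). If $(R:S)\neq M$, then $T=R+MS$ by Proposition~\ref{3.8}, and $y\notin T$ (else $S=R[y]\subseteq T$) combined with the ramified characterization of $T\subset S$ with generator $y$ (Theorem~\ref{1.3}(c)) forces $y^2\in M_T$. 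I subdivide: if $y^2\in R$ (case~(3)) then $S=R+Ry$ and $T=R+My$, so $(M+My)/M\cong T/R$ has $k$-dimension~$1$; if $y^2\notin R$ (case~(2)) then $R[y^2]\in\,]R,S[$ must equal $T$, so $T=R+Ry^2$ with $y^4\in M$ and $My^2\subseteq M$, and from $y^2\in R+MS$ together with $R\cap M_T=M$ one deduces $y^2\in MS$, whence $MS=M_T=M+Ry^2=M+N^2\subsetneq N$, while $MN^2\subseteq M\cdot M_T\subseteq M_T^2\subseteq M$.

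For the converse, and for the identification $[R,S]=\{R,R+N^2,S\}$, I would verify in each case that $R+N^2$ is a proper $R$-subalgebra of $S$ and that both $R\subset R+N^2$ and $R+N^2\subset S$ match the ramified characterization of Theorem~\ref{1.3}(c), so $\ell[R,S]=2$ and the intermediate is unique by Proposition~\ref{3.8}. The main obstacle is case~(2): establishing all three equalities $MS=M+N^2=M+Ry^2$ simultaneously requires carefully juggling the four $R$-generators $1,y,y^2,y^3$ of $S$, the relations $y^4\in M$ and $My^2\subseteq M$, and the inclusion $N^2\subseteq M_T$, so as to reconcile the module-theoretic description $T=R[y^2]=R+Ry^2$ with the ideal-theoretic description $T=R+N^2=R+MS$.
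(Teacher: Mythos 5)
Your part (i) and the forward implication of (ii) are essentially correct and follow the paper's route: reduce to the unique intermediate ring $T$ of Proposition~\ref{3.8}, note both steps are minimal ramified, extract $M^2\subseteq(R:S)\subseteq M$ from the conductors, and split on $(R:S)=M$ versus not and on $y^2\in R$ versus not. In places your arguments are cleaner than the paper's: for case (1) the paper proves $N^3\subseteq M$ by an explicit computation in $S/M$ (writing $y'^3=x'y'=ax'$ and forcing $a=0$), whereas your Nakayama argument on $\mathfrak n\supset\mathfrak n^2\supset\mathfrak n^3$ inside the three-dimensional algebra $S/M$ yields $N^3\subseteq M$ and $N^2\not\subseteq M$ simultaneously (do record why $\dim_{R/M}(S/M)=3$, i.e.\ that each ramified step has length one); for case (3) your isomorphism $(M+My)/M\cong T/R$ replaces the paper's element-chasing with $x_m=my$. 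One small debt: you invoke $T=R+MS$ ``by Proposition~\ref{3.8}'', but that identity occurs in the \emph{proof} of that proposition, not in its statement, so it should be rederived ($MS\not\subseteq R$ because $(R:S)\neq M$, while $MS\subseteq(T:S)\subseteq T$, and $R\subset T$ is minimal).

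The genuine gap is the converse implication of (ii). You only announce that you ``would verify'' that $R+N^2$ is a proper intermediate ring with both steps ramified, and you explicitly flag case (2) as an unresolved obstacle; yet this is half of the statement, and the description $[R,S]=\{R,R+N^2,S\}$ --- which the paper actually establishes inside the \emph{forward} part of each case --- is also deferred to it. The paper's converse is a real computation, done case by case by exhibiting a generator and checking the two ramified conditions of Table T$_1$ together with properness on both sides. In case (1) it passes to $S/M$ and takes $x'=y'^2$, using $N^3\subseteq M$ to get $x'^2=x'y'=0$; in case (2) it takes $x=y^2$, obtains $x^2=y^4\in N^4\subseteq M^2S\subseteq M$ from $N^2\subseteq MS$ and $M^2\subseteq(R:S)$, gets $Mx\subseteq M$ from $MN^2\subseteq M$, and then verifies $T\subset S=T[y]$ is ramified via $M'y=My+Ry^3\subseteq M'$ --- precisely the juggling you describe without carrying out; in case (3) it uses $\dim_{R/M}((M+My)/M)=1$ to produce $x\in My\setminus M$ with $x^2\in M$ and $Mx\subseteq M^2S\subseteq M$, and sets $T=R+M+My$. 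The properness checks (e.g.\ $y\notin R+N^2$, typically via Nakayama applied to $N/M$) are also omitted from your sketch. Until these verifications are supplied, the ``if'' direction and the final identification of $[R,S]$ remain unproved.
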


\begin{proof} By subintegrality,  $S$ is a local ring with maximal ideal $N$ and $S=R+N$. Since $R \subset S$  is simple, there is $z\in S$ such that $S=R[z]$. But $z=a+y$, for some $a\in R$ and $y\in N$, so that $R[z]=R[y]$. 

Assume first that $\ell[R,S]=2$, so that there is $T\in[R,S]$ such that $R\subset T$ and $T\subset S$ are minimal ramified and then  $T$ is a local ring with maximal ideal $M'$. From $MM'\subseteq M,\ M'^2\subseteq M\subset M',\ M'S=M'$ and $N^2\subseteq M'$, we deduce that $M^2S\subseteq MM'S= MM'\subseteq M$, so that $M^2\subseteq(R:S)\subseteq M$. In particular, $N^4\subseteq M$, which gives $S=R+Ry+Ry^2+Ry^3$ and $N=M+Ry+Ry^2+Ry^3$.

(1) If $(R:S)=M$, then $y^2\not\in M$, because,  if not, by Table T$_1$,  $R\subset S$ would be minimal ramified, whence $N^2\not\subseteq M$. Moreover, $M$ is an ideal of both  $R$, $T$ and $S$. Setting $k:=R/M,\ T':=T/M,\ S':=S/M,\ N':=N/M$ and  $y'$  the class of $y$ in $S'$, we get the tower $k\subset T'\subset S'=k[y']$, where $k$ is a field and $k\subset T'$ and $T'\subset S'$ are minimal ramified, so that there exists $x'\in T'$ such that $T'=k[x']$. Since $y'^2\in T'\setminus k$, there is no harm to choose $x'=y'^2$. Now,  $T'=k+kx'=k+ky'^2$ is a local ring with maximal ideal $kx'$ such that $x'^2=0$. Then, $N'=kx'+T'y'=kx'+ky'$ because $x'y'\in kx'$. We get $x'^3=x'^2y'=x'y'^2=x'^2=0$.  Finally,  from $y'^3=x'y'\in kx'$, we infer that there is some $a\in k$ such that $x'y'=ax'$ which implies that $x'(a-y')=0$ in $S'$. If $a\neq 0$, then $a-y'$ is a unit in $S'$, so that $x'=0$, a contradiction. Then, $a=0$ and $y'^3=0$. Therefore, $N'^3=0$ and then $N^3\subseteq M$. From $N^2\not\subseteq M$, we deduce  $R\subset R+N^2$. Now $N^2\subseteq M'$ shows that $y\not\in N^2$ and that $R\subset R+N^2\subseteq T$. Then, $T=R+N^2$ and $[R,S]=\{R,R+N^2,S\}$.

(2) Assume that $(R:S)\neq M$ and $y^2\not\in R$, so that $M\neq MS,N^2$. Since $T\subset S$ is minimal ramified,   $N^2\subseteq M'$. It follows that $T=R+MS=R+N^2$ and $M'=MS=M+N^2\subset N$. Finally, $MN^2\subseteq MM'\subseteq M$. Moreover, $y^2\in T$ implies $M'=M+Ry^2$. In particular, $[R,S]=\{R,R+N^2,S\}$.

(3) Assume that $(R:S)\neq M$ and $y^2\in R$. We show that $My\not\subseteq M$. Denying, we would have $Ry^2,\ Ry^3\subseteq M$, so that $(R:S)=M$, a contradiction. In particular, there is some $m\in M$ such that $my\not\in R$. Set $x_m:=my$. Then, $x_m^2=m^2y^2\in M$. Since $Mx_m=Mmy\subseteq M^2S\subseteq M$, we get that $R\subset R[x_m]$ is minimal ramified, so that $R[x_m]=T$ by (Proposition \ref{3.8}). In particular, $M'=M+Rm'y$ holds for any $m'\in M$ such that $m'y\not\in M$. It follows that $M'=M+My\subseteq M+N^2\subseteq M'$. Then, $T=R+MS=R+N^2$ because $T=R+M'\subseteq R+N^2\subseteq R+M'=T$. Since $MM'\subseteq M$ and $R\subset T$ is minimal ramified,  ${\mathrm L}_R(M'/M)=1=\dim_{R/M}((M+My)/M)$ holds. In particular, $[R,S]=\{R,R+N^2,S\}$.

Conversely, assume that $M^2\subseteq (R:S)\subseteq M$ and one of the  conditions (1), (2) or (3) holds.

(1) Assume that $(R:S)=M,\ N^2\not\subseteq M$ and $N^3\subseteq M$. We keep the same notation as in the direct part of (1) for $k,S',N'$ and $y'$. Then, $y'^3=0$ gives $S'=k+ky'+ky'^2$ and $N'=ky'+ky'^2$. It follows that $N'^2=ky'^2\neq 0$, so that $T':=k[y'^2]\neq k$. Set $x':=y'^2$. Then, $T'=k[x']$ verifies $x'^2=y'^4=0$, so that $k\subset T'$ is minimal ramified and $T'$ is a local ring with maximal ideal $kx'$. In particular, $T'\neq S'$ since $k\subset S'$ is not minimal. Finally, $y'^2=x'\in kx'$ and $x'y'=y'^3=0$ show that $T'\subset S'$ is minimal ramified. Then, $\ell[k,S']=2=\ell[R,S]$.

(2) Assume that $(R:S)\neq M,\ y^2\not\in R,\ MS=M+N^2=M+Ry^2\subset N$ and $MN^2\subseteq M$. In particular, $N^2\subseteq MS$ implies that $N^4\subseteq M^2S\subseteq M$. Set $x:=y^2\in N^2$. Then, $Mx\subseteq M$ and $x^2\in N^4\subseteq M$ show that $R\subset T:=R[x]$ is minimal ramified, with $T$ a local ring whose maximal ideal is $M':=M+Rx=M+Ry^2=MS=M+N^2$.  Moreover, $T\subset S=T[y]$ since $(R:S)\neq M=(R:T)$. Now, $y^2\in M'$ and $M'y=My+Ry^3$, with $My\subseteq MS=M'$ and $y^3\in N^2\subseteq M'$,  imply that $T\subset S$ is minimal ramified. 

(3) Assume that $(R:S)\neq M,\ y^2\in R$ and $\dim_{R/M}((M+My)/M)=1$. We get that $y^2\in M$ and $y^n\in M$ for any integer $n\geq 4$ since $M^2\subseteq (R:S)$. It follows that $S=R+Ry+Ry^3$. Set $M':=M+My$. Then, $M'$ is an ideal of $S$ containing strictly $M$, since $M'y=My+My^2\subseteq M'$ and $M'y^3\subseteq M'y\subseteq M'$. Set $T:=R+M'\in[R,S]\setminus \{R\}$. Then, $R\subseteq T$ is subintegral and  $MT\subseteq M+M^2+M^2y\subseteq M$, so that $M=(R:T)$. Now, $\dim_{R/M}((M+My)/M)=1={\mathrm L}_R(M'/M)$ shows that there exists $x\in M'\setminus M$ such that $M'=M+Rx$, giving $T=R+Rx=R[x]$.  Since $Mx\subseteq MM'=M^2+M^2y\subseteq M$ and $x^2\in M'^2\subseteq M^2+M^2y+M^2y^2\subseteq M$, we get that $R\subset T$ is minimal ramified.

Then  $T\neq S$ since $M=(R:T)\neq (R:S)$, $y^2\in M\subseteq M'$ and $M'y=My+My^2\subseteq My+M=M'$ show that $T\subset S$ is minimal ramified. 
\end{proof}

\begin{corollary}\label{3.132}  Let  $R\subset S$ be a simple subintegral   $M$-crucial extension.   Let $N$ be the only maximal ideal of $S$ lying over $M$. There exists $y\in N$ such that $S=R[y]$.

Then,  $\ell[R,S]=2$ if and only if $M^2\subseteq (R:S)\subseteq M$ and one of the following condition holds:
\begin{enumerate}
\item $(R:S)=M,\ N^2\not\subseteq M$ and $N^3\subseteq M$.
\item $(R:S)\neq M,\ y^2\not\in R,\ MS=M+N^2=M+Ry^2\subset N$ and $MN^2\subseteq M$.
\item $(R:S)\neq M,\ y^2\in R$ and $\dim_{R/M}((M+My)/M)=1$.
\end{enumerate} 

If these conditions  hold, then $[R,S]=\{R,R+N^2,S\}$. 
\end{corollary}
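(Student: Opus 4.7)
The plan is to reduce to the local case already settled by Corollary~\ref{3.13}. Because $R\subset S$ is $M$-crucial, Proposition~\ref{3.11}(1) gives a length-preserving poset isomorphism $[R,S]\cong [R_M,S_M]$; subintegrality is stable under localization, and the spectral map of a subintegral extension is bijective, so the only prime of $S_M$ over $M_M$ is $N_M$, making $S_M$ local with maximal ideal $N_M$. The extension $R\subset S$ is integral and generated by the single element $y$, hence finite, so Proposition~\ref{3.11}(4) gives $(R_M:S_M)=(R:S)_M$. In particular $R_M\subset S_M$ is a simple subintegral $M_M$-crucial extension of a local ring, to which Corollary~\ref{3.13} applies.

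Once we are in the local situation, Corollary~\ref{3.13} characterizes $\ell[R_M,S_M]=2$ by $M_M^{\,2}\subseteq (R_M:S_M)\subseteq M_M$ together with one of the three numbered alternatives expressed in $R_M$, $S_M$, $M_M$, $N_M$, and $y/1$. The remaining task is to show that each such local condition is equivalent to the homonymous global condition in the statement. The uniform observation that unlocks this is that $\mathrm{MSupp}(S/R)=\{M\}$ forces $R_{M'}=S_{M'}$ at every maximal ideal $M'\ne M$, so all identities and inclusions involving $R$-ideals or $S$-ideals hold automatically after localization at any $M'\ne M$. Consequently each global statement $M^2\subseteq (R:S)$, $(R:S)=M$, $N^2\subseteq M$, $N^3\subseteq M$, $MS=M+N^2=M+Ry^2\subset N$, $MN^2\subseteq M$, and $y^2\in R$ is equivalent to its localization at $M$, using Propositions~\ref{1.11}(1) and~\ref{3.11}(4) for the conductor parts. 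The dimension identity $\dim_{R/M}((M+My)/M)=1$ likewise transfers because length of a finitely generated $R/M$-module is invariant under localization at $M$.

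The main obstacle is this translation step: several of the conditions mix an $R$-ideal $M$ with an $S$-ideal $N$, and one must verify that each of $MS$, $N^2$, $N^3$, $MN^2$, and $M+Ry^2$ commutes with localization at $M$ in the needed way. This works because $R\subset S$ is finite and $M$-crucial, so all the relevant modules are finitely generated and supported only at $M$. Finally, the conclusion $[R,S]=\{R,R+N^2,S\}$ is transported from $[R_M,S_M]=\{R_M,R_M+N_M^{\,2},S_M\}$ via the poset isomorphism of Proposition~\ref{3.11}(1), since $(R+N^2)_M=R_M+N_M^{\,2}$ identifies the unique intermediate algebra globally.
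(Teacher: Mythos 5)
Your proposal is correct and follows essentially the same route as the paper, which simply invokes the localization machinery (Proposition~\ref{3.11}) to reduce to the local case settled in Corollary~\ref{3.13}. You spell out in more detail than the paper why each global condition is equivalent to its localization at $M$ (using that $S/R$ is supported only at $M$), but this is exactly the translation the paper's terse proof takes for granted.
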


\begin{proof}  Use (Proposition~\ref{3.8}) and (Corollary~\ref{3.13}).
\end{proof}

\begin{example}\label{3.131} We are going to give  three examples satisfying each condition of (Corollary~\ref{3.13}) and one example showing that (Proposition~\ref{3.7}) does not hold when $(R:S)\neq M$ and $|[R,S]|=4$.

(1) Let $k$ be a field and set $S:=k[Y]/(Y^3)=k[y]$, where $y$ is the class of $Y$ in $S$. Set $x:=y^2\neq 0$, which satisfies $x^2=0$, so that $k\subset k[x]$ is minimal ramified, and  $T:=k[x]$ is a local ring with maximal ideal $M':=kx$. Moreover, $T\neq S$ because $y\not\in T$. Since $y^2=x\in M'$ and $xy=y^3=0\in M'$, we get that $T\subset S$ is also minimal ramified and we are in case (1) of (Corollary~\ref{3.13})

(2) Let $(R,M)$ be a SPIR such that $M=Rt\neq 0$, with $t^2=0$. Set $S:=R[Y]/(tY^2,tY-Y^2,Y^3)$ and let $y$ be the class of $Y$ in $S$. We have an extension $R\subset S$ and $(R:S)\neq M$ since $ty\not\in R$.  Set $x:=y^2\not\in R$ and $T:=R[x]$. Then, $x^2=tx=0\in M$ show that $R\subset T$ is  minimal ramified. Moreover, $T=R+Rx$ is a local ring with maximal ideal $M':=M+Rx=Rt+Rx$. We claim that $T\subset S$. Deny, and assume that $y\in T$, so that $y\in M'$, since $y^2\in M'$. There exist $a,b\in R$ such that $y=at+bx=at+by^2$, which gives $0\neq yt=at^2+bty^2=0$, a contradiction. To end, $ty=y^2=x\in M'$ and $xy=y^3=0\in M'$ show that $T\subset S$ is minimal ramified, $S=T+Ty=R+Rx+Ry$ is a local ring with maximal ideal $N:=M'+Ty=Rt+Rx+Ry$. Then  (Corollary~\ref{3.13}(2)) holds since $y^2\not\in R,\ MS=Rt+Rtx+Rty=Rt+Ry^2=M+Ry^2=M+N^2\subset N$ because $N^2=Ry^2$. Finally, $MN^2=0\subset M$. 
 
(3) Let $(R,M)$ be a SPIR such that $M=Rt\neq 0$, with $t^2=0$. Set $S:=R[Y]/(Y^2)$, which gives an extension $R\subset S$. Let $y$ be the class of $Y$ in $S$. Then, $(R:S)\neq M$ because $ty\not\in R$. Set $x:=ty\not\in R$ and  $T:=R[x]$. Then, $x^2=tx=0\in M$ show that $R\subset T$ is  minimal ramified. Moreover, $T=R+Rx$ is a local ring with maximal ideal $M':=M+Rx=Rt+Rx$. We claim that $T\subset S$. Deny, and assume that $y\in T$, so that $y\in M'$, since $y^2=0\in M'$. There exist $a,b\in R$ such that $y=at+bx=at+bty$, which gives $0\neq yt=at^2+bt^2y=0$, a contradiction. Now  $ty=x\in M'$ and $xy=ty^2=0\in M'$ show that $T\subset S$ is minimal ramified, $S=T+Ty=R+Ry$ is a local ring with maximal ideal $N:=M'+Ty=Rt+Ry$. Then (Corollary~\ref{3.13}(3)) holds  since $y^2=0\in R$. Moreover, $M+My=Rt+Rty=Rt+Rx=M'$  verifies  $\dim_{R/M}((M+My)/M)=\dim_{R/M}(M'/M)={\mathrm L}_R(M'/M)=1$ since $R\subset T$ is  minimal ramified  \cite[Lemma 5.4]{DPP2}.

 (4) Let $R$ be a DVD with maximal ideal $M=Rt$. Set $S:=R[Z]/(Z^2-Z,t^3Z)=R[z]$, where $z$ is the class of $z$ in $S$ and satisfies $z^2=z$ and $t^3z=0$. Set $x:=t^2z,\ y:=tz$, and $T:=R[x],\ T':=R[y]$. Since $x^2=tx=0$, we get by T$_1$ that $R\subset T$ is minimal ramified, so that $T$ is a local ring with maximal ideal $N:=M+Rx=Rt+Rx$. Now, $y^2=ty=x\in N$ and $yx=0$ show that $T\subset T'$ is minimal ramified, so that $T'$ is a local ring with maximal ideal $N':=N+Ty=Rt+Ry$. It follows that $R\subset T'$ is a subintegral simple extension of length 2 because $T'=R[y]$. Moreover, $M^2\subseteq (R:T')\subseteq M$. At last, $ty=x\not\in R$ shows that $(R:T')\neq M$ and $MT'=Rt+Rx=M+Ry^2=M+N'^2\subset N'$ and $MN'^2\subseteq M$. Then, (Corollary~\ref{3.13}(2)) shows that $[R,T']|=3$. 

Since $z^2-z=0\in N'$ and $tz=y\in N',\ ty=x\in N',\ tx=0\in N'$, we get that $T'\subset S$ is minimal decomposed by T$_1$, so that $T'={}_S^+T$. In particular, $T\subset S$ is an infra-integral $N$-crucial extension of length 2 with ${}_S^+T=T'\neq T,S$ by T$_2$. We also get that $R\subset S$ is  infra-integral with $T'={}_S^+R$.
 We claim that $|[R,S]|=4$. Deny, so that  there exists 
$T''\in]R,S[\setminus\{T,T'\}$. We may first  assume that $T''\subset S$ is minimal. Because of $T'\subset S$ minimal decomposed and ${}_S^+R=T'$, we have that $T''\subset S$ is ramified, since it cannot be decomposed.  Then, \cite[Lemma 17]{DPP4} yields that $MS$ is not a radical ideal of $S$ (only the FCP condition is necessary in the proof), a contradiction with $MS=Rt+Ry=N'=(T':S)$, the conductor of a decomposed  minimal extension. It follows that $R\subset T''$ is minimal with $T''\in [R,T']$, another contradiction since $|[R,T']|=3$. To conclude, $|[R,S]|=4$ with $(R:S)\neq M$, but $\ell[R,S]\neq 2$, showing that the condition $(R:S)=M$ is necessary to have $\ell[R,S]=2$ when $|[R,S]|=4$ and  $R\subset S$  infra-integral and $M$-crucial, with ${}_S^+R\neq R,S$.
\end{example}

  Consider now case (c) of (Definition~\ref{4.5}).

\begin{proposition}\label{3.14} Let $(R,M)$ be  a local ring and $R\subset S$  a seminormal infra-integral  $M$-crucial extension. The following conditions are equivalent:

\begin{enumerate}
\item $\ell[R,S]=2$.
\item $|\mathrm {Max}(S)|=3$.
\item $|[R,S]|=5$.
\end{enumerate} 
\end{proposition}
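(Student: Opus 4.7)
The plan is to prove the cycle $(1)\Rightarrow(2)\Rightarrow(3)\Rightarrow(1)$, using Proposition~\ref{3.623} and Proposition~\ref{3.5} as the central tools and a combinatorial count of intermediate rings in terms of set partitions of $\{1,\dots,n\}$.

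For $(1)\Rightarrow(2)$: since $R\subset S$ is infra-integral and $M$-crucial with $R$ local, Proposition~\ref{3.623} applies and gives $\mathrm{L}_R(N/M)+|\mathrm{Max}(S)|=3$ with $N:=\mathrm{Rad}(S)$. Its proof considers three subcases; the two with $|\mathrm{Max}(S)|\in\{1,2\}$ construct a minimal ramified extension as one of the links of the length-$2$ chain. By Proposition~\ref{3.5} no minimal step of a seminormal extension can be ramified, so only $|\mathrm{Max}(S)|=3$ (and $N=M$) survives.

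For $(2)\Rightarrow(3)$: the heart of the argument is to promote $|\mathrm{Max}(S)|=3$ to the equality $N:=\mathrm{Rad}(S)=M$, and I expect this step to be the main obstacle. Consider $T:=R+N$. Infra-integrality and $M$-crucial say that every maximal of $S$ lies over $M$ with residue field $k:=R/M$, so the Chinese Remainder Theorem gives $S/N\cong k^{3}$, and $T/N$ identifies with the diagonal copy of $R/M=k$. Hence $T$ is local with maximal ideal $N$ and residue field $k$, so $R\subseteq T$ is subintegral; seminormality (${}_S^{+}R=R$) then forces $T=R$, so $N\subseteq R\cap N=M$ and $N=M$. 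Consequently $M$ is an ideal of $S$, whence $(R:S)=M$ by Proposition~\ref{1.11}(1), and the poset $[R,S]$ identifies with $[R/M,S/M]=[k,k^{3}]$. The $k$-subalgebras of $k^{3}$ are in bijection with set partitions of $\{1,2,3\}$ (each block contributing one diagonal copy of $k$), giving $1+3+1=5$ subalgebras, so $|[R,S]|=5$.

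For $(3)\Rightarrow(1)$: finiteness of $[R,S]$ gives FIP, hence FCP, so there is a maximal chain $R=R_{0}\subset\cdots\subset R_{n}=S$ of minimal extensions. By Proposition~\ref{3.5} each step is decomposed, and each decomposed step increases the number of maximal ideals by exactly one, so $|\mathrm{Max}(S)|=n+1$. Re-running the subintegral/seminormal argument of $(2)\Rightarrow(3)$ on $R\subset S$ directly yields $[R,S]\cong[k,k^{n+1}]$, whose cardinality equals the number of set partitions of an $(n+1)$-element set. This sequence $1,1,2,5,15,\dots$ attains the value $5$ only at $n+1=3$, forcing $\ell[R,S]=n=2$.
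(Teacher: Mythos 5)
Your proof is correct and follows the same overall cycle $(1)\Rightarrow(2)\Rightarrow(3)\Rightarrow(1)$ and the same key identification $[R,S]\cong[R/M,S/M]=[k,k^{3}]$ as the paper, but several intermediate steps are argued differently. The paper obtains $(R:S)=M$ at the outset by quoting that a seminormal extension has radical conductor and then applying Proposition~\ref{1.11}; you instead prove $\mathrm{Rad}(S)=M$ by showing that $R\subseteq R+\mathrm{Rad}(S)$ is subintegral and invoking ${}_S^{+}R=R$ --- a correct and pleasantly self-contained substitute. For $(2)\Rightarrow(3)$ and $(3)\Rightarrow(1)$ the paper cites \cite[Proposition 4.16]{DPP3} for $|[k,k^{n}]|=B_{n}$ and \cite[Lemma 5.4]{DPP2} for the length computation; your direct bijection between $k$-subalgebras of $k^{n}$ and set partitions, together with the strict growth of the Bell numbers, replaces those citations and is sound (every $k$-subalgebra of $k^{n}$ is Artinian reduced with all residue fields equal to $k$ via the coordinate projections, hence is the ``partial diagonal'' attached to the partition by equality of projection kernels). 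The one place where you take an unnecessarily roundabout path is $(1)\Rightarrow(2)$: rather than appealing to the internal subcases of the \emph{proof} of Proposition~\ref{3.623}, it is more direct (and is what the paper does) to take a maximal chain $R\subset T\subset S$ of two minimal steps, note that Proposition~\ref{3.5}(2) forces both to be decomposed, and conclude $|\mathrm{Max}(S)|=3$ since each decomposed step of an integral extension adds exactly one maximal ideal; your detour still works, because in the excluded subcases one produces either a ramified link of a maximal chain or subintegrality of $R\subset S$, both incompatible with seminormality, but it depends on the argument inside \ref{3.623} rather than on its statement.
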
 

\begin{proof} Since $R\subset S$ is seminormal,  $(R:S)$ is a radical ideal of $R$ and $S$  \cite[Lemma 4.8]{DPP2}. Then (Proposition~\ref{1.11}) entails that $(R:S)\in\mathrm{Max}(R)$ because $R\subset S$ is integral $M$-crucial, so that $(R:S)=M$. 

(1) $\Rightarrow$ (2) If $\ell[R,S]=2$,  there exists $T\in[R,S]$ such that $R\subset T$ and $T\subset S$ are minimal decomposed. It follows that $|\mathrm{Max}(S)|=3$. 

(2) $\Rightarrow$ (3) Setting $\mathrm {Max}(S):=\{M_1,M_2,M_3\}$, we have  $M=\cap_{i=1}^3M_i$, so that $S/M\cong\prod_{i=1}^3 S/M_i\cong (R/M)^3$. Then $|[R,S]|=|[R/M,S/M]|=B_3=5$  \cite[Proposition 4.16]{DPP3}, where $B_3$ is the 3rd Bell number.

(3) $\Rightarrow$ (1) If $|[R,S]|=5=B_3$, we get that  $R\subset S$ has FIP, so that $|\mathrm {Max}(S)|=3$, giving (1) \cite[Lemma 5.4]{DPP2} and \cite[Proposition 4.16]{DPP3}. 
\end{proof}

\begin{proposition}\label{3.141} Let $R\subset S$ be a seminormal infra-integral  $M$-crucial extension. Then the following conditions are equivalent:

\begin{enumerate}
\item $\ell[R,S]=2$.
\item $|\mathrm V(MS)|=3$.
\item $|[R,S]|=5$.
\end{enumerate} 
\end{proposition}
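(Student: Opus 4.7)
The plan is to reduce to the local case already handled in Proposition~\ref{3.14} by localizing at $M$, using the machinery of Proposition~\ref{3.11}. This is the exact same strategy used to pass from Proposition~\ref{3.623} to Proposition~\ref{3.63}, and from Corollary~\ref{3.61} to Corollary~\ref{3.62}.

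First I would verify that the hypotheses transfer: since $R\subset S$ is $M$-crucial we have $\mathrm{MSupp}(S/R)=\{M\}$, so Proposition~\ref{3.11}(1) gives $\ell[R,S]=\ell[R_M,S_M]$ and $|[R,S]|=|[R_M,S_M]|$, and Proposition~\ref{3.11}(4) yields $\mathrm{Max}(S_M)=\{NR_M\mid N\in\mathrm{V}(MS)\}$, whence $|\mathrm{V}(MS)|=|\mathrm{Max}(S_M)|$. Moreover, seminormality and infra-integrality are stable under localization (Definition~\ref{3.4}), so $R_M\subset S_M$ is seminormal and infra-integral, and it is trivially $M_M$-crucial since localization preserves the support condition.

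With these identifications the three conditions of the proposition translate exactly into the three conditions of Proposition~\ref{3.14} applied to the local extension $R_M\subset S_M$:
\begin{itemize}
\item[(1)] $\ell[R,S]=2\iff \ell[R_M,S_M]=2$;
\item[(2)] $|\mathrm{V}(MS)|=3\iff |\mathrm{Max}(S_M)|=3$;
\item[(3)] $|[R,S]|=5\iff |[R_M,S_M]|=5$.
\end{itemize}
Proposition~\ref{3.14} then provides the equivalence of the three localized statements, closing the argument.

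I do not expect any serious obstacle here: all the work has already been done either in Proposition~\ref{3.14} (the local characterization, which uses the crucial fact that $(R:S)=M$ by seminormality and the Bell number count $B_3=5$) or in Proposition~\ref{3.11} (the localization bijection $T\mapsto T_M$). The only thing to double-check is that $R_M\subset S_M$ genuinely inherits the $M_M$-crucial label; this follows because $\mathrm{Supp}_R(S/R)=\{M\}$ forces $\mathrm{Supp}_{R_M}(S_M/R_M)=\{M_M\}$ by the standard compatibility of support with localization.
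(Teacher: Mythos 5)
Your proposal is correct and is exactly the paper's argument: the paper's proof of this proposition is literally ``Use (Proposition~\ref{3.14}) and (Proposition~\ref{3.11})'', and you have simply spelled out the translation (the poset isomorphism $T\mapsto T_M$, the identification $|\mathrm{V}(MS)|=|\mathrm{Max}(S_M)|$, and the stability of seminormality and infra-integrality under localization noted in Definition~\ref{3.4}), just as the paper does in the analogous passages from Proposition~\ref{3.623} to Proposition~\ref{3.63} and from Corollary~\ref{3.61} to Corollary~\ref{3.62}. No gaps.
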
 

\begin{proof}  Use (Proposition~\ref{3.14}) and (Proposition~\ref{3.11}).
\end{proof}

We may remark that the results hold as well for simple extensions as for co-pointwise extensions.

\begin{corollary}\label{3.142} Let $R\subset S$ be a seminormal infra-integral  $M$-crucial extension of length 2. Then $R\subset S$ is simple if and only if $|R/M|\neq 2$.
\end{corollary}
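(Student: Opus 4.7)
The plan is to exploit the dichotomy ``co-pointwise minimal vs.\ simple'' from Proposition~\ref{2.2}(1) together with the explicit classification of co-pointwise minimal extensions in Proposition~\ref{2.4}, after identifying the algebra structure of $S/M$.

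First I would localize at $M$ via Proposition~\ref{3.11}, which preserves both the length and whether the extension is simple, reducing to the case $(R,M)$ local. The proof of Proposition~\ref{3.14} already establishes $M = (R:S)$ in this setting: seminormality makes $(R:S)$ a radical ideal, and Proposition~\ref{1.11}(2) combined with the integral $M$-crucial hypothesis forces $\sqrt{(R:S)} = M$. Consequently $[R,S] \cong [R/M,\,S/M]$ as posets, and simplicity transfers through this isomorphism (since $M \subseteq R$, every $R$-subalgebra of $S$ contains $M$, and $S = R[x]$ iff $S/M = (R/M)[\bar x]$). Writing $k := R/M$, the same proof identifies $S/M \cong k^3$, using $M = \bigcap_{i=1}^3 M_i$ and the fact that the residues $S/M_i$ are all isomorphic to $k$ by infra-integrality. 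Thus the question reduces to: for which fields $k$ is the ring extension $k \subset k^3$ simple?

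Next, by Proposition~\ref{2.2}(1) the extension $R\subset S$ is either co-pointwise minimal or simple, mutually exclusively. Inspecting the three cases of Proposition~\ref{2.4}, the possible shapes of $S/M$ for a co-pointwise minimal $M$-crucial extension are (1) $k = \mathbb{Z}/2\mathbb{Z}$ with $S/M \cong k^3$, (2) $S/M$ a field of degree $p^2$ over $k$, or (3) $S/M \cong k[X,Y]/(X^2,XY,Y^2)$, a local ring. Our $S/M \cong k^3$ has three maximal ideals, so (2) and (3) are excluded, leaving only (1). Hence $R\subset S$ is co-pointwise minimal iff $|k| = 2$, equivalently $R\subset S$ is simple iff $|R/M| \neq 2$.

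No step appears delicate: the reduction modulo $M$ is cleanly handled by $M = (R:S)$, and the three cases of Proposition~\ref{2.4} are separated purely by the algebraic shape of $S/M$. As an alternative self-contained check, one can verify directly that for $x = (a,b,c) \in k^3$ the subalgebra $k[x]$ equals $k^3$ iff the Vandermonde determinant $(b-a)(c-a)(c-b)$ is nonzero, iff $a,b,c$ are pairwise distinct, iff $|k| \geq 3$.
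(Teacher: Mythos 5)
Your proposal is correct and follows essentially the same route as the paper: the paper's proof is a one-line appeal to the dichotomy of Proposition~\ref{2.2}(1) and the classification in Proposition~\ref{2.4}, noting that $S/M\cong (R/M)^3$ matches only case (1) there, which forces $|R/M|=2$. Your write-up merely makes explicit the steps the paper leaves implicit (that $M=(R:S)$, the reduction to $k\subset k^3$, and why cases (2) and (3) of Proposition~\ref{2.4} are ruled out), and the closing Vandermonde computation is a pleasant independent confirmation of the same fact.
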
 

\begin{proof}   (Proposition \ref{2.4})  excludes co-pointwise minimal extensions since $S/M\cong (R/M)^3$.
\end{proof}

It remains to consider  when $R\subset S$ is  t-closed,  integral and  $M$-crucial, {\it i.e.} the case (a)  of (Definition \ref{4.5}). Then $(R:S)=M$, by the same reasoning as in the previous proof since $R\subset S$ is seminormal and $M\in\mathrm {Max}(S)$  \cite[Lemme 3.10]{Pic 1}. Because of the bijection $[R,S]\to [R/M,S/M]$, where $R/M\subset S/M$ is a field extension, we can reduce  our study to field extensions of length 2, which is achieved in next section. 

\section{ Field extensions of length 2} We will call in this paper {\it radicial} any purely inseparable field extension. We recall that a minimal field extension is either separable or radicial (\cite[p. 371]{Pic}). We will use the separable closure of an algebraic field extension.

\subsection{Finite non separable field extensions}

 Let $k\subset L$ be a  finite radicial extension and  $p:=\mathrm{c}(k)\in\mathbb P$. Then $[L:k]=p^n$ for some positive integer $n$,  $\ell[k,L]=n$ and any maximal chain of subextensions of $k\subset L$ has length $n$. Moreover, $k\subset L$ is minimal if and only if $[L:k]=p$. For a positive integer $n$, a radicial extension $k\subset L$ of fields is said to have height $n$ if $x^{p^n}\in k$ for each $x\in L$, and there exists $y\in L$ such that $y^{p^{n-1}}\not\in k$, 
 \cite[Proposition 1, A V.23]{Bki A}.

\begin{proposition}\label{3.15} Let $k\subset L$ be a finite  radicial field extension and  $p:=\mathrm{c}(k) \in \mathbb P$. Then   $\ell[R,S]=2$ if and only if either $k\subset L$ is  simple  of height 2 or  co-pointwise minimal.

If these conditions hold, then  $|[k,L]|=3$ when $k\subset L$ is a simple and $|[k,L]|=\infty$ when $k\subset L$ is a co-pointwise  minimal extension.
\end{proposition}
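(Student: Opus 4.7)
The plan is to first reduce the length hypothesis to a degree condition. Since $k \subset L$ is finite radicial with $\mathrm{c}(k) = p$, the opening remarks of Section 5 give $[L:k] = p^n$ and $\ell[k,L] = n$, so $\ell[k,L] = 2$ is equivalent to $[L:k] = p^2$. The real content is then the structural dichotomy under this degree assumption.

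For the forward implication, I would split on whether $k \subset L$ is simple. If $L = k[x]$, the minimal polynomial of $x$ has degree $p^2$ of the form $X^{p^2} - x^{p^2}$, so $x^{p^2} \in k$; and necessarily $x^p \notin k$, because otherwise $[k[x]:k] \leq p$. A direct Frobenius calculation on the basis $\{1,x,\ldots,x^{p^2-1}\}$ gives $y^{p^2} \in k$ for all $y \in L$, so the height is exactly $2$. If $k \subset L$ is not simple, then for any $t \in L \setminus k$ the subfield $k[t]$ is proper with $[k[t]:k] \mid p^2$ and $> 1$, so $[k[t]:k] = p$, forcing $t^p \in k$. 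Thus $L^p \subseteq k$, and condition (2) of Proposition~\ref{2.4} (applied with maximal ideal $0$) tells us $k \subset L$ is co-pointwise minimal.

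For the converse, the simple height-$2$ case yields $[L:k] = p^2$ because $L = k[x]$ has $x^{p^2} \in k$ (by height) but $x^p \notin k$ (else height $\leq 1$), pinning the degree of the minimal polynomial at $p^2$; and the co-pointwise minimal case gives $[L:k] = p^2$ straight from Proposition~\ref{2.4}(2).

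For the cardinalities, in the simple height-$2$ case I would exhibit the chain $k \subset k[x^p] \subset L$, so $|[k,L]| \geq 3$, and prove uniqueness of the degree-$p$ subfield: two distinct ones $F = k[y]$, $F' = k[z]$ with $y^p,z^p \in k$ would compose to $L = k[y,z]$, in which every element $w = \sum c_{ij} y^i z^j$ satisfies $w^p = \sum c_{ij}^p (y^p)^i (z^p)^j \in k$, forcing height $1$ and contradicting height $2$. In the co-pointwise minimal case, $k$ must be infinite (finite fields are perfect, hence admit no nontrivial radicial extension, yet $L \neq k$); writing $L = k[x,y]$ as in Proposition~\ref{2.4}(2), the subfields $k[x + cy]$, $c \in k$, are pairwise distinct, since an equality $k[x + cy] = k[x + c'y]$ with $c \neq c'$ would drag both $y$ and $x$ into a single degree-$p$ subfield, contradicting $[L:k] = p^2$. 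The principal obstacle is the uniqueness argument in the simple case, where one must exclude the existence of a second degree-$p$ subfield by the Frobenius-on-composite computation.
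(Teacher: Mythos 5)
Your proof is correct and follows essentially the same route as the paper: the same dichotomy between the simple and co-pointwise minimal cases, the same height-2 characterization, and the same count $|[k,L]|=3$ via uniqueness of the intermediate field. The only (harmless) variations are at the level of micro-arguments: you prove uniqueness of the degree-$p$ subfield by a compositum-plus-Frobenius computation where the paper argues via the degree of the minimal polynomial of the generator over a hypothetical second subfield $K'$, and you exhibit the infinite family $k[x+cy]$ explicitly where the paper simply invokes the Primitive Element Theorem.
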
 

\begin{proof} 

  Assume that $\ell[R,S]=2$. If $L$ is simple, there exists some $y\in L$ such that $L=k[y]$, which satisfies $y^{p^n}\in k$ for a least  integer $n$. In particular, for any $z\in L$, we have $z^{p^n}\in k$, so that $n$ is the height of the extension $k\subset L$. Let $K\in[k,L]$ be such that $k\subset K$ and $K\subset L$ are minimal field extensions. Then, they are both radicial field extensions of degree $p$. In particular, $y^p\in K$ and $(y^p)^p=y^{p^2}\in k$. It follows that for any $z\in L$, we have $z^{p^2}\in k$, so that $k\subset L$  of height 2. 

If $k\subset L$ is not simple, then it is a  co-pointwise minimal extension.

 Conversely, assume that  $k\subset L$ is a simple extension of height 2 such that $L=k[y]$, then $y^{p^2}\in k$ and $y^p\not\in k$. Set $K:=k[y^p]$.  Then,  $k\subset K$ and $K\subset L$ are minimal field extensions of degree $p$.  
Assume that there exists $K'\in]k,L[$. Then, $y\not\in K'$. If $y^p\in K'$, we get that $K\subseteq K'\subset L$, so that $K'=K$. If $y^p\not\in K'$, then, $[K':L]=p^2$, giving $k=K'$, a contradiction. Then, $[k,L]=\{k,K,L\}$,Ê $|[k,L]|=3$ and $\ell[R,S]=2$.

If $k\subset L$ is a  co-pointwise  minimal extension, then $\ell[R,S]=2$.

  Since a co-pointwise minimal extension  $k\subset L$   is not  simple,   the Primitive Element Theorem asserts that $|[k,L]|=\infty$.  
\end{proof}

We do not give here a special example since such an extension is of the form $k\subset k[y]$, with $y^{p^2}\in k$, where $p:=\mathrm{c}(k) \in \mathbb P$. 

Let $k\subset L$ be a finite field extension and $L_s$ (resp. $L_r$) be the separable (resp. radicial) closure of $k$ in $L$.   

\begin{definition}\label{except}  \cite{G} A finite field extension $k\subset L$ is said to be {\it exceptional} if $k=L_r$ and $L_s\neq L$. 
\end{definition}

\begin{proposition}\label{3.16} Let $k\subset L$ be a finite  field extension, which is neither separable, nor radicial. Then  $\ell[k,L]=2$ if and only if  one of the following condition holds:
\begin{enumerate}
\item $k\subset L$ is  exceptional and $|[k,L]|=3$.
\item $k\subset L$ is not exceptional and $|[k,L]|=4$.
\end{enumerate} 

If the above conditions hold, then $k\subset L$ is a simple extension. 
\end{proposition}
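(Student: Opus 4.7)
The plan is to exploit the canonical intermediate fields $L_s$ and $L_r$, the separable and radicial closures of $k$ in $L$. Since the extension is mixed, $k \subsetneq L_s \subsetneq L$ (the first inclusion is strict because the extension is not radicial, the second because it is not separable), which already exhibits a chain of length at least $2$.

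First I would establish simplicity: by Proposition~\ref{2.2} a length-$2$ extension is co-pointwise minimal or simple, but the classification in Proposition~\ref{2.4} forces any co-pointwise minimal field extension into case~(2), which is radicial, contradicting the mixed hypothesis. Hence $k \subset L$ is simple whenever $\ell[k,L] = 2$.

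For the forward direction, assume $\ell[k,L] = 2$. The chain $k \subset L_s \subset L$ must then be maximal, so both $k \subset L_s$ and $L_s \subset L$ are minimal. In the exceptional case ($L_r = k$), every $K \in [k,L]$ has its radicial closure $K_r$ contained in $L_r = k$, so $K$ is separable over $k$, whence $K \subseteq L_s$; minimality of $k \subset L_s$ gives $K \in \{k, L_s\}$, so $[k,L] = \{k, L_s, L\}$ and $|[k,L]| = 3$. In the non-exceptional case ($L_r \neq k$) one has $L_r \neq L$ (otherwise the extension would be radicial) and $L_s \cap L_r = k$, so $k, L_s, L_r, L$ are pairwise distinct; the chain $k \subset L_r \subset L$ then also has length $2$, making $k \subset L_r$ and $L_r \subset L$ minimal. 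For an arbitrary $K \in [k,L]$, its separable (resp.\ radicial) closure over $k$ lies in $L_s$ (resp.\ $L_r$), so by minimality $K_s \in \{k, L_s\}$ and $K_r \in \{k, L_r\}$. Three of the four combinations yield $k, L_s, L_r$ directly; the remaining combination $(K_s, K_r) = (L_s, L_r)$ gives $K \supseteq L_s \cdot L_r$, and since $L_s \cdot L_r$ strictly contains $L_s$ (as $L_r \not\subseteq L_s$) and lies in $L$, minimality of $L_s \subset L$ forces $L_s \cdot L_r = L$, hence $K = L$. Therefore $|[k,L]| = 4$.

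The converse is then immediate: in each case the cardinality hypothesis together with the distinctness of the named intermediate fields determines $[k,L]$ explicitly, and since $L_s$ and $L_r$ are incomparable (as $L_s \cap L_r = k$ differs from both), every maximal chain has length exactly $2$, witnessed by $k \subset L_s \subset L$. The main obstacle is the classification of arbitrary intermediate fields by the pair $(K_s, K_r)$ in the non-exceptional case; the key subtlety there is the identity $L_s \cdot L_r = L$, which rests on the minimality of $L_s \subset L$.
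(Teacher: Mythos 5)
Your overall strategy---organizing $[k,L]$ around the separable and radicial closures $L_s$ and $L_r$---is the same as the paper's, and your non-exceptional case, your converse, and your derivation of simplicity from Propositions~\ref{2.2} and~\ref{2.4} are all sound. But there is a genuine error in your exceptional case. You infer: ``every $K\in[k,L]$ has its radicial closure $K_r$ contained in $L_r=k$, so $K$ is separable over $k$.'' That implication is false, and its failure is precisely the content of the word \emph{exceptional}: an exceptional extension has trivial radicial closure without being separable. Applied to $K=L$ itself, your inference would show that $k\subset L$ is separable, contradicting the standing hypothesis; so as written the step proves too much and cannot stand.

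The conclusion $[k,L]=\{k,L_s,L\}$ is still correct, and there are two ways to repair the step. The paper's way: for $K\in\,]k,L[\,$ with $K\neq L_s$, the hypothesis $\ell[k,L]=2$ makes $k\subset K$ \emph{minimal}, and a minimal field extension is either separable or radicial (recalled at the start of Section 5); separable forces $K\subseteq L_s$, giving the impossible chain $k\subset K\subset L_s\subset L$, while radicial forces $K\subseteq L_r=k$. Alternatively, your own $(K_s,K_r)$ bookkeeping from the non-exceptional case already suffices here: since $L_r=k$ one always has $K_r=k$, and $K_s\in\{k,L_s\}$ by minimality of $k\subset L_s$; if $K_s=k$ then $K$ is purely inseparable over $k$, hence $K=K_r=k$, and if $K_s=L_s$ then $L_s\subseteq K\subseteq L$ and minimality of $L_s\subset L$ gives $K\in\{L_s,L\}$. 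Either repair closes the gap; what you must not do is deduce separability of $K$ from $K_r=k$ alone.
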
 

\begin{proof} Since $k\subset L$ is neither separable nor radicial, $L_s\neq k,L$. Moreover, $k\subset L$ is not co-pointwise minimal by (Proposition ~\ref{2.4}). 

Assume first that $\ell[k,L]=2$. It follows that $k\subset L_s$ and $L_s\subset L$ are minimal. Assume that there exists some $K\in]k,L[\setminus\{ L_s\}$  so that $k\subset K$ is minimal. Then, either $k\subset K$ is separable, or $k\subset K$ is radicial. If $k\subset K$ is separable, then, $K\subseteq L_s$, and we get a contradiction, so that $k\subset K$ is radicial and $K\subseteq L_r$. If $k\subset L$ is exceptional, we get again a contradiction and then $|[k,L]|=3$. If $k\subset L$ is not exceptional, then $K\in[k,L_r]$  and we have the tower $k\subset K\subseteq L_r\subset L$ since $k\subset L$ is not radicial.   
Then, 
 $K=L_r$,  
  $[k,L]=\{k,L_s,L_r,L\}$ and $|[k,L]|=4$.

Conversely, assume that either $k\subset L$ is exceptional and $|[k,L]|=3$ or $k\subset L$ is not exceptional and $|[k,L]|=4$. In the first case, we have obviously $\ell[R,S]=2$. In the second case, we have $L_r\in]k,L[\setminus\{ L_s\}$, so that $[k,L]=\{k,L_s,L_r,L\}$. As $L_s$ and $L_r$ are not comparable, it follows that $k\subset L_s,\ L_s\subset L,\ k\subset L_r$ and $L_r\subset L$ are all minimal extensions, so that $\ell[k,L]=2$.
\end{proof}

\begin{example} (1) \cite[Ex. 3, A V.144]{Bki A}. Let $F$ be a field with $p:=\mathrm{c}(k)>2$ and let $k:=F(X,Y)$  be the rational function field in two indeterminates  $X,Y$ over $F$. Set $L:=k[\alpha]$, where $\alpha$ is a zero of the polynomial $f(Z):=Z^{2p}+XZ^p+Y$. Then $k\subset L$ is exceptional and $[L:k[\alpha^p]]=p$. Set $K:=k[\alpha^p]$. It follows that $k\subset K$ is minimal separable and $K\subset L$ is minimal radicial, so that $L_s=K$. Moreover, $L_r=k$ since $k\subset L$ is exceptional. To end, $|[k,L]|=3$ because $[k,L]=\{k,K,L\}$, giving that $\ell[k,L]=2$.

(2) The following example, due to Morandi \cite[Example 4.18, p.46]{ M}, illustrates case (2). Let $k:=\mathbb{F}_2(X)$ be the rational function field in one indeterminate over $\mathbb{F}_2$ and set $L:=k[\alpha]$, where $\alpha^6=X$. Then, $L_s:=k[\alpha^2]$ is the separable closure of $k$ in $L$ and $L_r:=k[\alpha^3]$ is the radicial closure of $k$ in $L$. It follows that all the extensions $k\subset L_r,\ k\subset L_r,\ L_r\subset L$ and $L_s\subset L$ are minimal. Moreover, $k\subset L$ is not exceptional. At last, $|[k,L]|=4$ since $[k,L]=\{k,L_r,L_s,L\}$. Deny and assume that there exists some $K\in]k,L[\setminus\{L_r,L_s\}$. We cannot have $k\subset K$ minimal, because in this case it would be either radicial, or separable, giving $K=L_r$ or $L_s$, a contradiction. Then, either $L_r\subset K$ or $L_s\subset K$, a contradiction. Hence $|[k,L]|=4$ and $\ell[k,L]=2$.
\end{example}

 \subsection{Finite  separable field extension}
 
 The last case to consider is  a finite separable field  $k \subset L$ extension of length 2. We need some new concept that will allow us to characterize minimal separable field extensions, namely the family of generating  principal subfields of $L$ introduced by van Hoeij, Kl\"uners and Novocin in \cite{HKN}.  The set of monic polynomials of $k[X]$ is denoted by $k_u[X]$.

From now on, our riding hypotheses for the section will be: $L:=k[x]$ is a separable (FIP) field extension of $k$  with degree $n$ and $f(X)\in k_u[X]$ is the minimal polynomial of $x$ over $k$. If $g(X)\in L_u[X]$ divides $f(X)$, we denote by $K_g$ the $k$-subalgebra of $L$ generated by the coefficients of $g$.  For any $K\in[k,L]$, we denote by $f_K(X)\in K_u[X]$ the minimal polynomial of $x$ over $K$. The proof of the Primitive Element Theorem shows that $K=K_{f_K}\ (*)$. Of course, $f_K(X)$ divides $f (X)$ in $K[X]$ (and in $L[X]$). If $f(X):=(X-x)f_1(X)\cdots f_r(X)$ is the decomposition of $f(X)$ into irreducible factors of $L_u[X]$, we  set $\mathcal F:=\{f_1,\ldots,f_r\}$  because the $f_{\alpha}'$s are different by separability. There are ring morphisms $p_{\alpha}:k[X]/(f(X))\cong L\to L[X]/(f_{\alpha}(X))$ for $\alpha=1,\dots,r$. If $L_{\alpha}$ is the pullback field associated to $p_{\alpha}$ and $L\to L[X]/(f_{\alpha}(X))$, we get subextensions $k\subseteq L_{\alpha}$ of $k\subseteq L$ according to the following diagram: 

$$\begin{matrix}
  k & \longrightarrow & L_{\alpha}    & \rightarrow                            &       L           \\
 {} & \searrow{}         & \downarrow &        {}                                    & \downarrow \\
 {} &        {}                &   k[X]/(f(X))  & \underrightarrow{\ p_{\alpha}\ } & L[X]/(f_{\alpha}(X))
\end{matrix}$$

The $L_{\alpha}$s are called the {\it principal subfields} of $k\subset L$. As we will see later, it may be that $L_{\alpha}=L_{\beta}$ for some $\alpha\neq \beta$. To get rid of this situation, we define $\Phi:\mathcal F\to [k,L]$ by $\Phi(f_{\alpha})= L_{\alpha}$. If $t:=|\Phi(\mathcal F)|$, we set $\Phi(\mathcal F):=\{E_1,\ldots,E_t\}:=\mathcal E$ and $m_{\beta}:=f_{E_{\beta}}$ for $\beta\in\mathbb N_t$.

 For $K\in[k,L[$ , we set $I(K):=\{\alpha\in  \mathbb N_r \mid f_K(X)/ f_{\alpha}(X) \in L_u[X] \}$ and we say that $I(K)$ is the subset of $\mathbb N_r$ {\it associated to} $f_K$. In particular, $I(k)=\mathbb N_r$. We also set $J(K):=\{\beta\in  \mathbb N_t\mid \Phi(f_{\alpha})=E_{\beta}$ for all $\alpha\in I(K)\}$.  For $\beta\in \mathbb N_t$, we define $\Gamma(\beta)$ as the set of $\alpha$ such that the $f_\alpha$ are in the same class of equivalence for the equivalence relation associated to $\Phi$, that is $\Gamma(\beta):=\{\alpha\in\mathbb N_r\mid E_{\beta}=\Phi(f_\alpha)\}$. Each $K\in[k\, L]$ is an intersection of some of the $E_{\beta}$s \cite[Theorem 1]{HKN} and more precisely, the proof of this theorem gives the next result.
 
\begin{theorem}\label{4.210} Let $K\in[k,L[$. Then,  $f_K(X)=(X-x)\prod_{\alpha\in I(K)}f_{\alpha}(X)$ and $K=\{g(x)\in L\mid g(X)\in k[X], g(X)\equiv g(x)(f_K(X))\}=\cap_{\beta\in J(K)}E_{\beta}$.
\end{theorem}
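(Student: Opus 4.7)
My approach is to exploit two facts: $f_K$ is the minimal polynomial of $x$ over $K$, and by separability $f$ factors into pairwise distinct monic irreducibles in $L[X]$. First, since $f_K \in K[X] \subseteq L[X]$ is a monic divisor of $f$ having $x$ as a root, unique factorization in $L[X]$ forces $f_K(X) = (X-x)\prod_{\alpha \in I(K)} f_\alpha(X)$, which matches the very definition of $I(K)$ as the set of indices of irreducible factors of $f$ that divide $f_K$ in $L[X]$.

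For the middle equality, I would use Euclidean division by the monic polynomial $f_K$. Given $\ell = g(x) \in K$ with $g \in k[X] \subseteq K[X]$, I write $g = q f_K + r$ with $q, r \in K[X]$ and $\deg r < \deg f_K$. Evaluating at $x$ yields $r(x) = \ell \in K$, so $r(X) - \ell \in K[X]$ has $x$ as a root of degree strictly below the minimal polynomial; this forces $r = \ell$ as polynomials, hence $g(X) - g(x) = q(X) f_K(X)$. For the converse, if $f_K(X) \mid g(X) - g(x)$ in $L[X]$ for some $g \in k[X]$, the same decomposition $g = q f_K + r$ in $K[X]$ yields $f_K \mid r(X) - g(x)$ in $L[X]$ with $\deg(r - g(x)) < \deg f_K$, forcing $r$ to equal the constant polynomial $g(x)$; since $r \in K[X]$, this places $g(x)$ in $K$.

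For the intersection formula, I rely on the pullback definition of $L_\alpha$, which unravels (just as above, with $f_\alpha$ playing the role of $f_K$) to $L_\alpha = \{g(x) : g \in k[X],\ f_\alpha(X) \mid g(X) - g(x)\text{ in }L[X]\}$. Interpreting $J(K)$ as the image $\Phi(\{f_\alpha : \alpha \in I(K)\}) \subseteq \mathbb{N}_t$, the inclusion $K \subseteq \bigcap_{\beta \in J(K)} E_\beta$ is immediate from $f_\alpha \mid f_K$ for each $\alpha \in I(K)$, combined with the middle equality. For the reverse inclusion, an element $\ell = g(x)$ lying in every $E_\beta$ satisfies $f_\alpha \mid g(X) - g(x)$ in $L[X]$ for every $\alpha \in I(K)$; combined with the automatic divisibility by $X - x$, and using that $X-x$ and $(f_\alpha)_{\alpha \in I(K)}$ are pairwise coprime distinct irreducibles in $L[X]$, their product $f_K$ divides $g(X) - g(x)$, so the middle equality gives $\ell \in K$.

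The main technical hurdle is the converse direction of the middle assertion: one must transfer a divisibility statement that lives in $L[X]$ into an algebraic membership statement about $K$. The trick of performing the Euclidean division inside $K[X]$ rather than $L[X]$ and invoking the uniqueness of the remainder is what bridges the two rings, and once that is in place the third assertion is a pure coprimality argument applied to the complete factorization of $f$ in $L[X]$.
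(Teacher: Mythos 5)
Your proof is correct; note that the paper supplies no argument of its own for this statement, simply importing it from \cite[Theorem 1]{HKN} with the remark that the proof of that theorem yields it, and your reconstruction --- Euclidean division by $f_K$ carried out inside $K[X]$ to get the middle description of $K$, then coprimality of the pairwise distinct irreducible factors of the separable polynomial $f$ to pass between divisibility by the individual $f_\alpha$, $\alpha\in I(K)$, and divisibility by their product $f_K$ --- is exactly the standard argument behind that citation. The only tacit point worth flagging is that membership of $g(x)$ in a principal subfield $L_\alpha$ does not depend on the chosen representative $g$ (two representatives of the same element of $L=k[x]$ differ by a multiple of $f$, hence of each $f_\alpha$), which you use implicitly in the reverse inclusion $\bigcap_{\beta\in J(K)}E_\beta\subseteq K$.
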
 

It follows that $E_{\beta}=\{g(x)\in L\mid g(X)\in k[X],\  g(X)\equiv g(x)\ (f_{\alpha}(X))$ for any $\alpha$ such that $\Phi(f_{\alpha})=E_{\beta}\}.$ In the following, we write $K_{\alpha}:=K_{g_{\alpha}}$, where $g_{\alpha}(X):=(X-x)f_{\alpha}(X)$. In fact, $k=\cap_{\alpha=1}^rL_{\alpha}=\cap_{\beta=1}^tE_{\beta}.$

\begin{remark}  Let $R\subset S$ be a  t-closed $M$-crucial extension such that $R/M\subset S/M$ is a separable field extension. Because of the bijection $[R,S]\to [R/M,S/M]$ defined by $T\mapsto T/M$, there  exists a finite family $\{T_{\alpha}\}\subset [R,S[$ such that each element of $[R,S[$ is an intersection of some of the $T_{\alpha}$'s by (Theorem~\ref{4.210}).
\end{remark}

\begin{lemma}\label{4.22} Let $g(X)\in L_u[X]$ dividing $f(X)$ and such that  $g(X):=(X-x)g'(X)$ with $g'(X)$ irreducible and $K_g\neq L$, then $K_g\subset L$ is  minimal and   $g=f_{K_g}$. 
\end{lemma}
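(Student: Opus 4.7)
The plan is to exploit the factorization $g = (X-x)g'(X)$ in $L[X]$ together with the identity $K = K_{f_K}$ marked $(*)$ in the text. Let $F$ be any field with $K_g \subseteq F \subseteq L$, and let $f_F \in F[X]$ be the minimal polynomial of $x$ over $F$. Since the coefficients of $g$ lie in $K_g \subseteq F$, we have $g \in F[X]$, and $g(x) = 0$, so $f_F$ divides $g$ in $F[X]$ and thus in $L[X]$.

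First I would dispose of a degenerate case by observing that $g'(x) \neq 0$: otherwise, since $g'$ is irreducible in $L[X]$ and $x \in L$, we would have $g'(X) = X-x$, making $(X-x)^2$ divide $g$ and hence $f$; this contradicts separability of $f$. Hence $(X-x)$ and $g'(X)$ are coprime in $L[X]$, and the only monic divisors of $g$ in $L[X]$ that vanish at $x$ are $X-x$ and $g(X)$ itself.

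Next, I split into these two cases for $f_F$. If $f_F(X) = X - x$, then $x \in F$, forcing $F = L$. If $f_F(X) = g(X)$, then applying $(*)$ to $F$ gives $F = K_{f_F} = K_g$. In either case $F \in \{K_g, L\}$, so $K_g \subset L$ is minimal; taking $F = K_g$ in the second case yields $f_{K_g} = g$, which is the remaining claim.

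I do not expect a serious obstacle here; the only subtle point is the separability argument that rules out $g'(x)=0$, so that the factorization of $g$ in $L[X]$ has exactly two distinct irreducible factors and the divisor lattice in $L[X]$ is short enough to pin down $f_F$.
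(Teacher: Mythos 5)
Your proof is correct and follows essentially the same route as the paper's: both arguments rest on the fact that $f_F$ is a monic divisor of $g=(X-x)g'$ vanishing at $x$, that separability of $f$ rules out $g'(x)=0$ so irreducibility of $g'$ leaves only $X-x$ and $g$ as candidates, and that the identity $(*)$ then pins down $F=K_g$. The only cosmetic difference is that you run the argument for an arbitrary intermediate field $F\in[K_g,L]$, whereas the paper first selects $K\in[K_g,L]$ with $K\subset L$ minimal; the substance is identical.
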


\begin{proof} Since $g'(X)\in L_u[X]$ is irreducible, divides $f(X)$ and is such that $g'(x)\neq 0$, it follows that $g'=f_{\alpha}$ for some ${\alpha}\in\{1,\ldots,r\}$. Assume that $K_g\neq L$, and let $K\in[K_g,L]$ be such that $K\subset L$ is minimal, so that $g(X)\in K[X]$. Then, $f_K(X)$ divides $g(X)$ in $K[X]$ since $g(x)=0$. Moreover, $f_K(X)=(X-x)h(X)$, with $h(X)\in L[X]\setminus L$. Then, $h$ divides $g'$ in $L[X]$, which is irreducible in $L[X]$, so that $h=g'$, giving $f_K=g$, whence $K=K_g$. Hence, $K_g\subset L$ is minimal and $g=f_{K_g}$. 
\end{proof}

 \begin{proposition}\label{4.221}  The following statements hold:
 \begin{enumerate}
\item $f_{\alpha}$ divides $f_{\Phi(f_{\alpha})}$ in $L[X]$ for each ${\alpha}\in \mathbb N_r$.
\item Let $\beta\in\mathbb N_t$. Then, $\Gamma(\beta)\subseteq I(E_{\beta})$ and (3) holds.
\item  $m_{\beta}(X)=(X-x)\prod [f_{\alpha}(X) \mid {\alpha}\in \Gamma(\delta),\ E_{\beta}\subseteq E_{\delta}]$.
\end{enumerate}
\end{proposition}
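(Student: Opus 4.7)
The plan is to prove (1) by a direct $k$-embedding argument, derive (2) as an immediate corollary, and deduce (3) by combining (1) with the intersection formula of Theorem~\ref{4.210}.

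For (1), fix any root $\rho$ of $f_\alpha$ in an algebraic closure $\bar L$ of $L$. Since $f_\alpha$ divides $f$ in $L[X]$, $\rho$ is also a root of $f$, hence there is a well-defined $k$-algebra homomorphism $\sigma \colon L = k[x] \to \bar L$ sending $x$ to $\rho$. The key step is to show that $\sigma$ fixes $L_\alpha$: for any $y = h(x) \in L_\alpha = E_{\Phi(f_\alpha)}$ with $h \in k[X]$, the description of $E_{\Phi(f_\alpha)}$ given by Theorem~\ref{4.210}, applied with the specific index $\alpha \in \Gamma(\Phi(f_\alpha))$, yields $h(X) \equiv h(x) \pmod{f_\alpha(X)}$ in $L[X]$, whence $h(\rho) = h(x)$ and $\sigma(y) = y$. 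Therefore $\sigma$ is an $L_\alpha$-algebra embedding of $L$ into $\bar L$ sending $x$ to $\rho$, so $\rho$ is an $L_\alpha$-conjugate of $x$ and in particular a root of $f_{L_\alpha}$. Since $\rho$ was arbitrary and $f_\alpha$ is monic and separable, we conclude that $f_\alpha$ divides $f_{L_\alpha}$ in $L[X]$.

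For (2), any $\alpha \in \Gamma(\beta)$ satisfies $L_\alpha = E_\beta$ by definition of $\Gamma$, so $f_{L_\alpha} = m_\beta$, and (1) yields $f_\alpha \mid m_\beta$, i.e.\ $\alpha \in I(E_\beta)$.

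For (3), Theorem~\ref{4.210} gives $m_\beta(X) = (X-x)\prod_{\alpha \in I(E_\beta)} f_\alpha(X)$, so the statement reduces to the equality $I(E_\beta) = \{\alpha : E_\beta \subseteq L_\alpha\}$. If $E_\beta \subseteq L_\alpha$, then $f_{L_\alpha}$ divides $m_\beta = f_{E_\beta}$ in $L_\alpha[X]$ (a larger ground field yields a smaller minimal polynomial), and combining with $f_\alpha \mid f_{L_\alpha}$ from (1) gives $\alpha \in I(E_\beta)$. For the converse, Theorem~\ref{4.210} realizes $E_\beta$ as the intersection $\bigcap_{\delta \in J(E_\beta)} E_\delta$ of the principal subfields $L_\gamma$ for $\gamma$ ranging over $I(E_\beta)$; so if $\alpha \in I(E_\beta)$, then $L_\alpha$ is one of these, which forces $E_\beta \subseteq L_\alpha$. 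Rewriting $\{\alpha : E_\beta \subseteq L_\alpha\}$ as $\{\alpha : \alpha \in \Gamma(\delta) \text{ for some } \delta \text{ with } E_\beta \subseteq E_\delta\}$ (via $L_\alpha = E_{\Phi(f_\alpha)}$) then delivers the asserted product formula.

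I expect the main obstacle to be the key step in (1), namely verifying that the $k$-embedding $\sigma$ fixes $L_\alpha$; once this is secured, (2) is essentially tautological and (3) is a bookkeeping argument based on Theorem~\ref{4.210}.
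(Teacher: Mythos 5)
Your proposal is correct, and for parts (2) and (3) it follows exactly the same route as the paper: (2) is the one-line consequence of (1) together with $f_{E_\beta}=m_\beta$ for $\alpha\in\Gamma(\beta)$, and (3) is obtained by translating the product formula of Theorem~\ref{4.210} for $m_\beta=f_{E_\beta}$ into the statement $I(E_\beta)=\{\alpha\mid E_\beta\subseteq L_\alpha\}$, using (1) for one inclusion and the intersection formula $E_\beta=\bigcap_{\alpha\in I(E_\beta)}L_\alpha$ for the other. The genuine difference is in part (1): the paper simply cites \cite[Lemma 45]{SVH} and gives no argument, whereas you supply a self-contained proof via the embedding $\sigma\colon L\to\bar L$, $x\mapsto\rho$, checking that $\sigma$ fixes $L_\alpha$ pointwise through the pullback description $L_\alpha=\{g(x)\mid g(X)\equiv g(x)\ (f_\alpha(X))\}$, and then invoking separability of $f_\alpha$ to pass from ``every root of $f_\alpha$ is a root of $f_{L_\alpha}$'' to divisibility in $L[X]$. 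This argument is sound (the only points worth making explicit are that $\rho$ is a root of $f$ because $f=(X-x)f_1\cdots f_r$ in $L[X]\subseteq\bar L[X]$, so $\sigma$ is well defined, and that divisibility of monic polynomials in $L[X]$ may be tested in $\bar L[X]$), and it buys independence from the external reference at the cost of a short Galois-theoretic detour; the paper's choice buys brevity at the cost of outsourcing the key divisibility fact.
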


\begin{proof} (1) is \cite [Lemma 45]{SVH}.  (We benefited from a preprint and correspondence with M. van Hoeij).

(2) Let $\alpha\in \Gamma[\beta]$, so that $E_{\beta}=\Phi(f_{\alpha})$. In particular,  $f_{\alpha}$ divides $f_{E_{\beta}}=m_{\beta}$. It follows that ${\alpha}\in I(E_{\beta})$.

(3) Let $\beta\in\mathbb N_t$ and let $E_{\delta}$ be such that $E_{\beta}\subseteq E_{\delta}$. Then, $m_{\delta}$ divides $m_{\beta}$. Let $\alpha\in \Gamma(\delta)$. By (2), we get that $f_{\alpha}$ divides $m_{\delta}$, yielding $f_{\alpha}$ divides $m_{\beta}$. Conversely, let $f_{\alpha}$ dividing $m_{\beta}$ and set $E_{\delta}:=\Phi(f_{\alpha})$.  This implies that $E_{\beta}\subseteq E_{\delta}$ by (Theorem~\ref{4.210}). 
\end{proof}

\begin{remark}\label{R} The inclusion (2) of the previous proposition may be strict (see (Example~\ref{3.171})).
\end{remark}

\begin{lemma}\label{4.23}  The following statements hold:
\begin{enumerate}
\item $E_{\beta}\neq L$ for each $\beta\in \mathbb N_t$.
\item If $K_{\alpha}\neq L$, then $K_{\alpha}=\Phi(f_{\alpha})$. In particular, $K_{\alpha}\subset L$ is minimal for each $\alpha$ such that $K_{\alpha}\neq L$ and then $f_{K_{\alpha}}(X)=(X-x)f_{\alpha}(X)$.
\item Let $K\in[k,L]$  be such that $K\subset L$ is minimal. There exists some $\beta\in \mathbb N_t$   such that $K=E_{\beta}$.
\end{enumerate}
\end{lemma}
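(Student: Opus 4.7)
The strategy is to deduce all three assertions from (Theorem~\ref{4.210}) combined with (Lemma~\ref{4.22}), treating the three parts in order.

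For (1), I plan to argue by contradiction. Suppose $E_{\beta}=L$ for some $\beta\in\mathbb N_t$. Pick any $\alpha\in\Gamma(\beta)$; such an $\alpha$ exists because by definition $E_{\beta}$ lies in the image of $\Phi$. The explicit characterization of $E_{\beta}$ in (Theorem~\ref{4.210}), applied to $g(X)=X$, would then force $X-x\equiv 0\pmod{f_{\alpha}(X)}$ in $L[X]$, i.e. $f_{\alpha}\mid X-x$. But the $f_{\alpha}$'s are, by construction, the monic irreducible factors of $f(X)/(X-x)$ in $L[X]$, hence are distinct from $X-x$ and of degree $\geq 1$, ruling out such a divisibility.

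For (2), the hypothesis $K_{\alpha}\neq L$ together with $g_{\alpha}(X)=(X-x)f_{\alpha}(X)$, where $f_{\alpha}$ is irreducible in $L_u[X]$, puts me exactly in the situation of (Lemma~\ref{4.22}), whence $K_{\alpha}\subset L$ is minimal with $f_{K_{\alpha}}(X)=(X-x)f_{\alpha}(X)$. Comparing this with the general expression $f_{K_{\alpha}}(X)=(X-x)\prod_{\alpha'\in I(K_{\alpha})}f_{\alpha'}(X)$ supplied by (Theorem~\ref{4.210}) and using that the $f_{\alpha'}$'s are pairwise distinct irreducibles of $L[X]$, I conclude $I(K_{\alpha})=\{\alpha\}$. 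Since $\Phi$ is a well-defined function, $J(K_{\alpha})$ then reduces to a singleton $\{\beta_{0}\}$ with $E_{\beta_{0}}=\Phi(f_{\alpha})$, and the intersection formula of (Theorem~\ref{4.210}) collapses to $K_{\alpha}=E_{\beta_{0}}=\Phi(f_{\alpha})$.

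For (3), let $K\in[k,L]$ be such that $K\subset L$ is minimal; in particular $K\neq L$. The intersection formula $K=\bigcap_{\beta\in J(K)}E_{\beta}$ of (Theorem~\ref{4.210}) forces $J(K)\neq\emptyset$, since otherwise the empty intersection would give $K=L$. Pick any $\beta\in J(K)$: then $K\subseteq E_{\beta}$, and part~(1) yields $E_{\beta}\subsetneq L$. The minimality of $K\subset L$ therefore forces the equality $K=E_{\beta}$, as desired.

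The point that demands most care is step (2): one must justify cleanly both $I(K_{\alpha})=\{\alpha\}$ and $|J(K_{\alpha})|=1$ in order to descend from the general intersection formula to the single principal subfield $\Phi(f_{\alpha})$. This rests on the two combinatorial facts that the factors $f_{\alpha'}$ are pairwise distinct in $L[X]$ (a consequence of separability) and that $\Phi$ is a function with codomain $\mathcal E=\{E_{1},\dots,E_{t}\}$, so that equivalence classes under $\Phi$ produce unique $\beta$'s.
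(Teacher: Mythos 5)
Your proposal is correct and follows essentially the same route as the paper: part (1) via $g(X)=X$ and the characterization of $E_{\beta}$ in (Theorem~\ref{4.210}), and part (2) via (Lemma~\ref{4.22}) combined with the factorization of $f_{K_{\alpha}}$ to get $I(K_{\alpha})=\{\alpha\}$ and hence $K_{\alpha}=\Phi(f_{\alpha})$. The only difference is that you write out the short argument for part (3), which the paper leaves implicit; your version of it (nonempty $J(K)$, $K\subseteq E_{\beta}\subsetneq L$, minimality forces equality) is exactly the intended one.
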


\begin{proof}  (1) Assume that $E_{\beta}=L$ for some $\beta$, so that $x\in E_{\beta}$. Let $\alpha\in\Gamma(\beta)$ and  set $g(X):=X$. Since $g(x)=x\in E_{\beta}$, the characterization of $E_{\beta}$ entails that $f_{\alpha}(X)$ divides $g (X)-g(x)=X-x$, a contradiction, since $f(X)$ is separable. Therefore, $E_{\beta}\neq L$. 

(2) Assume that $K_{\alpha}\neq L$. By (Lemma~\ref{4.22}),  $K_{\alpha}\subset L$ is  minimal  and $g_{\alpha}=f_{K_{\alpha}}$. In view of (Theorem \ref{4.210}), we get that $K_{\alpha}=\cap_{{\beta}\in J(K_{\alpha})}E_{\beta}$, where $f_{K_{\alpha}}(X)=(X-x)\prod_{{\delta}\in I(K_{\alpha})}f_{\delta}(X)=g_{\alpha}(X)=(X-x) f_{\alpha}(X)$, so that $I(K_{\alpha})=\{\alpha\}$.  Then, $\Phi(f_{\alpha})=K_{\alpha}=E_{\beta}$ such that $\Gamma(\beta)=\{\alpha\}$.
\end{proof}

As a by-product, using the pullbacks $E_j$, we get a characterization of minimal separable fields extensions. Our result is completely independent of  Galois theory, contrary to  Philippe's  methods  \cite{Ph}.   She proved that  a separable extension $k\subset k(x)$ is minimal if and only if the Galois group of the minimal polynomial of $x$ is primitive \cite[Proposition 2.2(3)]{Pic}.

\begin{proposition}\label{4.252} Let $k\subset L=k[x]$ be a finite separable  field extension of degree $n$. Then  the following statements	are equivalent:
\begin{enumerate}
\item $k\subset L$ is a minimal extension.
\item $t=1$.
\item For each $\alpha\in\mathbb N_r$ and each $h(X)\in k[X]\setminus k$ with degree $<n,\ h(X)\not\equiv h(x)\ (f_{\alpha}(X))$.
\item $K_g=L$ for each $g(X)\in L[X]$ dividing strictly $f(X)$ and such that $g(x)=0$.
\end{enumerate}
In particular, if $r=1$, then $k\subset L$ is  minimal.
\end{proposition}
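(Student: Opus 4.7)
The plan is to establish the equivalences (1)$\iff$(2), (1)$\iff$(4), (1)$\iff$(3) separately, and then to deduce the final assertion. The main inputs are (Theorem~\ref{4.210}), (Lemma~\ref{4.23}), and the identity $(*)\colon K=K_{f_K}$.

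For (1)$\iff$(2): if (1) holds, each $E_\beta\in[k,L]$ is $\neq L$ by (Lemma~\ref{4.23})(1), so $E_\beta=k$, giving $t=1$. Conversely, $t=1$ forces $E_1=\cap_\beta E_\beta=k$, and by (Theorem~\ref{4.210}) every $K\in[k,L[$ is an intersection of $E_\beta$'s, hence $K=k$. For (1)$\iff$(4): under (1), any $g\in L[X]$ strictly dividing $f$ with $g(x)=0$ has $K_g\in[k,L]$, and $K_g=k$ would put $g\in k[X]$ with degree $<n$ and $x$ as a root, contradicting the minimality of $f$; hence $K_g=L$. Conversely, for $K\in\,]k,L[$, the minimal polynomial $f_K$ strictly divides $f$ in $L[X]$ and vanishes at $x$, so (4) and $(*)$ give $K=K_{f_K}=L$, absurd.

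For (1)$\iff$(3), the key is to introduce, for each $\alpha\in\mathbb{N}_r$, the set $A_\alpha:=\{h(x)\in L\mid h\in k[X],\ h(X)\equiv h(x)\ (f_\alpha(X))\}$. The identity $(hg)(X)-(hg)(x)=h(X)(g(X)-g(x))+g(x)(h(X)-h(x))$ yields closure of $A_\alpha$ under multiplication, and closure under addition is immediate; since $A_\alpha$ is a finite-dimensional $k$-subalgebra of the field $L$, it is a $k$-subfield. Moreover $x\notin A_\alpha$: $X\equiv x\ (f_\alpha)$ would force $f_\alpha\mid X-x$ in $L[X]$, impossible as $f_\alpha$ and $X-x$ are distinct irreducible factors of the separable polynomial $f$. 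Hence $A_\alpha\in[k,L[$. Assuming (1), if (3) fails for some $\alpha$ and $h\in k[X]\setminus k$ of degree $<n$, then $h(x)\in A_\alpha=k$ by minimality; but $\{1,x,\ldots,x^{n-1}\}$ is a $k$-basis of $L$, so $h\in k[X]\setminus k$ with $\deg h<n$ forces $h(x)\notin k$, a contradiction. Conversely, if (1) fails, take $K\in\,]k,L[$ and $y=h(x)\in K\setminus k$ with $h\in k[X]\setminus k$, $\deg h<n$; (Theorem~\ref{4.210}) gives $h(X)\equiv h(x)\ (f_K)$, and since $\deg f_K=[L:K]\geq 2$, the factorization $f_K=(X-x)\prod_{\alpha\in I(K)}f_\alpha$ has $I(K)\neq\emptyset$, so any $\alpha\in I(K)$ contradicts (3).

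The final assertion is immediate: $r=1$ forces $t=|\Phi(\mathcal{F})|\leq 1$, and $t\geq 1$ gives $t=1$, so (1) follows from (2). The delicate step is (1)$\iff$(3): producing, from a congruence modulo $f_\alpha$, a proper $k$-subfield of $L$ containing $h(x)$. The subalgebra $A_\alpha$ does this, and the two essential verifications are its closure under multiplication and the fact that $x\notin A_\alpha$; the other implications are essentially bookkeeping once the description of $[k,L]$ via principal subfields is in hand.
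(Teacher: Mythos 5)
Your proof is correct, and for the equivalences (1)$\Leftrightarrow$(2) and (1)$\Leftrightarrow$(4) it coincides with the paper's argument (the paper runs the cycle (1)$\Rightarrow$(2)$\Rightarrow$(3)$\Rightarrow$(1) and treats (4) separately, but the content is the same). The one place you genuinely diverge is (1)$\Leftrightarrow$(3): the paper deduces (3) from (2) by setting $K:=k[h(x)]$, observing $\deg f_K<n$ forces $K\neq k$, and then invoking Theorem~\ref{4.210} to write $K$ as an intersection of the $E_\beta$'s, all equal to $k$ when $t=1$; you instead rebuild the principal subfield $L_\alpha$ by hand as $A_\alpha=\{h(x)\mid h\in k[X],\ h(X)\equiv h(x)\ (f_\alpha)\}$, check directly that it is a proper $k$-subalgebra of $L$ (multiplicative closure via the identity $(hg)(X)-(hg)(x)=h(X)(g(X)-g(x))+g(x)(h(X)-h(x))$, and $x\notin A_\alpha$), and conclude $A_\alpha=k$ under minimality while $h(x)\notin k$ by the basis argument. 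This buys you a forward implication (1)$\Rightarrow$(3) that does not lean on the van Hoeij--Kl\"uners--Novocin intersection theorem at all, at the cost of redoing part of Lemma~\ref{4.23}(1); the paper's route is shorter because it delegates everything to Theorem~\ref{4.210}. One small point to tighten: your claim $x\notin A_\alpha$ as written only tests the representative $h(X)=X$; you should add that membership in $A_\alpha$ is independent of the representative, since two polynomials of $k[X]$ taking the same value at $x$ differ by a multiple of $f$, hence of $f_\alpha$ (the same implicit normalization occurs in the paper's proof of Lemma~\ref{4.23}(1)). With that remark inserted, the argument is complete.
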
 

\begin{proof} (1) $\Rightarrow$ (2)  From (Lemma \ref{4.23}), we deduce that $E_{\beta}\neq L$ for each $\beta$, so that $E_{\beta}=k$ and then $t=1$.

(2) $\Rightarrow$ (3) Since $t=1$, we get that $E_1=k$ is the only $E_{\beta}$. Assume that there is some $h(X)\in k[X]\setminus k$ with degree $<n$ such that $h(X)\equiv h(x)\ (f_{\alpha}(X))$ and set $K:=k[h(x)]$. Then, $h(X)\equiv h(x)\ (f_K(X))$, so that $\deg(f_K)<n$, whence $K\neq k$. By (Theorem \ref{4.210})  $K$ is an intersection of some $E_{\beta}$'s, a contradiction. Then, for each $h(X)\in k[X]\setminus k$ with degree $<n,\ h(X)\not\equiv h(x)\  (f_{\alpha}(X))$, for each $\alpha\in \mathbb N_r$. 

(3) $\Rightarrow$ (1) Assume that $k\subset L$ is not minimal and let $K\in [k,L]\setminus\{k,L\}$. Let $h(X)\in k[X]\setminus k$ with degree $<n$ such that $K=k[h(x)]$. In particular, $\deg(h)>1$. By (Theorem \ref{4.210}),  some $f_{\alpha}(X)$ divides both $f_K(X)$ and $h(X)-h(x)$, a contradiction. Then, $k\subset L$ is  minimal.

(1) $\Leftrightarrow$ (4) $k\subset L$ is not  minimal if and only if there exists some $K\in [k,L]$ such that $k\subset K\subset L$. In this case,  $f_K(X)$ divides strictly $f(X)$ and $f_K(x)=0$ with $K=K_{f_K}\neq L$. Conversely, assume that there is some $g(X)\in L[X]$ dividing strictly $f(X)$ and such that $g(x) =0$ with $K_g\neq L$. Then, $k\subset K_g\subset L$ follows and $k\subset L$ is not  minimal.

If, in particular, $r=1$, then, $t=1$ and  $k\subset L$ is  minimal.
\end{proof}
\begin{example} \label{4.2521} If $k\subset L$ is  Galois of degree 3, then $f(X)=(X-x)(X-x_1)(X-x_2)$, with $x,x_1,x_2\in L\setminus k$ all distincts. Set $f_{\alpha}(X)=X-x_{\alpha}$, for $\alpha=1,2$ and $g_i(X)=(X-x)f_{\alpha}(X)$. Then, $K_{\alpha}=k[x+x_{\alpha},xx_{\alpha}]=k[x_{\beta}]=L$, where $\alpha\neq \beta\in\{1,2\}$. So, we recover the fact that $k\subset L$ is minimal, giving $t=1$ and $E_1=k$

\end{example}

For each $\beta\in\mathbb N_t$, we set $\mathcal{F}_{\beta}:=\{f_{\alpha}\in \mathcal{F}\mid f_{\alpha}$ divides $m_{\beta}\}$, so that $m_{\beta}(X)=(X-x)\prod_{f_{\alpha}\in\mathcal{F}_{\beta}}f_{\alpha}(X)$.

\begin{proposition}\label{4.253} The following conditions are equivalent:
\begin{enumerate}

\item $E_{\beta}\subset L$ is minimal for each $\beta\in\mathbb N_t$.  
\item For each $\beta\in\mathbb N_t,\ \mathcal{F}_{\beta}=\{f_{\alpha}\in \mathcal{F}\mid \Phi(f_{\alpha})=E_{\beta}\}$.  
\end{enumerate}
If these conditions hold, $\ell[k,S]=2$ if and only if $|[k,L]|=t+2$.
\end{proposition}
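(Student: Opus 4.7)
\medskip

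Before outlining a proof, I note that the final assertion reads $\ell[k,S]=2$, but no object $S$ is defined in Section 5; within the scope of this section the only ambient fields are $k$ and $L$. The natural reading is $\ell[k,L]=2$, matching the transitional discussion before Section 5 where $k:=R/M$ and $L:=S/M$ via the poset isomorphism $[R,S]\to[R/M,S/M]$. I will carry out the proof under that interpretation.

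My plan for (1)$\Leftrightarrow$(2) is to reduce both conditions to the single combinatorial condition ``no $E_\delta$ strictly contains $E_\beta$''. For (2), Proposition~\ref{4.221}(3) says that $f_\alpha$ divides $m_\beta$ precisely when there exists $\delta$ with $\alpha\in\Gamma(\delta)$ and $E_\beta\subseteq E_\delta$. Since for each $\delta\in\mathbb{N}_t$ we have $\Gamma(\delta)\neq\emptyset$ and every $f_\alpha\in\Gamma(\delta)$ satisfies $\Phi(f_\alpha)=E_\delta$, the inclusion $\mathcal{F}_\beta\supseteq\{f_\alpha:\Phi(f_\alpha)=E_\beta\}$ is automatic and (2) is equivalent to saying that the only $\delta$ with $E_\beta\subseteq E_\delta$ is $\delta=\beta$. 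For (1), Theorem~\ref{4.210} writes any $K\in[k,L[$ as $K=\cap_{\delta\in J(K)}E_\delta$, so $K\supseteq E_\beta$ forces $E_\delta\supseteq E_\beta$ for every $\delta\in J(K)$. Hence if no $E_\delta$ properly contains $E_\beta$, no such $K$ properly contains $E_\beta$ either, giving maximality of $E_\beta$ in $[k,L[$, i.e.\ minimality of $E_\beta\subset L$. Conversely, by Lemma~\ref{4.23}(1) every $E_\delta$ lies in $[k,L[$, so a proper containment $E_\beta\subsetneq E_\delta$ immediately violates (1). This proves (1)$\Leftrightarrow$(2).

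For the final claim, assume (1)/(2). I would first show that if $\ell[k,L]=2$ then no $E_\beta$ equals $k$: otherwise (1) would make $k\subset L$ minimal, contradicting $\ell=2$. Next, given any $K\in[k,L[\setminus\{k\}$, Theorem~\ref{4.210} yields some $\beta$ with $K\subseteq E_\beta$; together with $E_\beta\subsetneq L$ (Lemma~\ref{4.23}(1)) and $k\subsetneq K$, the chain $k\subsetneq K\subseteq E_\beta\subsetneq L$ has length $\leq 2$, forcing $K=E_\beta$. Therefore $[k,L]=\{k,L,E_1,\ldots,E_t\}$ with all $t+2$ elements distinct (the $E_\beta$'s are distinct by the definition of $t$, and we just showed none coincides with $k$ or $L$), giving $|[k,L]|=t+2$.

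Conversely, assume $|[k,L]|=t+2$. The set $\{k,L,E_1,\ldots,E_t\}$ is a subset of $[k,L]$ containing at most $t+2$ elements, so the hypothesis forces equality of cardinalities, whence all $t+2$ listed elements are distinct and $[k,L]$ contains nothing else. In particular $k\neq E_\beta\neq L$ for every $\beta$, and since there is no other element of $[k,L]$ between $k$ and $E_\beta$, the extension $k\subset E_\beta$ is minimal; combining with $E_\beta\subset L$ minimal (from (1)) shows that every maximal chain has length $2$, so $\ell[k,L]=2$. The main conceptual step is the first one, unpacking (2) via Proposition~\ref{4.221}(3); the counting argument is then a straightforward use of Theorem~\ref{4.210} and Lemma~\ref{4.23}.
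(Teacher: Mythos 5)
Your reading of the typo $\ell[k,S]$ as $\ell[k,L]$ is the intended one, and your proof of (1)$\Leftrightarrow$(2) is correct: it routes both conditions through the single statement ``no $E_\delta$ strictly contains $E_\beta$,'' which is a slightly tidier organization of exactly the ingredients the paper uses (Theorem~\ref{4.210}, Proposition~\ref{4.221}(3) and Lemma~\ref{4.23}(1)). The forward half of the cardinality claim is also fine: from $\ell[k,L]=2$ you correctly rule out $E_\beta=k$ and then squeeze every $K\in\,]k,L[$ between $k$ and some $E_\beta$.

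The one genuine flaw is in the converse. From ``$\{k,L,E_1,\ldots,E_t\}$ is a subset of $[k,L]$ containing at most $t+2$ elements'' and ``$|[k,L]|=t+2$'' you infer ``equality of cardinalities, whence all $t+2$ listed elements are distinct.'' That inference is invalid: a subset of a $(t+2)$-element set may perfectly well have fewer than $t+2$ elements, so nothing in this step prevents some $E_\beta$ from coinciding with $k$ --- and distinctness is precisely what you need to establish before the counting can do any work. The repair is short but requires an argument you did not supply: if $E_\beta=k$ for some $\beta$, then by (1) the extension $k\subset L$ is minimal, so $[k,L]=\{k,L\}$; since every $E_\delta$ lies in $[k,L[$ by Lemma~\ref{4.23}(1), all $E_\delta$ equal $k$ and $t=1$, whence $|[k,L]|=2\neq 3=t+2$, contradicting the hypothesis. (The paper sidesteps this by splitting off the case $k=E_\beta$ at the outset and noting that both sides of the biconditional fail there.) With that line inserted, the $t+2$ listed elements are distinct members of $[k,L]$, equality of sets follows from the cardinality hypothesis, and the rest of your argument --- pairwise incomparability of the $E_\beta$'s from their maximality in $[k,L[$, hence $\ell[k,L]=2$ --- goes through.
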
 

\begin{proof} (1) $\Rightarrow $(2) Let $\beta\in\mathbb N_t$. Then, $m_{\beta}(X)=(X-x)\prod_{f_{\alpha}\in\mathcal{F}_{\beta}}f_{\alpha}(X)$. Let $f_{\alpha}\in \mathcal{F}_{\beta}$ and set $E_{\delta}:=\Phi(f_{\alpha})$.  By Theorem \ref{4.210},  $E_{\beta}\subseteq E_{\delta}\subset L$ which implies $E_{\beta}=E_{\delta}$ since $E_{\beta}\subset L$ is minimal, so that  $\Phi(f_{\alpha})=E_{\beta}$. Conversely, let $\alpha\in\mathbb N_r$ be such that $\Phi(f_{\alpha})=E_{\beta}$, whence $\alpha\in\Gamma(\beta)\subseteq I(E_{\beta})$ by (Proposition~\ref{4.221}). Therefore $f_{\alpha}$ divides $m_{\beta}$, giving $f_{\alpha}\in \mathcal{F}_{\beta}$. 

(2) $\Rightarrow $(1)  Assume that $E_{\beta}\subset L$ is not minimal. Then, there is some $K\in]E_{\beta},L[$. By (Theorem \ref{4.210}), $K$ is an intersection of some $E_{\delta}$, for some $\delta\in\mathbb N_t$. In particular, $E_{\beta}\subset K\subseteq E_{\delta}$, giving $m_{\delta}$  divides strictly $m_{\beta}$. Let $\alpha\in I(E_{\delta})$, so that $f_{\alpha}$ divides $m_{\delta}\ (*)$, and also $m_{\beta}\ (**)$. But $(*)$ implies that $f_{\alpha}\in\mathcal{F}_{\delta}$, giving $\Phi(f_{\alpha})=E_{\delta}$ and $(**)$ implies that $f_{\alpha}\in\mathcal{F}_{\beta}$, giving $\Phi(f_{\alpha})=E_{\beta}$. To conclude, we get $E_{\delta}=E_{\beta}$, a contradiction. Then, $E_{\beta}\subset L$ is  minimal.

Assume that these conditions hold. If $k=E_{\beta}$ for some $\beta$, we get that $k\subset L$ is minimal, $t=1,\ \ell[k,L]=1$ and $|[k,L]|=2$. 

Now, assume that  $k\neq E_{\beta}$ for any $\beta$. 

If $\ell[k,L]=2$, let $K\in]k,L[$, so that $k\subset K$ and $K\subset L$ are both minimal. But, $K=\cap_{\beta\in J(K)}E_{\beta}\subset L$. It follows that $K=E_{\beta}$ for one $\beta\in J(K)$, since $E_{\beta}\neq L$, so that $[k,L]=\{k,E_1,\ldots,E_t,L\}$ giving $|[k,L]|=t+2$. Conversely, if $|[k,L]|=t+2$, we get that $[k,L]=\{k,E_1,\ldots,E_t,L\}$ since $E_{\beta}\neq E_{\delta}$ for $\beta,\delta\in\{1,\ldots,t\},\ \delta\neq\beta$. But $E_\beta\subset L$ being minimal for each $\beta\in\{1,\ldots,t\}$, the $E_\beta$'s are incomparable and we get that $\ell[k,L]=2$.
\end{proof}

\begin{theorem}\label{3.17} The  two following statements are equivalent:

\begin{enumerate}

\item   $\ell[k,L]=2$ 

\item  $t>1$ and $E_{\alpha}\cap E_{\beta}=k$ for all $\alpha,\beta\in\mathbb N_t,\ {\alpha}\neq \beta$. 
\end{enumerate}

If the above  statements hold, then

\begin{enumerate}

\item[(3)] $|[k,L]|=t+1$ if $k$ is one of the $E_{\alpha}$  

\item[(4)] $|[k,L]|=t+2$ if $k\neq E_{\alpha}$ for all $\alpha\in\mathbb N_t$. 
\end{enumerate}

\end{theorem}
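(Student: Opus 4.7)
The plan is to leverage the description of intermediate subfields as intersections of the principal subfields $E_{\beta}$ (Theorem~\ref{4.210}) together with the characterization of minimality $k\subset L$ via $t=1$ (Proposition~\ref{4.252}), and then to rule out chains of length $3$ using the stated intersection condition.

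For (1) $\Rightarrow$ (2), I would first observe that $\ell[k,L]=2$ forces $k\subset L$ to be non-minimal, so by Proposition~\ref{4.252} we have $t>1$. Next, fix distinct $\alpha,\beta\in\mathbb{N}_t$. Since each $E_{\gamma}\neq L$ by Lemma~\ref{4.23}(1), both $E_{\alpha}$ and $E_{\beta}$ lie in $[k,L[$, and so does $E_{\alpha}\cap E_{\beta}$. Assume for contradiction that $E_{\alpha}\cap E_{\beta}\neq k$; then $E_{\alpha}\cap E_{\beta}\in\,]k,L[$, giving the chain $k\subsetneq E_{\alpha}\cap E_{\beta}\subseteq E_{\alpha}\subsetneq L$. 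Since $E_{\alpha}\neq E_{\beta}$ (the $E_{\gamma}$'s are distinct by construction of $\Phi$) and $E_{\alpha}\cap E_{\beta}\subseteq E_{\beta}\subsetneq L$ as well, at least one of the two inclusions $E_{\alpha}\cap E_{\beta}\subseteq E_{\alpha}$ or $E_{\alpha}\cap E_{\beta}\subseteq E_{\beta}$ must be strict (otherwise $E_{\alpha}=E_{\beta}$), producing a maximal chain of length $\geq 3$ between $k$ and $L$. This contradicts $\ell[k,L]=2$.

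For (2) $\Rightarrow$ (1), Proposition~\ref{4.252} gives immediately that $k\subset L$ is not minimal, so $\ell[k,L]\geq 2$. The key step is to show there is no chain of length $3$. Let $K\in\,]k,L[$. By Theorem~\ref{4.210}, $K=\cap_{\beta\in J(K)}E_{\beta}$. If $|J(K)|\geq 2$, choose distinct $\alpha,\beta\in J(K)$; then $K\subseteq E_{\alpha}\cap E_{\beta}=k$, contradicting $K\neq k$. Hence $|J(K)|=1$ and $K$ coincides with some $E_{\beta}$. Therefore $]k,L[\,\subseteq\{E_1,\dots,E_t\}$. Now any chain $k\subsetneq K_1\subsetneq K_2\subsetneq L$ would force $K_1=E_{\alpha}$, $K_2=E_{\beta}$ with $E_{\alpha}\subsetneq E_{\beta}$; distinctness of $E_\alpha,E_\beta$ then gives $E_{\alpha}\cap E_{\beta}=E_{\alpha}\supsetneq k$, contradicting the hypothesis. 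So every maximal chain has length exactly $2$ and $\ell[k,L]=2$.

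For the count, from the above argument $]k,L[\,=\{E_{\beta}\mid E_{\beta}\neq k\}$: conversely, any $E_{\beta}\neq k$ sits in $]k,L[$ using Lemma~\ref{4.23}(1). Since the $E_{\beta}$'s are pairwise distinct, this set has cardinality $t-1$ if some $E_{\alpha}$ equals $k$ and cardinality $t$ otherwise. Adding the two endpoints $k$ and $L$ yields $|[k,L]|=t+1$ in case (3) and $|[k,L]|=t+2$ in case (4).

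The main obstacle I anticipate is the direction (1) $\Rightarrow$ (2): one must carefully handle the case where one of the $E_{\gamma}$'s coincides with $k$ (so that it is not itself in $]k,L[$), and verify that even then the intersection $E_{\alpha}\cap E_{\beta}=k$ is forced for all genuinely distinct pairs. The rest reduces to bookkeeping with Theorem~\ref{4.210} and Lemma~\ref{4.23}.
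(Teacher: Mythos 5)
Your proof is correct and follows essentially the same route as the paper: both directions rest on Theorem~\ref{4.210} (every proper intermediate field is an intersection of the $E_\beta$'s), the observation that a nontrivial intersection $E_\alpha\cap E_\beta\neq k$ would create a chain of length $3$, and conversely that condition (2) forces $]k,L[$ to consist of pairwise incomparable $E_\beta$'s. Your explicit bookkeeping for the cardinalities in (3) and (4) is also what the paper's argument implicitly yields.
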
 

\begin{proof}  If (1) holds, then, $t>1$. Deny. Then $t=1$. In this case, $k=E_1$ by (Theorem~\ref{4.210}) and $]k,L[=\emptyset$, so that $k\subset L$ is minimal, a contradiction. Let $\alpha,\beta\in\mathbb N_t,\ \alpha\neq\beta$. It follows that  $k\subseteq E_{\alpha}\cap E_{\beta}\subseteq E_{\alpha},E_{\beta}\subset L$, with either $E_{\alpha}\cap E_{\beta}\subset E_{\alpha}$ or $E_{\alpha}\cap E_{\beta}\subset E_{\beta}$. In any case, $E_{\alpha}\cap E_{\beta}=k$ for all $\alpha,\beta\in\mathbb N_t,\ \alpha\neq \beta$. Hence, (2) is proved.

 Assume that (2) is valid. 
 Since any $K\in[k,L[$ is equal  to an intersection of some $E_{\alpha}$, we get that $[k,L[=\{k,E_{\alpha},\ \alpha\in\mathbb N_t\}$. We claim that two $E_{\alpha},E_{\beta}\neq k$ are incomparable. If not, $E_{\alpha}\subset E_{\beta}$ for some $\alpha,\beta\in\mathbb N_t,\ \alpha\neq \beta$ and then $k=E_{\alpha}=E_{\alpha}\cap E_{\beta}$, a contradiction. Therefore, (1) holds.
\end{proof}

\begin{example} \label{3.171} Set $k:=\mathbb{Q}$ and $L:=k[x]\subset \mathbb{R}$, where $x:=\root 4\of{2}$ is the positive real zero of the polynomial $f(X):=X^4-2$, the minimal polynomial of $x$ over $k$. Then, $f(X)=(X-x)(X+x)(X^2+x^2)$, a product of irreducible polynomials in $L[X]$. Set $f_1(X)=X+x$ and $f_2(X)=X^2+x^2$. Then, $g_1(X)=X^2-x^2$ and $g_2(X)=X^3-xX^2+x^2X-x^3$, giving $K_1=L_1=k[x^2]=E_1$ and $K_2=L$. In order to determine all elements of $[k,L]$, it remains to calculate $E_2=L_2=\{g(x)\in L\mid g(X)\in k[X],\ g(X)\equiv g(x)\ (f_2(X))\}$. In fact, it is enough to consider $g(X)\in k[X]$ of degree $< 4$. Set $g(X)=aX^3+bX^2+cX+d,\ a,b,c,d\in k$. Then, $g(X)-g(x)=(X-x)[a(X^2+xX+x^2)+b(X+x)+c]\ (*)$. It follows that $f_2(X)$ divides $g(X)-g(x)\Leftrightarrow X^2+x^2$ divides $a(X^2+xX+x^2)+b(X+x)+c\Leftrightarrow a(-x^2+ix^2+x^2)+b(ix+x)+c=0\Leftrightarrow a=b=c=0$. So, $g(x)\in E_2\Leftrightarrow g(x)=d$, for any $d\in k$, giving $E_2=k$. Then, $[k,L]=\{k=E_2,K_1=E_1,L=K_2\},\ |[k,L]|=3$ and $\ell[k,L]=2$. In particular, $m_1=g_1,\ m_2=f$  and $\Gamma(2)=\{2\}\subset I(E_2)=\{1,2\}$ (see notation before Theorem ~\ref{4.210} and Remark ~\ref{R}). 

\end{example} 

The case of a Galois extension is   a lot simpler. 

\begin{proposition}\label{3.18} Suppose that $k\subset L$ is a finite Galois   extension. 
\begin{enumerate}
\item If $[L:k]$ is a product of two prime integers, then $\ell[k,L]=2$.
\item Assume that $\ell[k,L]=2$ and $[L:k]=n$. Then $|[k,L]|\leq n+1$. 
\item Let $k\subset L$ be an Abelian field extension. Then $\ell[k,L]=2$ if and only if $[L:k]$ is a product of two prime integers $p$ and $q$. In this case, $|[k,L]|=3$ if $p=q$ and $|[k,L]|=4$ if $p\neq q$.
\end{enumerate}
\end{proposition}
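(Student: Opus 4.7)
The proof rests entirely on the Galois correspondence: since $k\subset L$ is finite Galois, the poset $[k,L]$ is anti-isomorphic to the lattice $\mathcal{L}(G)$ of subgroups of $G:=\mathrm{Gal}(L/k)$, so that $\ell[k,L]$ equals the length of $\mathcal{L}(G)$ and $|[k,L]|$ the number of subgroups of $G$. I would translate each of the three assertions into pure group theory and argue by elementary Cauchy/Lagrange counting.

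For (1), assume $|G|=pq$ with $p,q$ prime. Cauchy's theorem supplies a proper non-trivial subgroup. By Lagrange every such subgroup has prime order, hence is atomic; and it is maximal, because any strictly larger subgroup has order a multiple of $p$ (or $q$) dividing $pq$, so must equal $G$. Thus every maximal chain in $\mathcal{L}(G)$ has length exactly $2$.

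For (2), assume $\ell[k,L]=2$ and $|G|=n$. Every proper non-trivial subgroup is then both atomic (of prime order) and maximal. For $g\in G\setminus\{e\}$ the cyclic subgroup $\langle g\rangle$ is either $G$ itself or proper of prime order; in particular, if $G$ is not cyclic then every non-identity element has prime order, and distinct subgroups of prime order meet only in $\{e\}$ (a common non-identity element would generate each of them). Writing $N_p$ for the number of subgroups of $G$ of order $p$, the resulting partition of $G\setminus\{e\}$ gives
\[\sum_p N_p(p-1)=n-1,\qquad\text{hence}\qquad \sum_p N_p\leq n-1,\]
and therefore $|[k,L]|=2+\sum_p N_p\leq n+1$. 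The cyclic subcase is immediate: the atomic-and-maximal condition on proper divisors of $n$ forces $n$ to be a product of two primes, giving $|[k,L]|\leq 4\leq n+1$.

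For (3), assume $G$ is abelian. If $G$ is cyclic with $\ell=2$, the cyclic analysis of (2) gives $n=pq$ (primes, possibly equal); conversely (1) applies. If $G$ is non-cyclic with $\ell=2$, every non-identity element of $G$ has prime order, and in the abelian setting two non-identity elements of distinct prime orders $p\neq q$ would commute to produce an element of composite order $pq$, impossible; hence $G$ has prime exponent $p$, is an $\mathbb{F}_p$-vector space, and the length condition forces $\dim G=2$, so $|G|=p^2$. In every case $[L:k]$ is a product of two primes. For the cardinalities in the cyclic subcases, the subgroup lattice is read off the divisor lattice of $n$: $\mathbb{Z}/p^2$ has a unique subgroup of order $p$ (so $|[k,L]|=3$) while $\mathbb{Z}/pq$ with $p\neq q$ has one subgroup of each order $p,q$ (so $|[k,L]|=4$).

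The main obstacle I expect is the non-cyclic case in (2): the elementary but essential observation is that two distinct subgroups of prime order in $G$ intersect only in $\{e\}$, which converts the lattice hypothesis into an exact partition count of $G\setminus\{e\}$ and thereby into the bound $n+1$. Part (3) then amounts to refining this partition under the commutativity hypothesis to pin down the structure of $G$.
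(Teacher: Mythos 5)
Your treatment of (1) and (2) is correct, and it takes a genuinely different route from the paper. For (1) the paper argues field-theoretically: a maximal chain $k=K_0\subset\cdots\subset K_m=L$ satisfies $pq=\prod_{i}[K_{i+1}:K_i]$, so $m\le 2$, and non-minimality of $k\subset L$ (a minimal Galois extension has prime degree) forces $m=2$; your Cauchy/Lagrange argument is the mirror image of this under the Galois correspondence. For (2) the paper does not argue inside the group at all: it notes that $f$ splits into $n$ linear factors over $L$, so there are at most $n-1$ distinct principal subfields, and then quotes Theorem~\ref{3.17} to get $|[k,L]|=t+2\le n+1$. Your partition of $G\setminus\{e\}$ by its prime-order subgroups (using that two distinct atoms of prime order meet trivially) is elementary and self-contained, and it explains where the bound comes from; the paper's route buys uniformity with the principal-subfield framework it needs anyway for general separable extensions.

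In (3) there is a gap, and it is in fact a gap in the statement itself which your own analysis exposes. You correctly show that a non-cyclic abelian $G$ with $\ell[k,L]=2$ must be elementary abelian of order $p^2$, but you then compute $|[k,L]|$ only ``in the cyclic subcases.'' For $G\cong(\mathbb{Z}/p\mathbb{Z})^2$ the number of subgroups is $p+3$, not $3$, so the clause ``$|[k,L]|=3$ if $p=q$'' fails there; the paper's own Example~\ref{3.199}(2) ($k=\mathbb{Q}$, $L=\mathbb{Q}(\sqrt2+\sqrt3)$, abelian of degree $2\cdot 2$, $|[k,L]|=5$) is a counterexample to the proposition as stated. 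The paper's proof commits the corresponding error explicitly, asserting $\mathrm{Gal}(L/k)\cong\mathbb{Z}/p^2\mathbb{Z}$ or $\mathbb{Z}/pq\mathbb{Z}$ and overlooking $(\mathbb{Z}/p\mathbb{Z})^2$. So your proof of the equivalence ``$\ell[k,L]=2$ iff $[L:k]=pq$'' is complete and correct, but the cardinality clause cannot be proved as stated: either the hypothesis must be strengthened to ``cyclic,'' or the conclusion must be amended to allow $|[k,L]|=p+3$ when $G$ is non-cyclic of order $p^2$. As written, your silence on the non-cyclic subcase is exactly where the statement breaks down.
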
 

\begin{proof} (1) Assume that $[L:k]=pq$, with $p$ and $q$ two prime integers. Then, $k\subset L$ is not minimal \cite[Proposition 2.2 (2)]{Pic}. Let $k=K_0\subset K_1\subset\ldots\subset K_i\subset\ldots\subset K_n=L$ be a maximal chain of intermediate fields.  Then, $[L:k]=\prod_{i=0}^{n-1}[K_{i+1}:K_i]$ implies $n=2=\ell[k,L]$. 

(2) Assume that $\ell[k,L]=2$ and $[L:k]=n$. Then, $f(X)=(X-\alpha)\prod_{i=1}^{n-1}(X-\alpha_i)\in L(X)$. The number of the different $L_i$'s is at most $n-1$, so that $|[k,L]|\leq n+1$ by (Theorem~\ref{3.17}).

(3) If $k\subset L$ is Abelian, it is Galois. 
Assume  that $\ell[k,L]=2$ and let $K\in]k,L[$. Then, $K\subset L$ is minimal and Galois, so that $[L:K]=p$, a prime integer. Moreover, $k\subset K$ is also minimal Galois (see the Fundamental Theorem of Galois Theory \cite[Theorem 5.1, p. 51]{M}). It follows that $[K:k]=q$, a prime integer, so that $[L:k]=pq$. The converse is (1).

Now, if these conditions hold, $|[k,L]|$ is the number of subgroups of the Abelian group Gal$(L/k)$, equal to 3 if $p=q$ and 4 if $p\neq q$ (see \cite[Th\' eor\`eme 3 and Corollaire 2, AVII, p.22]{Bki A}) because Gal$(L/k)\cong\mathbb{Z}/p^2\mathbb{Z}$ or $\mathbb{Z}/pq\mathbb{Z}$.
\end{proof} 

We end by two  examples giving explicitly the subfields $L_{\alpha}$ and using results of \cite[Examples 5.2, p.52 and 5.3, p.53]{M}. We  give only an outline of the proofs. The method is the same as in (Example~\ref{3.171}).

\begin{example} \label{3.199} (1) Set $k:=\mathbb{Q},\ y:=\root 3\of{2}$ and $L:=k[x]$, where $x:=(1-j)y$ with $j=(-1+i\sqrt 3)/2$, a third root of unity. Then, $k\subset L$ is a Galois extension of degree 6, $\ell[k,L]=2$   
 and  $f(X)=X^6+108=X^6-x^6 = (X-x) f_1(X)f_2(X)f_3(X)f_4(x)f_5(X) $
 where $f_1(X)=X+x,\ f_2(X)=X-jx,\ f_3(X)=X+j^2x,\ f_4(X)=X+jx,\ f_5(X)=X-j^2x$.  We get:

 $g_1(X)=(X-x)(X+x)$ and $K_1=L_1=E_1=k[j^2y]$.

$g_2(X)=(X-x)(X-jx)$ and $K_2=L$ and $L_2=E_2=k[j]$.

$g_3(X)=(X-x)(X+j^2x)$ and $K_3=k[y]\neq L$, so that $K_3=L_3=E_3$.

$g_4(X)=(X-x)(X+jx)$, $K_4=k[jy]\neq L$, whence $K_4=L_4=E_4$.

$g_5(X)=(X-x)(X-j^2x)$, $K_5=k[jy,y]=L$, $L_5=k[j]=L_2=E_2$.
Therefore  $[k,L]=\{k,E_1,E_2,E_3,E_4,L]$ and $|[k,L]|=6$. In particular, $m_1=g_1,\ m_2=f_2g_5=f_5g_2,\ m_3=g_3,\ m_4=g_4 $. 

(2) Set $k:=\mathbb{Q}$ and $L:=k[x]$, where $x:=\sqrt 3+\sqrt 2$. Then, $k\subset L$ is a Galois extension of degree 4, $\ell[k,L]=2$ and  $f(X)=X^4-10X^2+1=(X-x)f_1(X)f_2(X)f_3(X)$,  where $f_1(X)=X+x,\ f_2(X)=X-x^{-1},\ f_3(X)=X+x^{-1}$.  

Set   $k_i:=k[\sqrt i],\ i=2,3,6$. Then, $g_1(X)=(X-x)(X+x)$, 
$g_2(X)=(X-x)(X-x^{-1})$ and 
$g_3(X)=(X-x)(X+x^{-1})$ giving 
 $L_1=K_1=E_1=k_6,\ L_2=K_2=E_2=k_3,\ L_3=K_3=E_3=k_2$ by 
 (Lemma~\ref{4.23}). 

To conclude, we have $[k,L]=\{k,E_1,E_2,E_3,L\},\ |[k,L]|=5$ and  the following diagram:
$\begin{matrix}
   {}  &        {}      & L             &       {}       & {}     \\
   {}  & \nearrow & \uparrow  & \nwarrow & {}     \\
E_1 &       {}       & E_2         &      {}        & E_3 \\
  {}   & \nwarrow & \uparrow & \nearrow & {}     \\
  {}   &      {}        & k             & {}             & {} 
\end{matrix}$

 Contrary to (1), we have $E_i=K_i$ for all $i=1,2,3$.  
\end{example}

 \section{Summing up     length 2 extensions characterization}
 We are now able to sum up the results of Sections 3,4 and 5 with respect to the cardinality of $[R,S]$ for an  extension of length 2.
 
\begin{theorem} \label{3.19} A ring extension $R\subset S$ is   of length 2 if and only if one of the following conditions hold: 
\begin{enumerate}

\item  $|\mathrm{Supp}(S/R)|=2,\ \mathrm{Supp}(S/R)\subseteq \mathrm{Max}(R)$ and $|[R,S]|=4$.

\item  $|\mathrm{Supp}(S/R)|=2,\ \mathrm{Supp}(S/R)\not\subseteq \mathrm{Max}(R)$ and $|[R,S]|=3$. Such an extension is  Pr\"ufer.

\item  $R\subset S$ is a non-integral $M$-crucial extension and  $|[R,S]|=3$. Such an extension  satisfies  $\overline R\neq R,S$.

\item $R\subset S$ is an integral $M$-crucial extension such that ${}_S^tR\neq R,S$ and $|[R,S]|=3$. 

\item $R\subset S$ an infra-integral $M$-crucial extension such that ${}_S^+R\neq R,S$ and either  $|[R,S]|=3$ or $(R:S)= M$ with $|[R,S]|=4$. 

\item $R\subset S$ is a subintegral $M$-crucial extension with either $|[R,S]|=3$  or $R\subset S$ is co-pointwise minimal. In this last case, $|[R,S]|=|R/M]+3$.

\item $R\subset S$ is a seminormal infra-integral $M$-crucial extension such that  $|[R,S]|=5$.

\item  $R\subset S$ is a  t-closed integral $M$-crucial extension, so that $M=(R:S)$, and the field extension $R/M\subset S/M$ satisfies one of the following conditions:

(a)  $R/M\subset S/M$ is  radicial and either $|[R,S]|=3$   or  $|[R,S]|=\infty$ with $R\subset S$  co-pointwise minimal. 

 (b)  $R/M\subset S/M$ is neither radicial nor separable, but  exceptional and  $|[R,S]|=3$. 

 (c) $R/M\subset S/M$ is neither radicial nor separable, nor exceptional and $|[R,S]|=4$.

 (d) $R/M\subset S/M$ is a finite separable field extension and $|[R,S]|=t+2$, where $t$ is the number of principal subfields of $S/M$ different from $R/M$.
\end{enumerate} 
\end{theorem}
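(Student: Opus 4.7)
The plan is a systematic case analysis driven by Proposition~\ref{2.3} (which bounds $|\mathrm{Supp}(S/R)| \leq 2$ for any length 2 extension) and the canonical decomposition $R \subseteq {}_S^+R \subseteq {}_S^tR \subseteq \overline R \subseteq S$. The first dichotomy is $|\mathrm{Supp}(S/R)| = 2$ versus $|\mathrm{Supp}(S/R)| = 1$ (the $M$-crucial case); the converse direction in each listed case will then follow from the corresponding implication of the characterization already proved in Sections 3--5.

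For $|\mathrm{Supp}(S/R)| = 2$ I split on whether the support lies in $\mathrm{Max}(R)$: Proposition~\ref{3.1} produces case~(1), while Proposition~\ref{3.2} together with Corollary~\ref{3.23} gives case~(2). For the $M$-crucial situation I first split on integrality of $R \subset S$: Proposition~\ref{3.3} immediately yields case~(3) for non-integral extensions.

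For integral $M$-crucial extensions I invoke the six mutually exclusive cases (a)--(f) of Definition~\ref{4.5}. Proposition~\ref{3.6} handles cases (b) and (d) producing case~(4); Proposition~\ref{3.7} handles case (e) producing case~(5); Proposition~\ref{3.81} handles case (f) subintegral producing case~(6); Proposition~\ref{3.141} handles case (c) seminormal infra-integral producing case~(7). For case (a), $R \subset S$ is t-closed integral, hence seminormal with $M \in \mathrm{Max}(S)$, so Proposition~\ref{1.11} together with \cite[Lemme 3.10]{Pic 1} forces $(R:S) = M$; the resulting poset isomorphism $[R,S] \to [R/M, S/M]$ reduces everything to a field extension of length 2, where Propositions~\ref{3.15}, \ref{3.16} and Theorem~\ref{3.17} yield the subcases 8(a)--(d) according as $R/M \subset S/M$ is radicial, neither-radicial-nor-separable (distinguishing exceptional from not), or separable.

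The main obstacle will be the bookkeeping: verifying that the eight conditions are genuinely mutually exclusive and exhaustive, with the correct cardinalities of $[R,S]$ attached to each. The most delicate boundary lies within case~(5), where Proposition~\ref{3.7} forces the extra requirement $(R:S) = M$ precisely when $|[R,S]| = 4$---a necessity witnessed by Example~\ref{3.131}(4)---and within case~8(d), where the cardinality formula $|[R,S]| = t + 2$ relies on the principal subfield machinery of Section~5. Once these checks are assembled, every length 2 extension falls into exactly one of the eight listed cases, and each case conversely forces length 2 by the proposition cited for it.
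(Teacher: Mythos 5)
Your proposal is correct and follows essentially the same route as the paper, whose proof of Theorem~\ref{3.19} is simply the citation of Propositions~\ref{3.1}, \ref{3.2}, \ref{3.3}, \ref{3.6}, \ref{3.7}, \ref{3.81}, \ref{3.141}, \ref{3.15}, \ref{3.16} and Theorem~\ref{3.17}; you have merely made explicit the organizing case analysis (support cardinality, integrality, the six cases of Definition~\ref{4.5}, and the reduction to $R/M\subset S/M$) that the paper develops across Sections 3--5 before assembling it here.
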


\begin{proof}  Propositions \ref{3.1}, \ref{3.2}, \ref{3.3}, \ref{3.6}, \ref{3.7},  \ref{3.81},  \ref{3.141},  \ref{3.15}, \ref{3.16}, and Theorem  \ref{3.17}.
\end{proof} 

 \begin{remark}\label{7} (1) With the notation of (Theorem~\ref{3.19}), $R\subset S$ is  simple in cases (1)-(5), (8) (b)-(d).
  
(2) In case (8)(d),  $R\subset S$ is t-closed of length 2 with $M=(R:S)$. Let $x\in S$ be such that $S=R[x]$ and let $\overline x$ be the class of $x$ in $S/M$. Let $P(X)=X^n+\sum_{i=0}^{n-1}a_iX^{i}\in R[X]$ be a monic polynomial of least degree such that $P(x)=0$. Then, $\overline P(X):=X^n+\sum_{i=0}^{n-1}\overline a_iX^{i}\in(R/M)[X]$ is a monic polynomial such that $\overline P(\overline x)=0$. Let $\overline f(X)=X^l+\sum_{i=0}^{l-1}\overline b_iX^{i}\in (R/M)[X]$ be the minimal polynomial of $\overline x$. Then, $\overline f$ divides $\overline P$ in $(R/M)[X]$ and $l\leq n$. But $f(\overline x)=\overline x^l+\sum_{i=0}^{l-1}\overline b_i\overline x^{i}=0\in R/M$ gives that $x^l+\sum_{i=0}^{l-1}b_ix^{i}=m\in M$, so that $f(X):=X^l+\sum_{i=0}^{l-1}b_iX^{i}-m\in R[X]$ is a monic polynomial such that $f(x)=0$. Then, $n=l$ by the choice of $n$. It follows that $|[R,S]|=t+2\leq n+1$, where $n=[S/M:R/M]$.

   \end{remark}

  We recall below \cite[Theorem 4.1]{DS} and give a  table showing the link between (Theorem \ref{3.19}) and \cite[Theorem 4.1]{DS}. 
 
 \begin{theorem} \label{6.11} Let $R\subset S$ and $S\subset T$ be minimal extensions, whose crucial maximal ideals are respectively $M$ and $N$. Then $R\subset T$ has FIP if and only if (exactly) one of the following conditions holds:
 
 (i) Both $R\subset S$ and $S\subset T$ are integrally closed.
 
 (ii) $R\subset S$ is integral  and $S\subset T$ is integrally closed.
 
 (iii) $R\subset S$ is  integrally closed,  $S\subset T$ is integral, and $N\cap R\not\subseteq M$.
 
 (iv) Both $R\subset S$ and $S\subset T$ are integral, and $N\cap R\not\subseteq M$.

(v)  Both $R\subset S$ and $S\subset T$ are inert, $N\cap R= M$, and either $R/M$ is finite or there exists $\gamma\in T_M$ such that $T_M=R_M[\gamma]$. 

(vi) $R\subset S$ is decomposed, $S\subset T$ is inert and $N\cap R= M$.

(vii) Both $R\subset S$ and $S\subset T$ are decomposed and $N\cap R= M$.

(viii) $R\subset S$ is  inert, $S\subset T$ is decomposed and $N\cap R= M$.

(ix) $R\subset S$ is  ramified, $S\subset T$ is decomposed and $N\cap R= M$.

(x) $R\subset S$ is  decomposed, $S\subset T$ is ramified and $N\cap R= M$.

(xi) $R\subset S$ is  ramified, $S\subset T$ is inert and $N\cap R= M$.

(xii) $R\subset S$ is  inert, $S\subset T$ is ramified,  $N\cap R= M$, and the two conditions stated in \cite[Proposition 3.5 (a)]{DS} hold.

(xiii) Both $R\subset S$ and $S\subset T$ are ramified,  $N\cap R= M$, and the two conditions stated in \cite[Proposition 3.5 (b)]{DS} hold.
\end{theorem}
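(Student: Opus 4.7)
The plan is to treat the FIP question for the composite $R\subset T$ via the dichotomy between $N\cap R\neq M$ and $N\cap R = M$, reducing in the latter case to the nine combinations of integral minimal types. First I would observe that $R\subset T$ is FMC, so by Proposition~\ref{1.14} the support $\mathrm{Supp}(T/R)\subseteq\{M,\,N\cap R\}$ has cardinality at most $2$, and $R\subset T$ has FIP iff $|[R,T]|<\infty$.

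If $N\cap R\not\subseteq M$, the Crosswise Exchange Lemma~\ref{1.13} produces a unique $S'\in[R,T]$ with $R\subset S'$ minimal of the same type as $S\subset T$ and $S'\subset T$ minimal of the same type as $R\subset S$, yielding $[R,T]=\{R,S,S',T\}$ and hence FIP with $|[R,T]|=4$. This covers (iii) and (iv). Moreover, Lemma~\ref{1.12} shows that a Pr\"ufer minimal and a finite minimal extension from the same base cannot share their crucial ideal; combined with the same Pr\"ufer-versus-integral argument applied after any Crosswise Exchange, this places (i) (two Pr\"ufer minimal with necessarily distinct crucial ideals, $|[R,T]|=3$ or $4$) and (ii) (integral then Pr\"ufer, again with distinct crucial ideals) in this same branch and explains why no auxiliary condition is needed there.

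In the branch $N\cap R = M$, the composite $R\subset T$ is $M$-crucial, and Proposition~\ref{3.11} lets me localize at $M$. Lemma~\ref{1.12} again excludes mixing a Pr\"ufer with a finite step in this branch, leaving the nine integral/integral combinations which I would sort by the canonical tower of Definition~\ref{3.4} and Proposition~\ref{3.5}. Two decomposed is seminormal infra-integral and Proposition~\ref{3.14} gives $|[R,T]|=5$, covering (vii). The six mixed-type pairs (one ramified or one inert combined with one decomposed, or inert/ramified mixes without coincidence) lie in the infra-integral or $t$-closed strata, and Propositions~\ref{3.6} and \ref{3.7} together with Corollary~\ref{3.61} produce explicit finite counts, handling (vi), (viii), (ix), (x), (xi). Two inert lives in the $t$-closed case and reduces by the bijection $[R,T]\cong[R/M,T/M]$ to a separable or radicial degree-two residue field extension, where Proposition~\ref{4.252} applies. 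Two ramified lies in the subintegral stratum and is controlled by Proposition~\ref{3.8} and Corollary~\ref{3.13}.

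The main obstacle is deriving the extra algebraic conditions in (v), (xii), (xiii). For two inerts (v), if $R/M$ is infinite and the residue extension is not simple, the principal-subfield machinery of Section~5 produces infinitely many intermediate subfields, so FIP forces either $|R/M|<\infty$ or $T_M=R_M[\gamma]$. For (xii) inert-then-ramified and (xiii) both ramified, the analysis of Proposition~\ref{3.8} shows that a subintegral or ramified-inert tower with $(R:T)=M$ generically admits a one-parameter family of intermediate subalgebras indexed by $R/M$ (giving the $|R/M|+3$ count), unless the multiplication structure collapses the family; the conditions cited from \cite[Prop.~3.5]{DS} are precisely what rules out this family and so preserve FIP. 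Verifying these three delicate conditions, and checking by mutual disjointness of the thirteen scenarios (via the type dichotomy, the Pr\"ufer/integral split of Lemma~\ref{1.12}, and the crucial-ideal dichotomy) that the list is both exhaustive and non-redundant, is the principal technical hurdle.
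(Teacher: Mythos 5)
A preliminary but important point: the paper does not prove this statement at all. Theorem~\ref{6.11} is quoted verbatim from Dobbs--Shapiro (\cite[Theorem 4.1]{DS}) solely so that the authors can tabulate how their own classification (Theorem~\ref{3.19}) relates to it; the introduction and Section~6 stress that the two results are \emph{not} equivalent. So there is no in-paper proof to match yours against, and your attempt must stand on its own.

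It does not, and the central flaw is structural: you import the length-2 machinery of Sections 3--5 (Propositions~\ref{3.6}, \ref{3.7}, \ref{3.8}, \ref{3.14}, Corollary~\ref{3.61}, the principal-subfield analysis of Section~5) to count $[R,T]$, but a juxtaposition of two minimal extensions need not have length 2 --- the paper says exactly this (``their extensions are not necessarily of length 2 and worse: they may have an infinite length,'' with \cite[Remark 2.9(c)]{DPP2} cited as an instance where $\ell[R,T]=\infty$). Those propositions characterize when $\ell[R,T]=2$ and compute $|[R,T]|$ \emph{under that hypothesis}; they say nothing about FIP for a tower whose length exceeds 2, which is precisely where the content of cases (v), (xii), (xiii) lives (e.g.\ two inert steps give a finite separable residue extension that can easily have length $3$ or more while still, or not, having FIP). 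Relatedly, Lemma~\ref{1.12} concerns two minimal extensions with the \emph{same base} $R$, so it cannot be used as you do to exclude mixed Pr\"ufer/integral towers $R\subset S\subset T$, and your claim that two Pr\"ufer minimal steps force distinct crucial ideals is unsupported. Finally, for the three delicate cases you acknowledge as the ``principal technical hurdle,'' you never state the conditions of \cite[Proposition 3.5]{DS} nor prove their equivalence with FIP; invoking them as ``precisely what rules out this family'' is circular. What remains solid in your sketch is only the easy branch: when $N\cap R\not\subseteq M$, the Crosswise Exchange Lemma~\ref{1.13} does give $[R,T]=\{R,S,S',T\}$ and hence FIP.
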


 There is not a one-to-one correspondence between  the different statements of Theorem \ref{3.19} and \cite[Theorem 4.1]{DS}. Obviously, any extension of length 2 (in Theorem \ref{3.19}) is of the form of some extension of \cite[Theorem 4.1]{DS}, but it has not necessarily FIP (for example if $R\subset S$ is a co-pointwise minimal t-closed extension (Proposition \ref{3.15}), or if $R\subset S$ is a co-pointwise minimal subintegral $M$-crucial extension such that $|R/M|=\infty$ (Theorem \ref{3.19} (b))). Conversely, there exist some extensions $R\subset S\subset T$ (see  \cite[Theorem 4.1 (viii) and (xii)]{DS}) such that $R\subset S$ and $ S\subset T$ are minimal, but such that $\ell[R,T]>2$.  And worse,  $\ell[R,T]=\infty $ in \cite[Remark 2.9(c)]{DPP2}. The following table shows, for each case of Theorem \ref{3.19}, which cases of \cite[Theorem 4.1]{DS} may occur.
 \vskip 0,5cm
 
 \centerline{\begin{tabular} {|c|c|}
 \hline
 Theorem  \ref{3.19} & \cite[Theorem 4.1]{DS} \\
 \hline
 (1) & (i), (ii), (iii), (iv) \\
 (2) & (i) \\
 (3) & (ii) \\
 (4) & (vi), (xi) \\
 (5) & (ix), (x) \\
 (6) & (xiii) \\
 (7) & (vii) \\
 (8) & (v) \\
 \hline
 \end{tabular}}
  \vskip 0,5cm
 
In \cite{D1}, D. Dobbs has characterized extensions $R\subset S$ of length 2 satisfying $|[R,S]|=3$. His results coincide with ours, but 
(Theorem~\ref{3.19}) holds for any extension of length 2, whatever the value of $|[R,S]|$.   In \cite[Remark 2.11]{D1}, he addresses the open problem to know when $|[R,S]|=3$ for an $M$-crucial extension $R\subset S$ which is either t-closed or subintegral. (Theorem~\ref{3.19}) answer  this question in (8) (a), (b) and (d) with $t=1$, for the t-closed case and in (6) for the subintegral case. In both cases, $R\subset S$ is simple.

The following Theorem generalizes the Primitive Element Theorem for ring extensions of length 2.

\begin{theorem} \label{3.21}  A simple ring extension of length 2 has FIP.
\end{theorem}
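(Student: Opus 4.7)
The plan is to reduce the statement to a finite case analysis using the exhaustive classification of length~$2$ extensions in Theorem~\ref{3.19}. For each of its eleven cases, I verify that whenever $R\subset S$ is simple, $|[R,S]|$ is finite.

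In cases (1)--(5), (7), (8)(b) and (8)(c), Theorem~\ref{3.19} already states that $|[R,S]|$ equals one of the explicit finite values $3$, $4$ or $5$, so nothing further is required. For case (8)(d), the extension $R/M\subset S/M$ is finite separable and hence automatically simple (the classical Primitive Element Theorem), and $|[R,S]|=t+2$; since $t=|\Phi(\mathcal F)|$ with $\mathcal F$ the finite set of monic irreducible factors of the minimal polynomial over $L$, the value $t$ is finite. Moreover, by the bijection $[R,S]\to[R/M,S/M]$ obtained from $(R:S)=M$, simplicity of $R\subset S$ is equivalent to simplicity of the residual field extension, and any lift of a primitive element generates $S$ over $R$.

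The only cases in Theorem~\ref{3.19} where $|[R,S]|$ may be infinite are (6) (subintegral $M$-crucial) and (8)(a) (t-closed radicial). In each of these, the theorem splits into two mutually exclusive subcases: a \emph{simple} branch with $|[R,S]|=3$, and a \emph{co-pointwise minimal} branch where $|[R,S]|$ can be infinite (equal to $|R/M|+3$ in case (6) and $\infty$ in case (8)(a)). Therefore the entire proof reduces to the single observation that a co-pointwise minimal extension cannot be simple. This is already contained in the paper: Proposition~\ref{2.4} shows that every co-pointwise minimal integral extension is of the form $S=R[x,y]$ with $\{x,y\}$ a minimal system of two generators, ruling out $S=R[z]$; and Proposition~\ref{3.15} records the analogous dichotomy for finite radicial field extensions, where the co-pointwise minimal case is precisely the non-simple one. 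Combining these cases with the other branches, every simple length~$2$ extension has finite $|[R,S]|$, i.e.\ satisfies FIP.

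The main (essentially only) conceptual point is the non-simplicity of co-pointwise minimal extensions, which is already packaged in Propositions~\ref{2.4} and~\ref{3.15}; the rest is bookkeeping against the classification of Theorem~\ref{3.19}. Since the classification was built piece by piece in Sections~3--5, no new obstacle arises here, and I expect the proof to be short.
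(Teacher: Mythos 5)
Your proof is correct and takes essentially the same route as the paper, whose entire proof of Theorem~\ref{3.21} is ``Use (Theorem~\ref{3.19})''; you have simply spelled out the case check, including the one genuinely needed observation that the potentially infinite branches in cases (6) and (8)(a) are exactly the co-pointwise minimal ones, which are never simple (the mutual exclusivity already recorded in Proposition~\ref{2.2}(1) and visible in Propositions~\ref{2.4} and~\ref{3.15}).
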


\begin{proof} Use (Theorem~\ref{3.19}).
\end{proof} 

 \begin{remark}\label{3.22}   We  emphasize   that when $R\subset S$ is co-pointwise minimal and either $R\subset S$ is a subintegral $M$-crucial extension such that $|R/M|=\infty$, or  $R/M\subset S/M$ is  radicial,   then $R\subset S$ has not FIP  and is not simple. See Theorem ~\ref{3.19} (6) and (8) (a). See also \cite[Example 6.2 (2)]{Pic 7}. 
 \end{remark}

  The next proposition shows that there is some rigidity in extensions of length 2.
   
 \begin{proposition} \label{3.231} Let $R\subset S$ be an integral   extension of length 2 (resp.;  an FMC extension). If $R$ is  integrally closed (in $\mathrm{Tot}(R)$) and not a field, $S$ is not an integral domain.
\end{proposition}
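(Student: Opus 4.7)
The plan is to argue by contradiction: assume $S$ is an integral domain and derive that $R$ must be a field, contradicting the hypothesis.

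If $S$ is a domain then so is $R$ (as a subring), and the assumption that $R$ is integrally closed in $\mathrm{Tot}(R)=\mathrm{Frac}(R)$ says exactly that $R$ is a normal domain. Since $R\subset S$ has length $2$ (or is integral FMC), Theorem~\ref{2.1} (resp., the definition of FMC) supplies some $T\in\,]R,S[$ with $R\subset T$ a minimal integral extension, and $T$ is itself a domain, as a subring of $S$.

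Next I would apply Theorem~\ref{1.3} to $R\subset T$: set $M:=(R:T)\in\mathrm{Max}(R)$, and choose $q\in T\setminus R$ with $T=R[q]$, where $q$ satisfies one of the specific monic polynomial relations furnished by the inert, decomposed or ramified classification. The decisive observation is that $q\notin\mathrm{Frac}(R)$: indeed $q$ is integral over $R$, so if $q$ lay in $\mathrm{Frac}(R)$ the integral closedness of $R$ would give $q\in R$, contradicting $q\in T\setminus R$. It follows that $\{1,q\}$ is $R$-linearly independent inside the domain $T$, hence $Rq\cap R=(0)$.

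To conclude, I would do a short conductor computation, uniform across the three minimal-integral types. The $R$-linear map $R\to T/R$ given by $r\mapsto rq+R$ is injective: if $rq\in R$, then $R$-linear independence of $\{1,q\}$ forces $r=0$. Therefore $T/R$ contains an $R$-submodule isomorphic to $R$, so that
\[
(R:T)=\mathrm{Ann}_R(T/R)\subseteq\mathrm{Ann}_R(R)=(0).
\]
On the other hand Theorem~\ref{1.3} asserts $(R:T)=M\in\mathrm{Max}(R)$, so $M=(0)$, which says $R$ is a field, contradicting the hypothesis.

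The main obstacle I anticipate is simply making the final step uniform over the three minimal-integral types: all I truly need is $T=R[q]$ with a single generator $q\notin R$ (equivalently, $q\notin\mathrm{Frac}(R)$ by normality) together with the equality $(R:T)=M$, and both facts are provided directly by Theorem~\ref{1.3}, so no case-by-case analysis should be necessary.
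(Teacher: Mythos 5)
Your proof is correct, and it takes a genuinely different route from the paper's. Both arguments begin the same way: pick $T\in\,]R,S[$ with $R\subset T$ minimal integral (hence finite and simple, with $(R:T)=M\in\mathrm{Max}(R)$ by Theorem~\ref{1.3}). From there the paper localizes at $M$ and invokes two external results: Seydi's lemma (a finite simple torsion-free extension of an integrally closed domain is free), which makes $R_M\subset T_M$ flat, and then Ferrand--Olivier's Lemme 4.3.1, which forces the local ring $R_M$ to be a field, a contradiction. You instead exploit integral closedness directly on the generator: $q\in T\setminus R$ integral over $R$ cannot lie in $\mathrm{Frac}(R)$, so $\{1,q\}$ is $R$-free in the domain $T$, so $T/R$ contains a copy of $R$ and $(R:T)=\mathrm{Ann}_R(T/R)=(0)$, contradicting $(R:T)=M\in\mathrm{Max}(R)$ with $R$ not a field. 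Your annihilator computation is sound ($rq\in R$ forces $r=0$ by the linear independence, and the conductor annihilates every submodule of $T/R$), and it buys a completely self-contained, elementary argument that needs neither localization nor the two cited lemmas; the paper's version, by contrast, routes the contradiction through flatness, which is the more conceptual but less economical path. The only point worth making explicit in your write-up is the standard fact (used silently by the paper as well) that a minimal integral extension is automatically finite, so that Theorem~\ref{1.3} applies and yields $(R:T)\in\mathrm{Max}(R)$.
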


\begin{proof} It is enough to assume that $R\subset S$ has FMC. Assume that $S$ is an integral domain and let $T\in]R,S[$ so that $R\subset T$ is  minimal. Let $M:=(R:T)\in\mathrm{Max}(R)$. Then  $R_M\subset T_M$ is a minimal finite simple extension which is torsion-free over $R_M$ and with conductor $MR_M$. Seydi Lemma states that  a simple and finite extension  which is torsion-free over an integrally closed domain is free \cite[Corollaire 1.2]{HS}.  Since $R_M$ is integrally closed, $T_M$ is free over $R_M$, so that $R_M\subset T_M$ is flat. But \cite[Lemme 4.3.1]{FO} shows that $R_M$ is field, which is absurd.
\end{proof}

\end{document}